\documentclass[a4paper]{article}
\usepackage[all]{xy}
\usepackage{amsmath, amssymb, amsthm}
\usepackage{latexsym, color}
\usepackage[dvipdfmx, hiresbb]{graphicx}
\usepackage{float}

\newtheorem{thm}{Theorem}[section]
\newtheorem{prop}[thm]{Proposition}
\newtheorem{lem}[thm]{Lemma}
\newtheorem{defn}[thm]{Definition}

\newtheorem{cor}[thm]{Corollary}
\newtheorem{rem}[thm]{Remark}

\title{Twisted Lubin--Tate Big Witt Vectors and Fleck--Sun--Wan Congruences}
\author{
Yutaro Matsuno\\
Department of Mathematics, Waseda University\\
Tokyo, Japan\\
\texttt{nasuuy-0115@ruri.waseda.jp}
}
\date{}

\newcommand{\ds}{\displaystyle}
\newcommand{\p}{\mathfrak{p}}

\newcommand{\ded}{\mathfrak{o}}

\newcommand{\id}{\operatorname{id}}

\begin{document}

\maketitle

\begin{abstract}
We construct a big Witt theory associated with coefficient-multiplicative normalized $\sigma$-twisted Lubin--Tate formal groups.  The resulting $F$-big Witt ring extends Hazewinkel's ramified $q$-typical Witt theory and recovers the usual big Witt ring in the multiplicative case. We also introduce Fleck operators defined from iterated Coleman traces and prove Lubin--Tate analogues of the Fleck--Sun--Wan congruences for both positive and negative powers.  By relating the Lubin--Tate Coleman theory to the $F$-big Witt theory, we obtain a Coleman $F$-norm whose integrality yields congruences among the corresponding Fleck coefficients.
\end{abstract}

\section{Introduction}

Lubin and Tate\cite{LubinTate} developed the theory of formal $\ded_k$-modules associated with a local field $k$, providing a formal-group-theoretic approach to local class field theory. Building on this theory, Coleman\cite{Coleman} introduced trace and norm operators attached to Lubin--Tate formal groups and used them to study norm-compatible systems arising from division points.

Independently, Witt\cite{Witt} introduced the $p$-typical Witt vectors, giving a functorial construction which encodes arithmetic in
mixed characteristic through the ghost components.  Cartier\cite{Cartier} subsequently developed the theory of generalized, or big, Witt vectors and clarified its relation with formal groups and formal curves.

The starting point of the present work was a direct computation of the Coleman norm of the Artin--Hasse curve associated with a $p$-typical Witt vector.

Let $p$ be an odd prime and let
\[
v=\{v_j\}_{j\in\mathbb N}\in W_p(\mathbb Z_p).
\]
Write
\[
g_j=\sum_{r=0}^{j}p^rv_r^{p^{j-r}}~~~(j\in\mathbb N)
\]
for its ghost components.  If
\[
E_p(T)=\exp\left(\sum_{r=0}^{\infty}\frac{T^{p^r}}{p^r}\right)
\]
denotes the Artin--Hasse exponential, then the Artin--Hasse curve associated with $v$ is
\[
f(T)=\prod_{j=0}^{\infty}E_p(v_jT^{p^j})=\exp\left(\sum_{j=0}^{\infty}\frac{g_j}{p^j}T^{p^j}\right)\in 1+T\mathbb Z_p[[T]].
\]

Let $N_{Co}$ denote the Coleman norm associated with the multiplicative Lubin--Tate formal group $\widehat{\mathbb G}_m$ over $\mathbb Q_p$. A direct calculation gives the following formula.

\begin{lem}\label{lem:intro-Coleman}
For every $i\in\mathbb N_{>0}$,
\begin{align*}
(N_{Co}^if)(T)=\exp\Bigg(&-\sum_{j=0}^{i-1}p^{i-j}g_j \\
&+\sum_{j=i}^{\infty}p^{i-j}g_j\sum_{s=0}^{p^{j-i}}(-1)^{p^{j-i}-s}\binom{p^j}{p^is} \\
&+\sum_{t=1}^{\infty}\Bigg\{\sum_{j=i}^{\infty}p^{i-j}g_j\sum_{s=1}^{p^{j-i}}(-1)^{p^{j-i}-s}\binom{p^j}{p^is}\binom{s}{t}\Bigg\}T^t\Bigg).
\end{align*}
\end{lem}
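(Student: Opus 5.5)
The plan is to pass to logarithms, where the Coleman norm becomes a trace. For $\widehat{\mathbb G}_m$ we have $[p](T)=(1+T)^p-1$, and the $p$-torsion points are $\zeta-1$ with $\zeta^p=1$. The norm is characterized by
\[
(N_{Co}f)\bigl((1+T)^p-1\bigr)=\prod_{\zeta^p=1}f\bigl(\zeta(1+T)-1\bigr).
\]
Iterating, and using that the norm is multiplicative and compatible with composition, $N_{Co}^if$ is characterized by
\[
(N_{Co}^if)\bigl((1+T)^{p^i}-1\bigr)=\prod_{\zeta^{p^i}=1}f\bigl(\zeta(1+T)-1\bigr).
\]
I would obtain this identity by induction on $i$, since $\mu_{p^i}$ is fibred over $\mu_{p^{i-1}}$ by $\zeta\mapsto\zeta^p$. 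Set $h(T)=\log f(T)=\sum_{j\ge0}\frac{g_j}{p^j}T^{p^j}\in\mathbb Q_p[[T]]$. Taking logarithms gives
\[
\log\bigl(N_{Co}^if\bigr)\bigl((1+T)^{p^i}-1\bigr)=\sum_{\zeta^{p^i}=1}h\bigl(\zeta(1+T)-1\bigr).
\]
So it suffices to compute the right-hand side termwise in $j$ and to rewrite it as a power series in $(1+T)^{p^i}$.

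The key computation is the torsion trace of a monomial $T^n$ with $n=p^j$. By the binomial theorem and orthogonality of characters $\sum_{\zeta^{p^i}=1}\zeta^m=p^i\cdot[p^i\mid m]$, we get
\[
\sum_{\zeta^{p^i}=1}\bigl(\zeta(1+T)-1\bigr)^{n}=p^i\sum_{0\le s\le n/p^i}\binom{n}{p^is}(-1)^{n-p^is}\bigl((1+T)^{p^i}\bigr)^s .
\]
Substituting $U=(1+T)^{p^i}-1$ and renaming $U$ as $T$ shows that $\log N_{Co}^if$ is
\[
\sum_j p^{i-j}g_j\sum_s(-1)^{p^j-p^is}\binom{p^j}{p^is}(1+T)^s.
\]
Since $p$ is odd, $p^j-p^is\equiv 1-s\equiv p^{j-i}-s \pmod 2$, which matches the sign in the statement.

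Next I split according to $j<i$ and $j\ge i$. If $j<i$, the condition $p^is\le p^j$ forces $s=0$, giving the constant contribution $-p^{i-j}g_j$. If $j\ge i$, then $s$ ranges over $0,\dots,p^{j-i}$. Expanding $(1+T)^s=\sum_t\binom{s}{t}T^t$ produces both the constant term (the sum over all $s$, including $s=0$) and the coefficient of $T^t$ for $t\ge1$ (only $s\ge1$ contributes, since $\binom{0}{t}=0$ for $t\ge 1$). This yields exactly the three displayed lines.

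The main obstacle I expect is justifying the rearrangements. This means exchanging the infinite sum over $j$ with the finite sums over $s,t$, and checking that the constant term and each $T^t$-coefficient converge $p$-adically. For this I would use $v_p\binom{p^j}{p^is}=j-i-v_p(s)$ for $1\le s\le p^{j-i}$, so that $p^{i-j}\binom{p^j}{p^is}$ has valuation $-v_p(s)\ge -(j-i)$. Combined with $g_j\equiv g_{j-1}\pmod{p^j}$-type estimates, or more simply with the identity in the $\mathbb Q_p$-algebra of functions evaluated via the integral series $f$, this should give convergence. Failing a clean estimate, I would argue instead that both sides are the logarithm of the power series $N_{Co}^if\in 1+T\mathbb Z_p[[T]]$, so the identity holds formally coefficientwise once each coefficient is known to converge.
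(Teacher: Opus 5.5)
Your proposal follows the paper's proof essentially step by step: describe $N_{Co}^if$ through the product over $\widehat{\mathbb G}_m[p^i]$, take logarithms, use the binomial theorem and orthogonality of $p^i$-th roots of unity to get $-p^i$ for $j<i$ and $p^i\sum_s(-1)^{p^{j-i}-s}\binom{p^j}{p^is}(1+T)^{p^is}$ for $j\ge i$, then re-expand $(1+T)^{p^is}=(1+U)^s$ with $U=(1+T)^{p^i}-1$.

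The one weak spot is your convergence remark. Termwise bounds cannot work: the term with $s=p^{j-i}$ has valuation $-(j-i)+v_p(g_j)$, and $g_j$ need not tend to $0$ (e.g.\ $g_j=1$ for $v=(1,0,0,\ldots)$). Convergence is nevertheless automatic. For each $t$, the $j$-series of $T^t$-coefficients of $\frac{g_j}{p^j}\sum_{\zeta}\bigl(\zeta(1+T)-1\bigr)^{p^j}$ converges because $v_p(\zeta-1)>0$ for $\zeta\neq1$. Passing to $U$-coefficients only takes fixed, $j$-independent finite linear combinations of these coefficients. This is exactly what the paper's appeal to the compositional invertibility of $[p^i]_{\widehat{\mathbb G}_m}$ over $\mathbb Q_p$ supplies.
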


\begin{proof}
Since $\widehat{\mathbb G}_m[p^i]=\{\zeta_{p^i}^a-1\mid 0\le a<p^i\}$, we have
\begin{align*}
(N_{Co}^if)((1+T)^{p^i}-1)&=\prod_{a=0}^{p^i-1}f\bigl(\zeta_{p^i}^a(1+T)-1\bigr) \\
&=\exp\left(\sum_{j=0}^{\infty}\frac{g_j}{p^j}\sum_{a=0}^{p^i-1}\bigl(\zeta_{p^i}^a(1+T)-1\bigr)^{p^j}\right).
\end{align*}
Expanding the inner power and using
\[
\sum_{a=0}^{p^i-1}\zeta_{p^i}^{a\ell}=\begin{cases}p^i & (p^i\mid\ell),\\ 0 & (p^i\nmid\ell),\end{cases}
\]
we obtain
\[
\sum_{a=0}^{p^i-1}\bigl(\zeta_{p^i}^a(1+T)-1\bigr)^{p^j}=\begin{cases}-p^i & (j<i), \\[6pt] \ds p^i\sum_{s=0}^{p^{j-i}}(-1)^{p^{j-i}-s}\binom{p^j}{p^is}(1+T)^{p^is} & (j\ge i). \end{cases}
\]
Here we have used the fact that $p$ is odd.  Consequently,
\begin{align*}
&(N_{Co}^if)((1+T)^{p^i}-1) \\
=&\exp\Bigg(-\sum_{j=0}^{i-1}p^{i-j}g_j+\sum_{j=i}^{\infty}p^{i-j}g_j\sum_{s=0}^{p^{j-i}}(-1)^{p^{j-i}-s}\binom{p^j}{p^is}(1+T)^{p^is}\Bigg).
\end{align*}
Now
\[
(1+T)^{p^is}=\left(1+\bigl((1+T)^{p^i}-1\bigr)\right)^s=\sum_{t=0}^{s}\binom{s}{t}\bigl((1+T)^{p^i}-1\bigr)^t.
\]
Collecting the coefficients of $\bigl((1+T)^{p^i}-1\bigr)^t$ and using the fact that
\[
[p^i]_{\widehat{\mathbb G}_m}(T)=(1+T)^{p^i}-1
\]
is invertible under composition over $\mathbb Q_p$, we obtain the asserted formula.
\end{proof}

For $i,t\in\mathbb N_{>0}$, denote the coefficient of $T^t$ in $\ds \log\frac{(N_{Co}^if)(T)}{(N_{Co}^if)(0)}$ by $A_{i,t}(v)$. By Lemma~\ref{lem:intro-Coleman}, it is given by
\[
A_{i,t}(v)=\sum_{j=i}^{\infty}p^{i-j}g_j\sum_{s=1}^{p^{j-i}}(-1)^{p^{j-i}-s}\binom{p^j}{p^is}\binom{s}{t}.
\]

The contraction property of the Coleman norm already gives a nontrivial congruence among these coefficients.

\begin{cor}\label{cor:intro-contraction}
For every $i,t\in\mathbb N_{>0}$,
\[
A_{i+1,t}(v)\equiv A_{i,t}(v)\pmod{p^{i+1}\mathbb Z_p}.
\]
In particular, for every fixed $t\in\mathbb N_{>0}$, the sequence
\[
\{A_{i,t}(v)\}_{i\ge1}
\]
is Cauchy in $\mathbb Q_p$ and hence converges $p$-adically.
\end{cor}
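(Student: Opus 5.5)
The plan is to obtain the congruence from the standard contraction property of Coleman's norm operator, rather than from the explicit binomial formula of Lemma~\ref{lem:intro-Coleman}. The facts about $N_{Co}$ for $\widehat{\mathbb G}_m$ over $\mathbb Q_p$ that I will invoke from Coleman~\cite{Coleman} are the following. $N_{Co}$ is multiplicative on $\mathbb Z_p[[T]]^\times$ and maps this group to itself. Since the Frobenius on the coefficient ring $\mathbb Z_p$ is trivial, $N_{Co}h\equiv h\pmod{p}$ for every $h\in\mathbb Z_p[[T]]^\times$. Finally, for $k\ge1$, if $h\equiv1\pmod{p^k}$ then $N_{Co}h\equiv1\pmod{p^{k+1}}$. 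Since $f\in1+T\mathbb Z_p[[T]]$, all of $N_{Co}^if$ lie in $\mathbb Z_p[[T]]^\times$.

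First I set $u=N_{Co}f/f$. This is a unit of $\mathbb Z_p[[T]]$ with $u\equiv1\pmod p$. By multiplicativity,
\[
h_i:=\frac{N_{Co}^{i+1}f}{N_{Co}^if}=N_{Co}^i u .
\]
Applying the contraction property $i$ times gives $h_i\equiv1\pmod{p^{i+1}}$, coefficientwise in $\mathbb Z_p[[T]]$. In particular $h_i(0)\equiv1\pmod{p^{i+1}}$, and since $h_i(0)$ is a unit, also
\[
\tilde h_i:=\frac{h_i}{h_i(0)}\in1+p^{i+1}T\mathbb Z_p[[T]].
\]

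Next I pass to logarithms. By definition of $A_{i,t}$, the difference $A_{i+1,t}(v)-A_{i,t}(v)$ is exactly the coefficient of $T^t$ in
\[
\log\frac{N_{Co}^{i+1}f/(N_{Co}^{i+1}f)(0)}{N_{Co}^if/(N_{Co}^if)(0)}=\log\tilde h_i .
\]
Write $\tilde h_i=1+p^{i+1}w$ with $w\in T\mathbb Z_p[[T]]$. Then
\[
\log\tilde h_i=\sum_{n\ge1}(-1)^{n-1}\frac{p^{(i+1)n}}{n}w^n .
\]
It remains to check that each coefficient $p^{(i+1)n}/n$ has valuation at least $i+1$, i.e.\ that $(i+1)(n-1)\ge v_p(n)$. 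This holds because $v_p(n)\le n-1$ for every $n\ge1$. Moreover, only finitely many $n$ contribute to a given power $T^t$, since $w$ has no constant term. Hence every coefficient of $\log\tilde h_i$ lies in $p^{i+1}\mathbb Z_p$, which gives the congruence.

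The Cauchy statement is then immediate: the consecutive differences tend to $0$ $p$-adically, and $\mathbb Q_p$ is complete.

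The main point needing care is the contraction step, namely that $h\equiv1\pmod{p^k}$ implies $N_{Co}h\equiv1\pmod{p^{k+1}}$. If I have to justify it directly, I will use the characterization $(N_{Co}h)((1+T)^p-1)=\prod_{\zeta^p=1}h(\zeta(1+T)-1)$. Writing $h=1+p^kg$, the product equals $1+p^k\sum_\zeta g(\zeta(1+T)-1)+O(p^{2k})$. The trace sum $\sum_\zeta g(\zeta(1+T)-1)$ is divisible by $p$ in $\mathbb Z_p[[T]]$, by the same root-of-unity orthogonality used in the proof of Lemma~\ref{lem:intro-Coleman}. The remaining cross terms carry extra factors of $p^k$, possibly divided by elements of $\mathbb Z_p[\zeta_p]$ of small valuation, and they still land in $p^{k+1}$ after descending along the invertible map $[p]_{\widehat{\mathbb G}_m}$.
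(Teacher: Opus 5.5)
Your proof is correct and follows the paper's argument: the contraction lemma for the Coleman norm gives $\widetilde N_{Co}^{i+1}(f)/\widetilde N_{Co}^{i}(f)\in 1+p^{i+1}T\mathbb Z_p[[T]]$, and then you apply the $p$-adic logarithm and compare $T^t$-coefficients. You additionally spell out two steps the paper leaves implicit, namely the reduction $N_{Co}^{i+1}f/N_{Co}^if=N_{Co}^i(N_{Co}f/f)$ and the estimate $v_p(n)\le n-1$. One caution about your optional direct proof of the contraction step: $[p]_{\widehat{\mathbb G}_m}$ is compositionally invertible only over $\mathbb Q_p$. The descent should instead use that $x\mapsto x\bigl((1+T)^p-1\bigr)$ is injective modulo $p$ on $\mathbb Z_p[[T]]$ (it reduces to $x(T^p)$), and the higher symmetric terms are already integral multiples of $p^{2k}$, so no division is involved.
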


\begin{proof}
Put
\[
\widetilde N_{Co}^i(f)=\frac{N_{Co}^i(f)}{(N_{Co}^i(f))(0)}.
\]
By the contraction lemma for the Coleman norm,
\[
\frac{\widetilde N_{Co}^{i+1}(f)}{\widetilde N_{Co}^{i}(f)}\in 1+p^{i+1}T\mathbb Z_p[[T]].
\]
Since $p$ is odd, the $p$-adic logarithm gives
\[
\log\widetilde N_{Co}^{i+1}(f)-\log\widetilde N_{Co}^{i}(f)\in p^{i+1}T\mathbb Z_p[[T]].
\]
By Lemma~\ref{lem:intro-Coleman},
\[
\log\widetilde N_{Co}^{i}(f)=\sum_{t=1}^{\infty}A_{i,t}(v)T^t.
\]
Comparing coefficients of $T^t$ yields
\[
A_{i+1,t}(v)-A_{i,t}(v)\in p^{i+1}\mathbb Z_p.
\]
The convergence follows immediately.
\end{proof}

The finite sums occurring in the coefficients $A_{i,t}(v)$ are polynomial extensions of the classical Fleck sums.  More precisely, for $j\ge i$ put
\[
C_{i,t}(j)=\sum_{s=0}^{p^{j-i}}(-1)^{p^{j-i}-s}\binom{p^j}{p^is}\binom{s}{t}.
\]
Then
\[
A_{i,t}(v)=\sum_{j=i}^{\infty}p^{i-j}g_jC_{i,t}(j)~~~(t\ge1).
\]
Since $p$ is odd,
\[
C_{i,t}(j)=(-1)^{p^j}\sum_{\substack{ 0\le a\le p^j\\p^i\mid a}}(-1)^a\binom{p^j}{a}\binom{a/p^i}{t}.
\]
$C_{i,t}(j)$ is called a polynomially weighted Fleck sum.

Fleck's classical congruence\cite{Fleck} was generalized to prime-power moduli by Weisman\cite{Weisman}.  Polynomial extensions were subsequently studied by Sun\cite{Sun}, and Wan\cite{Wan} obtained sharp estimates by means of the $\psi$-operator in the cyclotomic setting.

Throughout the following theorem, for $x\in\mathbb R$ we write
\[
\lfloor x\rfloor_+=\max\{0,\lfloor x\rfloor\}.
\]

\begin{thm}[Wan\cite{Wan}, Theorems 1.3 and 1.4]\label{thm:intro-Wan}
Let $i\in\mathbb N_{>0}$.

\begin{enumerate}
\item[\textnormal{(i)}]
For $n\in\mathbb N_{>0}$, $t\in\mathbb N$, and $r\in\mathbb Z$,
\[
v_p\left(\sum_{\substack{0\le k\le n\\ k\equiv r\pmod{p^i}}}(-1)^{n-k}\binom nk\binom{(k-r)/p^i}{t}\right)\ge\left\lfloor\frac{n-p^{i-1}-t p^i}{p^{i-1}(p-1)}\right\rfloor_+.
\]

\item[\textnormal{(ii)}]
For $n\in\mathbb N_{>0}$, $t\in\mathbb N$, and $r\in\mathbb Z$,
\begin{align*}
v_p\Bigg(&\sum_{\substack{s_0,\ldots,s_{p^i-1}\ge0\\ s_0+\cdots+s_{p^i-1}=n\\ s_1+2s_2+\cdots+(p^i-1)s_{p^i-1}\equiv r\pmod{p^i}}} \\
&\binom{n}{s_0,\ldots,s_{p^i-1}}\binom{(s_1+2s_2+\cdots+(p^i-1)s_{p^i-1}-r)/p^i}{t}\Bigg) \\
&\qquad\qquad\qquad\qquad\ge\left\lfloor\frac{(in-i+1)(p-1)-t(ip-i+1)-1}{p-1}\right\rfloor_+.
\end{align*}
\end{enumerate}
\end{thm}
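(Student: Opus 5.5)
We prove both parts by Wan's cyclotomic $\psi$-operator method: write the sum as a $\psi$-type trace over the $p^i$-th roots of unity, then read off the $p$-adic valuation from the $\pi$-adic expansion in the cyclotomic ring. Let $\zeta=\zeta_{p^i}$, put $K=\mathbb Q_p(\zeta)$, and let $\pi$ be a uniformizer of $K$ (for instance $\pi=\zeta_p-1$ in $\mathbb Q_p(\zeta_p)$ combined with the tower). For a polynomial $P(x)=\sum_k c_kx^k$ define
\[
(\psi_rP)(y)=\frac1{p^i}\sum_{a=0}^{p^i-1}\zeta^{-ar}P(\zeta^a x)\Big|_{x^{p^i}=y},
\]
which picks out the monomials $x^k$ with $k\equiv r \pmod{p^i}$ and sends $x^k\mapsto y^{(k-r)/p^i}$ after dividing by $x^r$.

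For part (i) we take $P(x)=(x-1)^n$. The sum in (i) is then the $t$-th Taylor coefficient at $y=1$ of $x^{-r}\psi_r$ applied to $(x-1)^n$; equivalently, it is $\frac{1}{t!}\frac{d^t}{dy^t}$ evaluated at $y=1$. Substituting $x=1+T$ gives
\[
\sum_a \zeta^{-ar}(\zeta^a(1+T)-1)^n ,
\]
and each factor satisfies $\zeta^a(1+T)-1=(\zeta^a-1)+\zeta^aT$. We then expand in powers of $T$ and re-expand in terms of $S=(1+T)^{p^i}-1$, exactly as in the proof of Lemma~\ref{lem:intro-Coleman}; this expresses the sum as the coefficient of $S^t$. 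The key estimate is that the coefficient of $S^t$ in $\sum_a\zeta^{-ar}(\zeta^a(1+T)-1)^n$ has $\pi$-adic valuation at least about $n-tp^i$ in the normalized sense. The reason is that $(1+T)^{p^i}-1$ behaves like $T^{p^i}$ modulo $\pi$ (it is a Lubin--Tate endomorphism), and $(\zeta^a-1)$ has valuation $\ge 1/(p^{i-1}(p-1))$ for $a\ne0$. After dividing by $p^i$ and accounting for the $a=0$ term, this yields $v_p\ge (n-p^{i-1}-tp^i)/(p^{i-1}(p-1))$. Integrality then gives the floor, and the $\lfloor\cdot\rfloor_+$ is trivial when the quantity is negative.

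For part (ii) we run the same scheme with $P(x)=(1+x+\cdots+x^{p^i-1})^n$, whose multinomial expansion gives exactly the indexed sum. The factor $1+\zeta^a x+\dots$ at $x=1+T$ equals $\frac{(\zeta^a(1+T))^{p^i}-1}{\zeta^a(1+T)-1}=\frac{(1+T)^{p^i}-1}{\zeta^a(1+T)-1}$. Its valuation analysis uses the filtration by $a$: we write $a=p^{m}u$ with $u$ a unit, so that $\zeta^a$ is a primitive $p^{i-m}$-th root of unity. A term-by-term lower bound, summed over the $i$ levels $m$, produces the linear-in-$i$ expressions $(in-i+1)(p-1)$ and $t(ip-i+1)$.

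The main obstacle is the sharp bookkeeping. We must track how passing from the $T$-expansion to the $S$-expansion loses valuation, which costs $tp^i$ in (i) and $t(ip-i+1)$ in (ii). We also need the Galois-trace step to recover the exact correction terms $p^{i-1}$ and $-i+1$ rather than weaker ones. For this we would first prove a lemma: if $G(T)\in\ded_K[[T]]$ has $T^m$-coefficients of valuation $\ge\lambda(m)$ with $\lambda$ decreasing linearly, then its $S$-coefficients satisfy an explicit bound. We would apply this with the lower convex hull of the valuations, following Wan.
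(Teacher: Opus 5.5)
Note first that the paper does not prove this statement; it quotes Wan. Its own argument, the proof of Theorem~\ref{general-FSW}, iterates the \emph{level-one} Coleman trace, as Wan does. Your proposal instead works with a single average over all $p^i$-th roots of unity, and its central estimate is never established.

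In (i), the reason you give is term-by-term, but the individual terms $(\zeta^a(1+T)-1)^n$ are not series in $S=(1+T)^{p^i}-1$. Decompose them in the $\mathbb Z_p[\zeta][[S]]$-basis $\{(1+T)^j\}_{0\le j<p^i}$. The substitution $1+T\mapsto\zeta^a(1+T)$ fixes $S$ and multiplies the $j$-th component by $\zeta^{aj}$. So after the twist by $\zeta^{-ar}$, the part surviving the average is the same for every $a$, namely the corresponding component of $T^n$: the valuations of $\zeta^a-1$ drop out and you are back to the original problem.

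Even granting a bound $v_p\ge (n-tp^i)/(p^{i-1}(p-1))$ before averaging, dividing by $p^i$ gives $(n-tp^i)/(p^{i-1}(p-1))-i$. This misses the target by $i-\frac{1}{p-1}$. That deficit can only come from cancellation in the sum over $a$, via the different of $\mathbb Q_p(\zeta_{p^j})/\mathbb Q_p$ at every level $j\le i$; you name this but do not carry it out.

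The only place $v_p(\zeta^a-1)$ can genuinely be used is on the $T$-coefficients of the whole average. These equal $\binom nk\sum_{l\equiv r-k}(-1)^{n-k-l}\binom{n-k}{l}$, so you would first need Weisman's theorem, and then your unproved conversion lemma from $T$- to $S$-coefficients. That lemma needs $v_p\binom{p^i}{k}=i-v_p(k)$, i.e.\ the Newton polygon of $S$ with last slope $1/(p^{i-1}(p-1))$. The congruence $S\equiv T^{p^i}$ you cite is far too coarse to supply it.

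For (ii) there is no argument, and the proposed mechanism is not the right one. The expressions $(in-i+1)(p-1)$ and $t(ip-i+1)$ come from composing $i$ copies of a one-level affine estimate, not from summing over the levels $a=p^mu$ of one average.

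The missing idea is to write the level-$p^i$ average as $\psi^i$, with $\psi=\operatorname{Tr}_{Co}$ for $\widehat{\mathbb G}_m$. Your sums are then the coefficient of $S^t$ in $\psi^i\bigl((1+T)^{-r}T^n\bigr)$ and of $S^{t-n}$ in $\psi^i\bigl((1+T)^{-r}T^{-n}\bigr)$. What remains is to prove one-step inclusions for $T^{\pm n}g$ with arbitrary $g\in\mathbb Z_p[[T]]$. The identity
\[
\psi(T^ng)=T\,\psi(T^{n-p}g)-p\,\psi\bigl(T^{n-p+1}hg\bigr),\qquad h=\frac{(1+T)^p-1-T^p}{pT}\in\mathbb Z_p[T],
\]
gives by induction $\psi(T^ng)\in\sum_{j}T^jp^{\lfloor(n-1-jp)/(p-1)\rfloor}\mathbb Z_p[[T]]$. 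For negative powers, $\psi(T^{-n}g)=T^{-n}\psi\bigl((((1+T)^p-1)/T)^ng\bigr)$ with $((1+T)^p-1)/T\in(p,T^{p-1})$ gives the analogue. Iterating $i$ times yields both bounds. Since $(1+T)^{-r}$ is a unit of $\mathbb Z_p[[T]]$, arbitrary $r$ costs nothing. This is exactly the proof of Theorem~\ref{general-FSW} specialized to $F=\widehat{\mathbb G}_m$.
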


Applying part~(i) with
\[
n=p^j,\qquad r=0
\]
we obtain
\[
v_p\bigl(C_{i,t}(j)\bigr)\ge\left\lfloor\frac{p^j-p^{i-1}-tp^i}{p^{i-1}(p-1)}\right\rfloor_+.
\]
Part~(ii) is the corresponding estimate arising from negative powers. Both types of sums will reappear below as specializations to $F=\widehat{\mathbb G}_m$ of the Fleck operators associated with $T^n$ and $T^{-n}$, respectively.

On the other hand, Hazewinkel\cite{Hazewinkel} constructed ramified $q$-typical Witt vectors associated with $\sigma$-twisted Lubin--Tate formal groups. He proved the integrality of the corresponding Witt addition and multiplication polynomials, thereby obtaining the ramified $q$-typical Witt functor.

The preceding results suggest that the relation observed in Lemma~\ref{lem:intro-Coleman} should admit a Lubin--Tate generalization on both sides.  Indeed, the formula involves two different structures: the $p$-typical Witt ghost components and polynomially weighted Fleck sums.  Hazewinkel's theory provides a Lubin--Tate generalization of the former, while the results of Sun and Wan suggest a corresponding generalization of the latter.

The purpose of the present paper is to generalize this relation. We first extend Hazewinkel's ramified $q$-typical Witt theory to a big Witt theory associated with a suitable $\sigma$-twisted Lubin--Tate formal group.  We then introduce Lubin--Tate analogues of the polynomial Fleck sums and prove analogues of the Fleck--Sun--Wan congruences.  Finally, we construct a Coleman $F$-norm on the resulting big Witt ring and show that its action on ghost components is governed by these generalized Fleck sums.

We now state the main results of this paper.

Our first main result concerns the Witt-theoretic side of Lemma~\ref{lem:intro-Coleman}.

\begin{thm}[Construction of the $F$-big Witt ring]\label{thm:intro-F-big-Witt}
Let $F$ be a coefficient-multiplicative normalized $\sigma$-twisted Lubin--Tate formal group. Then there exists a functor
\[
B\longmapsto W^F(B)
\]
from $R$-algebras to commutative $R$-algebras, equipped with natural analogues of the ghost map, Teichm\"uller map, Verschiebung, and Frobenius. It also admits a natural realization in terms of Lubin--Tate curves.

Moreover, for $F=\widehat{\mathbb G}_m$ over $\mathbb Q_p$, this construction recovers the usual big Witt ring, and its $q$-typical part recovers Hazewinkel's ramified $q$-typical Witt ring.
\end{thm}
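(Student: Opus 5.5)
The construction follows the standard ghost-component route: I will define the ghost map, prove integrality of the structure polynomials by a Dwork-type criterion, and then transport the ring structure back to Witt coordinates.

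\textbf{Setup.} Write $\ded$ for the ring of integers of the local field, $\pi$ for a uniformizer, and $q$ for the residue cardinality. Let $R\supseteq\ded$ carry a Frobenius lift $\sigma$ with $\sigma(a)\equiv a^q \pmod{\pi}$. Let $F$ have logarithm $f_F(T)=\sum_{m\ge0} b_m T^{q^m}$, where the normalization gives $b_0=1$ and the twisted Lubin--Tate functional equation holds:
\[
\pi b_m=\sigma(b_{m-1})+\cdots .
\]
I will read \emph{coefficient-multiplicative} as saying that $b_m$ depends multiplicatively on the exponent. This lets me extend the coefficients to all $n\ge1$ via $n=q^m n'$, giving big coefficients $\beta_n$.

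\textbf{Ghost map.} For $x=(x_n)_{n\ge1}$ I will define
\[
w_n(x)=\sum_{d\mid n} d\,\beta_{n/d}\,\sigma_{*}^{\,n/d}\bigl(x_d^{\,n/d}\bigr),
\]
with the twist $\sigma$ inserted by the rule that makes the multiplicative case reduce to $\sum_{d\mid n} d\,x_d^{n/d}$. Equivalently, $w$ is the logarithmic derivative of the Lubin--Tate curve
\[
\prod_d \exp_F\bigl(\text{Artin--Hasse-type series in } x_d T^d\bigr).
\]
I will set this up first, because the curve realization is the cleanest way to see that $w$ is injective when $R$ is $\pi$-torsion-free.

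\textbf{Integrality.} This is the heart of the proof. Following the functional-equation lemma of Hazewinkel, I aim to show that $(g_n)$ lies in the image of $w$ if and only if, for every prime-power step $n\mapsto n/q$ along the $q$-part, the ghost components satisfy
\[
g_n\equiv \sigma\bigl(g_{n/q}\bigr)\pmod{\pi^{v_q(n)}}.
\]
I then need to verify that sums and products of ghost vectors satisfying these congruences again satisfy them. The congruences at primes $\ell\ne p$ are routine: they reduce to the classical big Witt case, since $\ell$ is a unit in $\ded$. So the real content is the $q$-typical part. There I will reduce to Hazewinkel's integrality theorem via the decomposition of $n$ into its $q$-power part and prime-to-$p$ part, and use coefficient-multiplicativity to show the factors separate.

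Once integrality is known, the ring structure, the Teichmüller map $a\mapsto(a,0,0,\dots)$, the Verschiebung $V_m$, and the Frobenius $F_m$ are defined by their ghost formulas on torsion-free universal rings. Their integrality follows from the same criterion, and I will then descend to arbitrary $R$-algebras by functoriality.

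\textbf{Comparisons.} For $F=\widehat{\mathbb G}_m$ over $\mathbb Q_p$ we have $\beta_n=1/n$ up to normalization and $\sigma=\id$, so $w_n$ becomes the classical ghost map and the construction recovers the usual big Witt ring. Restricting to indices $n=q^m$ recovers Hazewinkel's ramified $q$-typical ghost map, so the $q$-typical quotient of $W^F$ is his ramified $q$-typical Witt ring.

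\textbf{Main obstacle.} I expect the hardest step to be the mixed-index integrality, that is, checking that the twisted congruences are preserved under multiplication when $n$ has both a $q$-part and a prime-to-$p$ part. This is exactly where coefficient-multiplicativity must be used. My first attempt will be to prove a Dwork-style lemma for $F$-curves, $\Phi(T)\in\exp_F$-integral, by factoring through Hazewinkel's lemma.
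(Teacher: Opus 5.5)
Your overall plan (define a ghost map, then prove integrality with a Dwork-type congruence criterion on $\pi$-torsion-free universal rings) can be made to work, but the central object is mis-specified and this propagates.

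\textbf{The ghost map.} Your ghost map applies $\sigma$ to the Witt coordinates $x_d$. But $W^F$ is a functor on arbitrary $R$-algebras $B$, and these carry no $\sigma$. Separately, the coefficient $d\,\beta_{n/d}$ gives $d^2/n$ rather than $d$ for $\widehat{\mathbb G}_m$. The correct ghost polynomials are $W_n^F=\sum_{d\mid_N n}\frac{c_n}{c_{n/d}}V_d^{n/d}$, with untwisted variables. The twist enters only through the coefficients, via $c_{qn}=\pi\sigma(c_n)$.

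\textbf{The hypothesis.} The error comes from misreading the class of formal groups. $\log_F$ is not $q$-typical; it is $\sum_{n\in N}(-1)^{n-1}T^n/c_n$ with $N=\langle p^{f_F}\rangle\times N_{(p)}$. Here $f_F$ may be a proper divisor of $f_\sigma$ (this is where the units $\nu_j$ appear), and $N_{(p)}$ may be a proper monus-closed submonoid. Consequently divisor sums run over $N$ only, and $W^F(B)$ must be assembled sectorwise from $\mathbb N_{>0}=\coprod_{r\in\mathcal R}rN$. Your bookkeeping $n=q^mn'$ does not see this, and neither does your Teichm\"uller map $a\mapsto(a,0,0,\dots)$.

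\textbf{The reduction to Hazewinkel.} For the same reason, separating the factors and invoking Hazewinkel fails in general. The splitting $W_N^F\cong\prod_{m\in N_{(q)}}W_{\langle q\rangle}^{F_q}$ is only available when $f_{\sigma/F}=1$, since the relation $f_b\circ v_b=[c_b]$ requires $b\in\langle q\rangle\times N_{(p)}$. Moreover, the paper deduces this splitting from the ring structure, so using it to build that structure would be circular.

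\textbf{The curve description.} This is inconsistent with your ghost map. The ghost components are the coefficients of $\Theta^F\log_F$ of the Euler-product curve $\sum_n{}^F(-_F)(-v_nT^n)$; this is a logarithmic derivative only when $c_n=n$. By contrast, $F$-sums of Artin--Hasse curves in $x_dT^d$ only produce terms with $q$-power cofactors $n/d$.

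\textbf{The misplaced difficulty.} With the correct ghost polynomials, your congruence $g_n\equiv\sigma(g_{n/q})\pmod{\pi^{v_q(n)}}$ is exactly the right criterion. It applies on a $\pi$-torsion-free $R$-algebra with a $q$-Frobenius lift extending $\sigma$, for example $R[\mathbb V]$ with $V\mapsto V^q$. Proving this criterion is the real content. It uses:
\begin{itemize}
\item $c_n\in\pi^{v_q(n)}R^\ast$;
\item $c_{qn}/c_{qn/d}=\sigma(c_n/c_{n/d})$ for $d\mid_N n$;
\item part (ii) of Lemma~\ref{FEL};
\item the fact that $d\mid_N qn$ with $d\nmid n$ forces $v_q(qn/d)=0$.
\end{itemize}
Once this is in place, closure under products (your ``main obstacle'') is immediate, because $\sigma$ is a ring map. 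A similar short check handles $[\alpha]^F$ and Frobenius.

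\textbf{Comparison with the paper.} Repaired this way, your route genuinely differs from the paper's, which proves integrality on the curve side. There, addition is $+_F$ of Euler-product curves, so $S_n^F$ and $I_n^F$ are integral because the representation polynomials $G_n^F$ are. Products, Frobenius and $[\alpha]^F$ are written as $F$-sums of generalized Artin--Hasse curves, which are integral by Lemma~\ref{FEL}(iv). Their coefficients have the form $c_{abk}/(c_{ak}c_{bk})\in R$ with $(a,b)=1$ and $k\in N_{(q)}$. This is where coefficient-multiplicativity actually enters, through $c_{q^in}=\Pi_i\sigma^i(c_n)$. The paper's route yields the curve realization $e^F$ and the explicit Cartier-type formulas (reused later for the Coleman norm) at no extra cost. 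The Dwork route is shorter for integrality, but it must still establish $e^F$ separately via $\Theta^F\log_F\bigl(\sum_n{}^F(-_F)(-V_nT^n)\bigr)=\sum_nW_n^F(\mathbb V)T^n$.
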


A precise formulation of Theorem~\ref{thm:intro-F-big-Witt} will be given in Theorem~\ref{thm:F-big-Witt} below.

We next turn to the Fleck-theoretic side of Lemma~\ref{lem:intro-Coleman}. Let $K$ be a local field with residue field of cardinality $q$, and let $F$ be a Lubin--Tate formal $\mathfrak o_K$-module. If $\operatorname{Tr}_{Co}$ denotes the Coleman trace, put
\[
\mathfrak F_{s,m}^F(g)=(\operatorname{the~coefficient~of~}T^m\text{~in~}\operatorname{Tr}_{Co}^s(g)).
\]
We shall later refer to the operator $\mathfrak{F}_{s,m}^F$ as the Fleck operator.

Our second main result is the following Lubin--Tate analogue of the Fleck--Sun--Wan congruences.

\begin{thm}\label{general-FSW}
Let $s,n\in\mathbb N_{>0}$.
\begin{enumerate}
\item[\textnormal{(i)}]
For every $m\in\mathbb N$,
\[
v_K\left(\mathfrak F_{s,m}^F(T^n)\right)\ge\left\lfloor\frac{n-q^{s-1}-mq^s}{(q-1)q^{s-1}}\right\rfloor_+.
\]

\item[\textnormal{(ii)}]
For every $m\in\mathbb Z$ with $m\ge -n$,
\[
v_K\left(\mathfrak F_{s,m}^F(T^{-n})\right)\ge\left\lfloor\frac{(q-1)(1-s-sm)-m-n-1}{q-1}\right\rfloor_+.
\]
\end{enumerate}
\end{thm}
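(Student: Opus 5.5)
We work with the Coleman trace characterized by
\[
(\operatorname{Tr}_{Co}g)([\pi]_F(T))=\sum_{\alpha\in F[\pi]} g(T+_F\alpha),
\]
and iterate it, so that $(\operatorname{Tr}_{Co}^s g)([\pi^s]_F(T))=\sum_{\alpha\in F[\pi^s]} g(T+_F\alpha)$. Since the statement only involves valuations of coefficients, we first reduce to a convenient $F$. Any two Lubin--Tate modules for $\pi$ are isomorphic over $\ded_K$ by an isomorphism $\theta(T)=uT+\cdots$ with $u\in\ded_K^\times$. Conjugating by $\theta$ changes the input monomial $T^{\pm n}$ into an integral combination of monomials of the same sign type with exponents $\ge n$ (respectively $\ge -n$), and changes the output by an integral substitution. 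So the plan is to prove a slightly stronger, monotone-in-$n$ form of the bounds, which is stable under such changes, and then take $F$ with $[\pi]_F(T)=\pi T+T^q$.

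The main tool is a one-step estimate, in the spirit of Wan's $\psi$-operator argument. We will show that $\operatorname{Tr}_{Co}$ sends $T^n$ to $\sum_m c_m T^m$ with
\[
v_K(c_m)\ge \left\lceil\frac{n-qm}{q-1}\right\rceil_+ \quad\text{roughly.}
\]
To see this, write $h(T)=\sum_{\alpha}(T+_F\alpha)^n$. This is a power series in $T$, invariant under translation by $F[\pi]$, so it is a series in $[\pi]_F(T)=\pi T+T^q$. The coefficient of $T^k$ in $h$ is a symmetric function of the roots of $[\pi]_F(X)/X$; its integrality and divisibility come from Newton's identities for those roots, which satisfy $X^{q-1}=-\pi$. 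The key point is that power sums of elements of valuation $1/(q-1)$ gain one factor of $\pi$ per $q-1$ degrees. Expressing $h$ as a polynomial in $Y=\pi T+T^q$ then uses a division algorithm in which $T^q=Y-\pi T$ costs nothing, while each leftover factor $\pi T$ contributes valuation. A careful count should yield the exponent $(n-qm)/(q-1)$.

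Iterating $s$ times gives the bound in (i). The induction passes through intermediate exponents $m_1,\dots,m_{s-1}$, and we optimize the sum of the $\lfloor\cdot\rfloor_+$ contributions. The worst chain yields $(n-q^{s-1}-mq^s)/((q-1)q^{s-1})$, and the $q^{s-1}$ shift absorbs the rounding losses, matching Wan's specialization to $\widehat{\mathbb G}_m$ in Theorem~\ref{thm:intro-Wan}(i). For (ii), we proceed the same way with $T^{-n}$. Now $\sum_\alpha (T+_F\alpha)^{-n}$ is a Laurent series, and the nonzero $\alpha$ contribute expansions in $T/\alpha$ with denominators $\alpha^{-k}$ of negative valuation. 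We would estimate $(T+_F\alpha)^{-1}=\alpha^{-1}(1+T/\alpha+\cdots)^{-1}$ and use the valuation $1/(q^{s-1}(q-1))$ of the primitive $\pi^s$-torsion points. This gives a linear lower bound in $m$ and $n$ of the stated shape, mirroring Wan's multinomial estimate in Theorem~\ref{thm:intro-Wan}(ii).

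The main obstacle is the one-step estimate: computing the expansion of a $[\pi]_F$-invariant series in powers of $[\pi]_F(T)$ with sharp valuations, especially in the negative-power case where cancellations among the $\alpha$ must be exploited rather than bounded termwise. We would first try to verify it for the model $[\pi]_F(T)=\pi T+T^q$ through an explicit recursion for the coefficients. We would then check that the reduction to general $F$ via $\theta$ really preserves the monotone form of the bounds.
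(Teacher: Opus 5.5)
\paragraph{Gap: the one-step estimate is never established.} Your plan for (i) has the same architecture as the paper: a one-step bound for $\operatorname{Tr}_{Co}$, iterated along chains of intermediate exponents. But the one-step bound carries all the content, and it is not proved. ``A careful count should yield'' the exponent via Newton's identities for the roots of $X^{q-1}=-\pi$ and a division algorithm in $Y=\pi T+T^q$ is a hope, not an argument.

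For (ii) there is no mechanism at all. Expanding $(T+_F\alpha)^{-1}$ in powers of $T/\alpha$ gives coefficients $\alpha^{-k}$ whose valuations tend to $-\infty$. This is not even the expansion in $\mathcal Q_k$, where the trace is defined; there one expands in $\alpha/T$. So nothing can be bounded termwise, and you do not say how the cancellation would be organized.

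\paragraph{Missing idea: the projection formula.} The formula is $\operatorname{Tr}_{Co}(\varphi_k(h)\,g)=h\,\operatorname{Tr}_{Co}(g)$, immediate from $[\pi]_F(a+_FT)=[\pi]_F(T)$ for $a\in F[\pi]$. Since $\frac{[\pi]_F(T)-T^q}{\pi T}\in\ded_k[[T]]$, it gives
\[
\operatorname{Tr}_{Co}(T^ng)=T\,\operatorname{Tr}_{Co}(T^{n-q}g)-\pi\,\operatorname{Tr}_{Co}\Bigl(T^{n-q+1}\,\frac{[\pi]_F(T)-T^q}{\pi T}\,g\Bigr).
\]
Induction on $n$ then yields
\[
\operatorname{Tr}_{Co}(T^ng)\in\sum_{j=0}^{\lfloor n/q\rfloor}T^j\pi^{\lfloor\frac{n-1-jq}{q-1}\rfloor}\ded_k[[T]]
\]
for every $g\in\ded_k[[T]]$ and every $F$. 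So your reduction to $[\pi]_F(T)=\pi T+T^q$ via $\theta$ is unnecessary.

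For (ii), take $h=T^{-n}$ in the same formula. This is legitimate because $[\pi]_F(T)\equiv T^q\pmod{\pi}$ is a unit of $\mathcal R_k$. It gives $\operatorname{Tr}_{Co}(T^{-n}g)=T^{-n}\operatorname{Tr}_{Co}\bigl(([\pi]_F(T)/T)^n g\bigr)$. Since $([\pi]_F(T)/T)^n\in\sum_{i=0}^n\pi^{n-i}T^{(q-1)i}\ded_k[[T]]$, the negative case reduces to the positive one-step bound. The total exponent is $n-i+\lfloor\frac{(q-1)i-1-jq}{q-1}\rfloor=\lfloor\frac{(q-1)n-1-jq}{q-1}\rfloor$, independent of $i$, and iterating gives exactly the bound in (ii). No cancellation among torsion points ever has to be analyzed.

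\paragraph{Normalization and integrality.} Your $\operatorname{Tr}_{Co}$ omits the factor $1/\pi$ of the paper's definition, but $\mathfrak F^F_{s,m}$ is defined with the normalized trace. In your convention the target is therefore the stated bound plus $s$. Your one-step bound $\lceil (n-qm)/(q-1)\rceil_+$ is the right one when it is positive. Where it is $0$, you actually need $1$. That is Coleman's integrality $\operatorname{Tr}_{Co}(\ded_k[[T]])\subset\ded_k[[T]]$ for the normalized trace, which you never invoke.

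This is not cosmetic. The extremal chains take each $m_i$ as large as possible, $m_i\approx m_{i-1}/q$, which is exactly where your one-step bound can be $0$. Without integrality you lose a unit at such steps and fall short of the stated exponent. The same input is also the source of the $\lfloor\cdot\rfloor_+$ in the statement and the base case $n<q$ of the induction above.
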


When $K=\mathbb Q_p$ and $F=\widehat{\mathbb G}_m$, the quantities $\mathfrak F_{s,m}^F(T^n)$ and $\mathfrak F_{s,m}^F(T^{-n})$ specialize respectively to the two types of polynomially weighted Fleck sums appearing in the $r=0$ cases of Theorem~\ref{thm:intro-Wan}. Thus the theorem recovers the corresponding cases of Wan's estimates.

Our third main result reconnects the Witt-theoretic and Fleck-theoretic generalizations above. We construct a Coleman $F$-norm on the $F$-big Witt ring and prove a Cartier-type expansion whose coefficients are expressed in terms of the $F$-Fleck operators. On ghost components, this becomes a convergent linear operator with generalized Fleck coefficients as its matrix coefficients. The integrality of the Cartier coefficients yields congruences relating distinct $F$-Fleck coefficients. In particular, when $F=\widehat{\mathbb G}_m$, we obtain the following congruence involving classical polynomially weighted Fleck sums.

\begin{cor}\label{cor:intro-new-congruence}
Let $p$ be an odd prime and let $m,n\in\mathbb N_{>0}$. Then
\begin{align*}
v_p\left(\sum_{d\mid(m,n)}\mu(d)(-1)^{n/d}\sum_{\substack{0\le a\le n/d\\p\mid a}}(-1)^a\binom{n/d}{a}\binom{a/p}{m/d}\right)\ge v_p(n)-1.
\end{align*}
Consequently, if $v_p(n)\ge2$, then
\[
\sum_{d\mid(m,n)}\mu(d)(-1)^{n/d}\sum_{\substack{0\le a\le n/d\\p\mid a}}(-1)^a\binom{n/d}{a}\binom{a/p}{m/d}\equiv 0\pmod{p^{v_p(n)-1}}.
\]
\end{cor}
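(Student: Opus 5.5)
The plan is to obtain the corollary by specializing the third main result (the Cartier-type expansion of the Coleman $F$-norm on $W^F$) to $F=\widehat{\mathbb G}_m$ over $\mathbb Q_p$. There, by Theorem~\ref{thm:intro-F-big-Witt}, $W^F$ is the usual big Witt ring $W(\mathbb Z_p)$. I will evaluate the norm on a suitably chosen Witt vector and read off the congruence from the integrality of a Cartier coordinate.

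\textbf{Step 1 (identifying the inner sum with a Fleck operator).} For $F=\widehat{\mathbb G}_m$ and $s=1$, I would run the computation of Lemma~\ref{lem:intro-Coleman} with $g=T^n$. One has $\sum_{\zeta^p=1}(\zeta(1+T)-1)^n=p\sum_{p\mid a}(-1)^{n-a}\binom na(1+T)^a$. Writing $(1+T)^a=\bigl(1+[p](T)\bigr)^{a/p}$ then shows
\[
\mathfrak F^{F}_{1,m}(T^n)=(-1)^{n}\sum_{\substack{0\le a\le n\\ p\mid a}}(-1)^a\binom na\binom{a/p}{m}=:C(n,m),
\]
up to the normalizing factor of $\operatorname{Tr}_{Co}$, which I will track; it is a power of $p$, $1$ or $p^{-1}$. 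The same computation with $n,m$ replaced by $n/d,m/d$ identifies each inner sum in the corollary as $C(n/d,m/d)$.

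\textbf{Step 2 (choice of Witt vector and ghost bookkeeping).} In Cartier coordinates write $f=\prod_k(1-x_kT^k)^{-1}$, so that $\log f=\sum_k \frac{w_k}{k}T^k$ with $w_k=\sum_{d\mid k}dx_d^{k/d}$. Since $\log N_{Co}f$ is $\operatorname{Tr}_{Co}(\log f)$ up to a constant, the ghost components of $N_{Co}f$ are
\[
w'_m=m\sum_k \frac{w_k}{k}\,C(k,m),
\]
which is the ``convergent linear operator with Fleck matrix coefficients'' from the third main result. I take $f$ to be the curve of the Teichm\"uller element supported at index $n$, that is, $x=e_n$ and $f=(1-T^n)^{-1}$; then $w_k=n$ when $n\mid k$. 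Recovering the Cartier coordinates $y_m$ of $N_{Co}f$ from $w'$ by M\"obius inversion, the contribution of the $k=n$ term to $y_m$ is, to leading order,
\[
\frac{1}{m}\sum_{d\mid(m,n)}\mu(d)\,\frac{m}{d}\cdot\frac{d}{n}\,n\,C(n/d,m/d).
\]
This is exactly the Möbius sum in the statement. The integrality of $y_m\in\mathbb Z_p$, guaranteed because $N_{Co}$ preserves $W(\mathbb Z_p)$, then bounds this sum.

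\textbf{Step 3 (isolating the term and counting the loss).} The main obstacle is that the inversion $w'\mapsto y$ is not linear, and that $w'_m$ also receives contributions from the terms $k=2n,3n,\dots$. To handle the non-linearity, I would work modulo $p^{v_p(n)-1}$ and exploit the fact that the nonlinear corrections $y_d^{m/d}$ appear with coefficients $d$. To handle the higher terms, I would either use a $\mathbb Z_p$-linear combination of the vectors $e_n,e_{2n},\dots$ that cancels them, or, if the third main result is stated directly on ghost components, apply it to the (non-integral) ghost vector $n\,\delta_{k,n}$ after clearing the denominator $n$. The second route seems the more likely intended one, and it is where the loss arises: clearing $n$ and the $p^{-1}$ from the trace normalization costs $p^{v_p(n)+1}$, while the factor $p$ in front of the Fleck sum gives one back. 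Net, the sum has valuation $\ge v_p(n)-1$. Getting the exponent exactly $v_p(n)-1$ is the delicate bookkeeping step. I would first check it numerically for $p=3$ and $n=9$ to fix the normalization. The final ``consequently'' statement is then immediate once $v_p(n)\ge2$.
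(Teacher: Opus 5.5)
There is a genuine gap in Steps~2--3. Step~1 is fine, and in fact the normalization is exact: $\mathfrak F^{\widehat{\mathbb G}_m}_{1,m}(T^n)=C(n,m)$. However, $\log N_{Co}f=p\,\widetilde{\operatorname{Tr}}_{Co}(\log f)$, so your $w'_m$ is missing a factor $p$.

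The choice of test vector is where things break. For $f=(1-T^n)^{-1}=v_n(\mathbf 1)$ the ghost components are $w_k=n$ for $n\mid k$ and $0$ otherwise. In particular $w_{n/d}=0$ for $d>1$, so the terms $C(n/d,m/d)$ with $d>1$ never appear. Möbius inversion in the output index gives $\frac pm\sum_{e\mid m}\mu(m/e)\,e\sum_{j\ge1}\frac1jC(nj,e)$, not $S(m,n):=\sum_{d\mid(m,n)}\mu(d)C(n/d,m/d)$.

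Worse, this route cannot produce the exponent $v_p(n)-1$ at all. In the paper's Cartier expansion $N_{Co}=\sum^{S}_{m',n'}[\Lambda_{m',n'}]v_{m'}f_{n'}$ one has $\Lambda_{m',n'}=\frac{p}{n'}S(m',n')$, and $f_{n'}v_n\mathbf 1=(n,n')\,v_{n/(n,n')}\mathbf 1$. Hence $\Lambda_{m,n}$ enters $N_{Co}(v_n\mathbf 1)$ only through $v_m\bigl(n\Lambda_{m,n}\cdot\mathbf 1\bigr)$, i.e.\ as $n\Lambda_{m,n}=pS(m,n)$, mixed with $n\Lambda_{m,nj}$ ($j\ge2$) and further terms. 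The weight $w_n=n$ cancels exactly the $1/n$ responsible for the bound. So even perfect isolation would give only $v_p(S(m,n))\ge-1$, and no bookkeeping or numerical normalization check can restore the missing $v_p(n)$.

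Your alternative ghost vector $n\delta_{k,n}$ does no better. After multiplying by $p$ to make it integral (the curve $\exp(pT^n)$), you get constraints on $p^2\sum_{e\mid m}\mu(m/e)\frac em C(n,e)$. That is again the wrong sum, and it does not depend on $v_p(n)$.

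What is missing is a formal variable that records the input degree. The paper applies the norm to the universal Teichm\"uller curve $(1-VT)^{-1}$ over $\mathbb Z_p[[V]]$. Its image has ghost components $p\,s\sum_{M}C(M,s)V^M/M$, in which the $1/M$ survives. Expand this in the elements $[\lambda]\,v_s\tau(V^M)$, whose ghost component at index $st$ is $\lambda sV^{Mt}$; these scale input and output by the same $t$. Inverting with the Dirichlet inverse $\eta(d)=\mu(d)/d$ of $d\mapsto 1/c_d$ gives exactly $\lambda_{M,s}=\frac pM\sum_{d\mid(M,s)}\mu(d)C(M/d,s/d)$.

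Integrality of the image, together with induction on $M+s$, yields $\lambda_{M,s}\in\mathbb Z_p$; the induction works because in top total degree the $F$-sum is ordinary addition. This is Theorem~\ref{new-cong}(ii), and the corollary is then just $v_p\bigl(\frac pnS(m,n)\bigr)\ge0$. Equivalently, if you take the third main result to include integrality of the Cartier coefficients $\Lambda_{m,n}$, the statement follows immediately and no test vector is needed.
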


\section{$\sigma$-twisted Lubin--Tate formal logarithms}

Let $p$ be an odd prime, let $k$ be a finite Galois extension of $\mathbb{Q}_p$ with valuation ring $\ded_k$ and a uniformizer $\pi$, and let $\sigma\in\operatorname{Gal}(k/\mathbb{Q}_p)$ be a lift of the $q=p^{f_\sigma}$-power Frobenius, so that
\[
\sigma\alpha\equiv \alpha^q\pmod{\pi}.
\]
Such lifts $\sigma$ are unique modulo the inertia group. We write $\mathbb{N}_{(p)}\subset \mathbb{N}_{>0}$ for the monoid generated by all primes different from $p$.

Hazewinkel\cite{Hazewinkel} introduced $\sigma$-twisted Lubin--Tate formal groups associated with $\pi$ and established the corresponding functional-equation lemma.

\begin{defn}
Let $R$ be a $\pi$-torsion-free $\ded_k[\langle\sigma\rangle]$-algebra such that $\sigma$ is a lift of the $q$-power Frobenius modulo $\pi$, i.e.,
\[
\sigma(\alpha)\equiv \alpha^q\pmod{\pi R}~~~(\alpha\in R).
\]
We also put
\[
K=k\otimes_{\ded_k}R.
\]

Let $\ell(T)\in TK[[T]]$. We call $\ell$ a \textit{$\sigma$-twisted Lubin--Tate formal logarithm over $R$ associated with $\pi$} if
\[
\ell(T)-\frac{1}{\pi}\sigma(\ell(T^q))\in TR[[T]].
\]
We denote the set of all such logarithms by $\mathcal{LT}_R$. If, in addition,
\[
\ell(T)\equiv T\pmod{T^2},
\]
then $\ell$ is called a \textit{normalized $\sigma$-twisted Lubin--Tate formal logarithm over $R$ associated with $\pi$}.
\end{defn}

\begin{lem}\label{FEL}
Let
\[
\ell(T)=\sum_{n=1}^{\infty}b_nT^n ~~~(b_1=1,\ b_n\in K)
\]
be a normalized $\sigma$-twisted Lubin--Tate formal logarithm over $R$ associated with $\pi$. Then the following assertions hold.
\begin{enumerate}
\item[\textnormal{(i)}]
For every $n=q^im\in\mathbb{N}_{>0},~~q\nmid m$, we have
\[
b_n\in\pi^{-i}R.
\]

\item[\textnormal{(ii)}]
For every $h\in R[[X_1,\dots,X_r]]$ and every $n=q^im\in\mathbb{N}_{>0}~~~(q\nmid m,~~i\ge 1)$, we have
\[
h(X_1,\dots,X_r)^n\equiv (\sigma h)(X_1^q,\dots,X_r^q)^{n/q}\pmod{\pi^iR[[X_1,\dots,X_r]]}.
\]

\item[\textnormal{(iii)}]
For every $g\in (X_1,\dots,X_r)R[[X_1,\dots,X_r]]$, we have
\[
(\ell\circ g)(X_1,\dots,X_r)-\frac{1}{\pi}(\sigma(\ell\circ g))(X_1^q,\dots,X_r^q)\in R[[X_1,\dots,X_r]].
\]

\item[\textnormal{(iv)}]
Suppose that $h\in (X_1,\dots,X_r)K[[X_1,\dots,X_r]]$ satisfies
\[
h(X_1,\dots,X_r)-\frac{1}{\pi}(\sigma h)(X_1^q,\dots,X_r^q)\in R[[X_1,\dots,X_r]].
\]
Then
\[
\ell^{-1}\circ h\in (X_1,\dots,X_r)R[[X_1,\dots,X_r]].
\]
\end{enumerate}
\end{lem}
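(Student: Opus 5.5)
This is Hazewinkel's functional-equation lemma, and I would prove the four parts in the order (ii), (i), (iii), (iv), since each uses the previous ones.

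\textbf{Part (ii).} First take $i=1$. Write $h^q = \sigma h(X^q) + \pi u$ with $u\in R[[X]]$; this uses that $\sigma$ lifts the $q$-power Frobenius modulo $\pi$, together with $(a+b)^q\equiv a^q+b^q$ modulo $p$, and hence modulo $\pi$. For general $i$, induct using the standard lemma: if $A\equiv B\pmod{\pi^jR[[X]]}$ with $j\ge1$, then $A^q\equiv B^q\pmod{\pi^{j+1}R[[X]]}$. To prove the lemma, write $A=B+\pi^jc$ and expand $(B+\pi^jc)^q$. The middle binomial terms are divisible by $p\pi^j$, and $p\in\pi R$. The last term is $\pi^{jq}c^q$, and $jq\ge j+1$. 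This gives $h^{q^i}\equiv (\sigma h)(X^q)^{q^{i-1}} \pmod{\pi^i}$. Raising to the power $m$ preserves the congruence.

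\textbf{Part (i).} Compare coefficients of $T^n$ in the defining condition $\ell(T)-\pi^{-1}\sigma\ell(T^q)\in TR[[T]]$. This gives $b_n-\pi^{-1}\sigma(b_{n/q})\in R$ when $q\mid n$, and $b_n\in R$ when $q\nmid n$. Induction on $i$ then yields $b_n\in \pi^{-i}R$. Here we use $\sigma(\pi^{-(i-1)}R)\subset\pi^{-(i-1)}R$, which holds because $\sigma$ is an automorphism of $k$ fixing the ideal $(\pi)$ (so $\sigma\pi=\pi\cdot$unit) and $\sigma R\subset R$.

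\textbf{Part (iii).} Write
\[
\ell(g)-\pi^{-1}(\sigma\ell\circ g)(X^q)=\sum_n b_n g^n-\pi^{-1}\sum_n\sigma(b_n)\,(\sigma g)(X^q)^n .
\]
Regroup this as
\[
\sum_n\Bigl(b_n-\pi^{-1}\sigma(b_{n/q})\Bigr)g^n+\pi^{-1}\sum_{n}\sigma(b_{n/q})\Bigl(g^n-(\sigma g)(X^q)^{n/q}\Bigr),
\]
with the convention $b_{n/q}=0$ if $q\nmid n$. The first sum is integral by the coefficient relations from (i). In the second sum, $\sigma(b_{n/q})\in\pi^{-(i-1)}R$ by (i), and the bracket lies in $\pi^iR[[X]]$ by (ii). Multiplied by $\pi^{-1}$, each term is therefore in $R[[X]]$. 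Convergence is fine because $g$ has no constant term.

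\textbf{Part (iv).} This is the main step. Set $\varphi=\ell^{-1}\circ h\in (X)K[[X]]$. I show by induction on total degree that all coefficients of $\varphi$ lie in $R$. Suppose $\varphi\equiv g \pmod{\deg\ge d+1}$ with $g\in (X)R[X]$ a polynomial, and write $\varphi=g+\delta$ with $\delta$ having lowest-degree part $c$ homogeneous of degree $d+1$. Then
\[
h=\ell(\varphi)\equiv \ell(g)+c \pmod{\deg\ge d+2},
\]
since $b_1=1$. Now apply the operator $Lf=f-\pi^{-1}(\sigma f)(X^q)$. We have $L(h)\in R$ by hypothesis and $L(\ell\circ g)\in R$ by (iii). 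Moreover, $L(\ell(\varphi))-L(\ell(g))\equiv c \pmod{\deg\ge d+2}$, because the twisted terms $(\sigma\cdot)(X^q)$ only contribute in degrees $\ge q(d+1)>d+1$. Hence $c$ has coefficients in $R$. One subtlety is making sure the higher-order terms of $\ell(g+\delta)-\ell(g)$ do not pollute degree $d+1$; this holds since $\delta$ starts in degree $d+1$. A second subtlety is that the $\pi^{-1}\sigma$ part is evaluated at $X^q$, hence sits in strictly higher degree. These degree bookkeeping points are where I would be careful.
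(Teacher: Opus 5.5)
Your proposal is correct and takes essentially the same approach as the paper: coefficient comparison plus induction on $i$ for (i), a binomial-expansion induction for (ii), the same regrouping of $\ell\circ g$ via (i) and (ii) for (iii), and a degree-by-degree induction for (iv). In (iv), both arguments apply (iii) to the integral truncation and use that the twisted term $\pi^{-1}(\sigma\,\cdot)(X^q)$ only lives in degree at least $q$ times the leading degree. The only differences are cosmetic. You prove (ii) for $q^i$ and then raise to the $m$-th power, where the paper inducts directly on $n$. You also state explicitly that $\sigma(\pi)$ is a unit multiple of $\pi$, which the paper uses tacitly in (i).
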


\begin{proof}
(i)Since $\ds \ell(T)-\frac{1}{\pi}\sigma(\ell(T^q))\in TR[[T]]$, we have $q\nmid n\Longrightarrow b_n\in R$, and $\ds q\mid n\Longrightarrow b_n-\frac{1}{\pi}\sigma(b_{n/q})\in R$. The assertion is clear for $i=0$. Suppose that it holds for $i-1$. Then
\[
b_n\in R+\frac{1}{\pi}\sigma(b_{n/q})\subset R+\pi^{-1}\sigma(\pi^{-(i-1)}R)\subset R+\pi^{-i}R=\pi^{-i}R.
\]
Hence, by induction,
\[
b_n\in\pi^{-i}R~~~(\forall n=q^im\in\mathbb{N}_{>0}).
\]

\medskip

(ii)Write
\[
h(X_1,\dots,X_r)^m=\sum_{\mathbf{j}=\mathbf{0}}^\infty a_{\mathbf{j}}X_1^{j_1}\dots X_r^{j_r}.
\]
Then
\begin{align*}
h(X_1,\dots,X_r)^{qm}&\equiv \sum_{\mathbf{j}=\mathbf{0}}^\infty a_{\mathbf{j}}^qX_1^{qj_1}\dots X_r^{qj_r}\equiv\sum_{\mathbf{j}=\mathbf{0}}^\infty \sigma(a_{\mathbf{j}})X_1^{qj_1}\dots X_r^{qj_r} \pmod{\pi R} \\
&=(\sigma h)(X_1^q,\dots,X_r^q)^m.
\end{align*}
Thus the assertion holds for $i=1$. Assume that it holds for $i-1$. Then
\[
h(X_1,\dots,X_r)^{n/q}-(\sigma h)(X_1^q,\dots,X_r^q)^{n/q^2}\in \pi^{i-1}R[[X_1,\dots,X_r]].
\]
Write $\pi^{i-1}H(X_1,\dots,X_r)=h(X_1,\dots,X_r)^{n/q}-(\sigma h)(X_1^q,\dots,X_r^q)^{n/q^2}$, then
\begin{align*}
&h(X_1,\dots,X_r)^n-(\sigma h)(X_1^q,\dots,X_r^q)^{n/q} \\
=&\left\{\pi^{i-1}H(X_1,\dots,X_r)+(\sigma h)(X_1^q,\dots,X_r^q)^{n/q^2}\right\}^q-(\sigma h)(X_1^q,\dots,X_r^q)^{n/q} \\
=&\sum_{s=1}^{q}\binom{q}{s} \pi^{(i-1)s}H(X_1,\dots,X_r)^s(\sigma h)(X_1^q,\dots,X_r^q)^{\frac{n}{q^2}(q-s)} \\
\in &\pi^iR[[X_1,\dots,X_r]].
\end{align*}
Hence the assertion follows by induction.

\medskip

(iii)By (i) and (ii),
\begin{align*}
(\ell\circ g)(X_1,\dots,X_r)&\equiv \sum_{q\mid n}b_ng(X_1,\dots,X_r)^n\equiv\frac{1}{\pi}\sum_{q\mid n}\sigma(b_{n/q})g(X_1,\dots,X_r)^n \\
&\equiv\frac{1}{\pi}\sum_{q\mid n}\sigma(b_{n/q})\sigma(g)(X_1^q,\dots,X_r^q)^{n/q} \\
&=\frac{1}{\pi}(\sigma(\ell\circ g))(X_1^q,\dots,X_r^q)\pmod{R[[X_1,\dots,X_r]]}.
\end{align*}
Therefore
\[
(\ell\circ g)(X_1,\dots,X_r)-\frac{1}{\pi}(\sigma(\ell\circ g))(X_1^q,\dots,X_r^q)\in R[[X_1,\dots,X_r]].
\]

\medskip

(iv)Write $\ds (\ell^{-1}\circ h)(X_1,\dots,X_r)=\sum_{d=1}^{\infty}G_d(X_1,\dots,X_r)$, where $G_d(X_1,\dots,X_r)$ is homogeneous of degree $d$. Since $\ell^{-1}(T)\equiv T\pmod{T^2}$, we have
\begin{align*}
G_1(X_1,\dots,X_r)&\equiv (\ell^{-1}\circ h)(X_1,\dots,X_r)\equiv h(X_1,\dots,X_r) \\
&\equiv\frac{1}{\pi}(\sigma h)(X_1^q,\dots,X_r^q)\equiv 0
\end{align*}
modulo $R[[X_1,\dots,X_r]]+(X_1,\dots,X_r)^2K[[X_1,\dots,X_r]]$. Hence $G_1\in R[X_1,\dots,X_r]$. Suppose that the assertion holds in degrees less than $d$, and put
\[
g_d=\sum_{d'=1}^{d-1}G_{d'}\in R[[X_1,\dots,X_r]].
\]
By (iii),
\[
(\ell\circ g_d)(X_1,\dots,X_r)-\frac{1}{\pi}(\sigma(\ell\circ g_d))(X_1^q,\dots,X_r^q)\in R[[X_1,\dots,X_r]].
\]
Put $h_d=h-\ell\circ g_d$. Then
\[
h_d(X_1,\dots,X_r)-\frac{1}{\pi}(\sigma h_d)(X_1^q,\dots,X_r^q)\in R[[X_1,\dots,X_r]].
\]
Since $h_d$ has no terms of degree less than $d$,
the series $h_d(X_1^q,\dots,X_r^q)$ has degree at least $qd>d$.
Moreover,
\begin{align*}
G_d(X_1,\dots,X_r)&\equiv (\ell^{-1}\circ h)(X_1,\dots,X_r)-g_d(X_1,\dots,X_r) \\
&\equiv (\ell\circ\ell^{-1}\circ h)(X_1,\dots,X_r)-(\ell\circ g_d)(X_1,\dots,X_r) \\
&=h(X_1,\dots,X_r)-(\ell\circ g_d)(X_1,\dots,X_r) \\
&=h_d(X_1,\dots,X_r) \\
&\equiv\frac{1}{\pi}(\sigma h_d)(X_1^q,\dots,X_r^q)\equiv 0
\end{align*}
modulo $R[[X_1,\dots,X_r]]+(X_1,\dots,X_r)^{d+1}K[[X_1,\dots,X_r]]$. Thus
\[
G_d(X_1,\dots,X_r)\in R[X_1,\dots,X_r].
\]
The assertion follows by induction.
\end{proof}

\begin{cor}\label{FEL-cor}
Let $\ell$ be a normalized $\sigma$-twisted Lubin--Tate formal
logarithm over $R$ associated with $\pi$, and define
\[
\widetilde{\ell}:TK[[T]]\xrightarrow{\sim}TK[[T]];~g\longmapsto\ell\circ g.
\]
Then the following hold.

\begin{enumerate}
\item[\textnormal{(i)}]
The power series
\[
F(X,Y)=\ell^{-1}(\ell(X)+\ell(Y))
\]
defines a formal group over $R$.

\item[\textnormal{(ii)}]
The map
\[
\widetilde{\ell}:TR[[T]]\xrightarrow{\sim}\mathcal{LT}_R;~g\longmapsto\ell\circ g,
\]
is a bijection.

\item[\textnormal{(iii)}]
We have
\[
\widetilde{\ell}^{-1}(\mathcal{LT}_R)=TR[[T]].
\]

\item[\textnormal{(iv)}]
For every $\alpha\in TR[[T]]$, $\beta\in TK[[T]]$, and $s,t\in\mathbb{N}_{>0}$,
\[
\alpha\equiv\beta\pmod{\pi^sR[[T]]+T^tK[[T]]}
\]
if and only if
\[
\ell\circ\alpha\equiv\ell\circ\beta \pmod{\pi^sR[[T]]+T^tK[[T]]}.
\]
\end{enumerate}
\end{cor}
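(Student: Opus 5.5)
We derive all four parts from Lemma~\ref{FEL}, taking each part in turn.

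For (i), the plan is to show that $F(X,Y)=\ell^{-1}(\ell(X)+\ell(Y))$ has coefficients in $R$. We apply Lemma~\ref{FEL}(iv) with $r=2$ and $h(X,Y)=\ell(X)+\ell(Y)$. Lemma~\ref{FEL}(iv) requires
\[
h(X,Y)-\tfrac{1}{\pi}(\sigma h)(X^q,Y^q)\in R[[X,Y]].
\]
This difference is the sum of two one-variable expressions of the form $\ell(T)-\frac1\pi\sigma(\ell(T^q))$, one in $X$ and one in $Y$, and each lies in $TR[[T]]$ by the definition of a $\sigma$-twisted logarithm. Hence $F\in (X,Y)R[[X,Y]]$. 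The formal group axioms (associativity, commutativity, $F(X,0)=X$) hold over $K$ because $F$ is transported from the additive group by $\ell$. Therefore $F$ is a formal group over $R$.

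For (ii), we first check that the map lands in $\mathcal{LT}_R$. For $g\in TR[[T]]$, Lemma~\ref{FEL}(iii) with $r=1$ gives $\ell\circ g-\frac1\pi\sigma(\ell\circ g)(T^q)\in R[[T]]$. Both series have no constant term, so the difference lies in $TR[[T]]$, and thus $\ell\circ g\in\mathcal{LT}_R$. Injectivity is clear, since $\ell$ is invertible under composition over $K$. For surjectivity, let $h\in\mathcal{LT}_R$. Then Lemma~\ref{FEL}(iv) gives $\ell^{-1}\circ h\in TR[[T]]$, and $h=\ell\circ(\ell^{-1}\circ h)$. Part (iii) is then a restatement of (ii): the preimage in $TK[[T]]$ of $\mathcal{LT}_R$ under the bijection $\widetilde\ell$ is exactly $TR[[T]]$.

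Part (iv) is the main point needing care, because $\beta$ only has coefficients in $K$. Suppose $\alpha\equiv\beta$ modulo $\pi^sR[[T]]+T^tK[[T]]$. Truncating $\beta$ below degree $t$, we may write
\[
\beta\equiv \alpha+\pi^s\gamma \pmod{T^t}
\]
with $\gamma\in TR[[T]]$. Then $\beta':=\alpha+\pi^s\gamma$ lies in $TR[[T]]$ and $\ell\circ\beta\equiv\ell\circ\beta'\pmod{T^t}$. So it suffices to show that for $\alpha,\beta'\in TR[[T]]$ with $\beta'\equiv\alpha\pmod{\pi^s}$ we have $\ell\circ\beta'\equiv\ell\circ\alpha\pmod{\pi^sR[[T]]}$. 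Expanding, this requires
\[
b_n(\beta'^n-\alpha^n)\in\pi^sR[[T]]\qquad\text{for } n=q^im,\ q\nmid m.
\]
By Lemma~\ref{FEL}(i) we have $b_n\in\pi^{-i}R$. It therefore suffices to show $\beta'^n\equiv\alpha^n\pmod{\pi^{s+i}}$. We prove this by induction on $i$, using the same binomial argument as in Lemma~\ref{FEL}(ii). Write $\beta'^{n/q}=\alpha^{n/q}+\pi^{s+i-1}H$. Raising to the $q$-th power, every cross term carries either $\binom{q}{j}$ with $0<j<q$, which is divisible by $p$ and hence by $\pi$, or a factor $\pi^{q(s+i-1)}$ with $q(s+i-1)\ge s+i$. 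Hence all cross terms are divisible by $\pi^{s+i}$. The case $i=0$ is immediate.

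The converse direction is symmetric. Put $\alpha'=\ell\circ\alpha$ and $\beta''=\ell\circ\beta$, and suppose they are congruent modulo $\pi^sR[[T]]+T^tK[[T]]$. The same estimate applied to $\ell^{-1}$ does not directly work, since $\ell^{-1}$ has coefficients in $R$ only after composition. Instead we argue degree by degree, as in the proof of Lemma~\ref{FEL}(iv). Suppose $\beta\not\equiv\alpha$ modulo $\pi^sR[[T]]+T^tK[[T]]$, and let $d<t$ be the first degree at which the coefficient difference $c$ is not in $\pi^sR$. Replace $\beta$ by a series $\tilde\beta$ that agrees with $\alpha$ modulo $\pi^s$ below degree $d$ and lies in $TR[[T]]$ there. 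By the forward direction, $\ell\circ\beta-\ell\circ\tilde\beta\equiv cT^d$ modulo $T^{d+1}K[[T]]$ and $\pi^sR$. This contradicts the assumed congruence of $\ell\circ\alpha$ and $\ell\circ\beta$. The main obstacle is handling the $K$-coefficients cleanly in this converse step, and I expect the degree-by-degree reduction to be where the bookkeeping must be done carefully.
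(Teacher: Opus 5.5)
Your proposal is correct and follows the paper's proof essentially step by step. Parts (i)--(iii) come from parts (iii) and (iv) of Lemma~\ref{FEL} exactly as in the paper, and for (iv) your forward direction (an integral series congruent to $\beta$ modulo $T^t$, then $b_n\in\pi^{-i}R$ together with $\beta'^n\equiv\alpha^n\pmod{\pi^{s+i}}$) and your minimal-counterexample converse are the paper's truncation argument and degree-by-degree induction in slightly different form. The converse is cleanest if you take $\tilde\beta$ equal to $\beta$ below degree $d$ and to $\alpha$ from degree $d$ on: then $\ell\circ\beta-\ell\circ\tilde\beta\equiv(\beta_d-\alpha_d)T^d\pmod{T^{d+1}K[[T]]}$ holds exactly, the forward direction gives $\ell\circ\tilde\beta\equiv\ell\circ\alpha\pmod{\pi^sR[[T]]}$, and together these force $\alpha_d-\beta_d\in\pi^sR$.
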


\begin{proof}
(i)Since $\ell(T)\equiv T\pmod{T^2}$, there exists $\ell^{-1}(T)\in T+T^2K[[T]]$. Hence $F(X,Y)=\ell^{-1}(\ell(X)+\ell(Y))\in K[[X,Y]]$ and
\[
F(X,Y)\equiv X+Y\pmod{(X,Y)^2K[[X,Y]]}.
\]
Moreover,
\begin{align*}
F(F(X,Y),Z)&=\ell^{-1}\bigl(\ell(X)+\ell(Y)+\ell(Z)\bigr) \\
&=F(X,F(Y,Z)),
\end{align*}
and
\[
F(X,0)=X,\qquad F(0,X)=X,
\]
while
\[
F(X,Y)=F(Y,X).
\]
Thus $F$ is a commutative formal group over $K$. Furthermore,
\[
(\ell(X)+\ell(Y))-\frac{1}{\pi}\sigma(\ell(X^q)+\ell(Y^q))\in R[[X,Y]].
\]
Therefore, by part (iv) of the functional-equation lemma, $F(X,Y)\in R[[X,Y]]$. Hence $F$ is defined over $R$.

\medskip

(ii) and (iii) follow immediately from parts (iii) and (iv) of the functional-equation lemma.

\medskip

(iv)The case $t=1$ is immediate. Assume $t>1$. Suppose first that
\[
\alpha\equiv\beta\pmod{\pi^sR[[T]]+T^tK[[T]]}.
\]
Let $\widetilde{\alpha}_t$ and $\widetilde{\beta}_t$ denote the truncations of $\alpha$ and $\beta$, respectively, to terms of degree at most $t-1$. Then
\[
\widetilde{\beta}_t\equiv\widetilde{\alpha}_t\pmod{\pi^sR[[T]]},
\]
and hence $\widetilde{\beta}_t\in R[[T]]$. By the binomial theorem,
\[
\widetilde{\alpha}_t^{q^i}\equiv\widetilde{\beta}_t^{q^i}\pmod{\pi^{s+i}R[[T]]},
\]
and consequently, for $n=q^im$ with $q\nmid m$,
\[
\widetilde{\alpha}_t^n\equiv\widetilde{\beta}_t^n\pmod{\pi^{s+i}R[[T]]}.
\]
By part (i) of the functional-equation lemma,
\begin{align*}
\ell\circ\alpha&\equiv\ell\circ\widetilde{\alpha}_t=\sum_{n=1}^{\infty}b_n\widetilde{\alpha}_t^n\equiv\sum_{n=1}^{\infty}b_n\widetilde{\beta}_t^n=\ell\circ\widetilde{\beta}_t \\
&\equiv\ell\circ\beta\pmod{\pi^sR[[T]]+T^tK[[T]]}.
\end{align*}

Conversely, suppose that
\[
\ell\circ\alpha\equiv\ell\circ\beta\pmod{\pi^sR[[T]]+T^tK[[T]]}.
\]
Write
\[
\alpha(T)=\sum_{j=1}^{\infty}\alpha_jT^j,~~\beta(T)=\sum_{j=1}^{\infty}\beta_jT^j.
\]
Since $\ell(T)\equiv T\pmod{T^2}$, we obtain $\alpha_1\equiv\beta_1\pmod{\pi^sR}$. Suppose inductively that, for some $d<t$,
\[
\alpha_j\equiv\beta_j\pmod{\pi^sR}~~~(j=1,\dots,d-1).
\]
Let $\widetilde{\alpha}_d$ and $\widetilde{\beta}_d$ denote the truncations to degree at most $d-1$. Then
\[
\widetilde{\alpha}_d\equiv\widetilde{\beta}_d\pmod{\pi^sR[[T]]}.
\]
For every $j=q^im\in\mathbb{N}_{>0}$ with $q\nmid m$,
\[
\widetilde{\alpha}_d^j\equiv\widetilde{\beta}_d^j\pmod{\pi^{s+i}R[[T]]}.
\]
Let $A_{d,j}$ and $B_{d,j}$ denote the coefficients of $T^d$ in $\widetilde{\alpha}_d^j$ and $\widetilde{\beta}_d^j$, respectively.
Comparing the coefficients of $T^d$ in $\ell\circ\alpha-\ell\circ\beta$, we obtain
\[
0\equiv(\alpha_d-\beta_d)+\sum_{j=1}^{d}b_j(A_{d,j}-B_{d,j})\equiv\alpha_d-\beta_d\pmod{\pi^sR}.
\]
Thus $\alpha_d\equiv\beta_d\pmod{\pi^sR}$. By induction,
\[
\alpha_d\equiv\beta_d\pmod{\pi^sR}~~~(d=1,\dots,t-1),
\]
and hence
\[
\alpha\equiv\beta\pmod{\pi^sR[[T]]+T^tK[[T]]}.
\]
\end{proof}

\begin{defn}
Let $\ell$ be a normalized $\sigma$-twisted Lubin--Tate formal logarithm over $R$ associated with $\pi$. The formal group
\[
F(X,Y)=\ell^{-1}(\ell(X)+\ell(Y))
\]
is called the \textit{$\sigma$-twisted Lubin--Tate formal group over $R$ associated with $\pi$ having logarithm $\ell$}.
Then
\[
\ell=\log_F.
\]

Equivalently, a $\sigma$-twisted Lubin--Tate formal group over $R$ associated with $\pi$ is a formal group $F$ over $R$ such that
\[
\log_F(T)-\frac{1}{\pi}\sigma(\log_F(T^q))\in TR[[T]].
\]
\end{defn}

\begin{lem}
Let $F$ be a $\sigma$-twisted Lubin--Tate formal group over $R$ associated with $\pi$, and define
\[
[\cdot]_F:K\longrightarrow\operatorname{End}_K(F);~\alpha\longmapsto \exp_F(\alpha\log_F(T)).
\]
Then the following hold.

\begin{enumerate}
\item[\textnormal{(i)}]
If $\alpha\in R^{\langle\sigma\rangle}$, then
\[
[\alpha]_F(T)\in\operatorname{End}_R(F).
\]

\item[\textnormal{(ii)}]
Suppose that $K/k$ is a finite unramified extension, $R=\ded_K$, $q=\#\kappa_K$, and $\sigma=\id$. Then $(F,[\cdot]_F)$ is a Lubin--Tate formal $\ded_K$-module.
\end{enumerate}
\end{lem}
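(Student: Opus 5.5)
For (i), the plan is to apply part (iv) of Lemma~\ref{FEL} to $h=\alpha\log_F(T)$, which will give $[\alpha]_F=\exp_F\circ h=\log_F^{-1}\circ h\in TR[[T]]$. Since $\alpha\in R^{\langle\sigma\rangle}$ we have $\sigma(\alpha)=\alpha$, so
\[
h(T)-\frac{1}{\pi}(\sigma h)(T^q)=\alpha\Bigl(\log_F(T)-\frac{1}{\pi}\sigma(\log_F(T^q))\Bigr)\in\alpha\, TR[[T]]\subset TR[[T]],
\]
by the defining property of $F$ and because $\alpha\in R$. The hypothesis of Lemma~\ref{FEL}(iv) therefore holds for $h$. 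Next we check that $[\alpha]_F$ is an endomorphism of $F$. We have
\[
F([\alpha]_F(X),[\alpha]_F(Y))=\exp_F\bigl(\alpha\log_F X+\alpha\log_F Y\bigr)=[\alpha]_F(F(X,Y)),
\]
which is an identity in $K[[X,Y]]$. Both sides have coefficients in $R$, and $R\subset K$ because $R$ is $\pi$-torsion-free. So $[\alpha]_F\in\operatorname{End}_R(F)$.

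For (ii), assume $\sigma=\mathrm{id}$ and $R=\ded_K$. Then $R^{\langle\sigma\rangle}=\ded_K$, and part (i) gives a map $\ded_K\to\operatorname{End}_{\ded_K}(F)$. This map is a ring homomorphism, since $[\alpha+\beta]_F=F([\alpha]_F,[\beta]_F)$ and $[\alpha\beta]_F=[\alpha]_F\circ[\beta]_F$ both follow from linearity on logarithms. It also satisfies $[\alpha]_F(T)\equiv\alpha T\pmod{T^2}$, because $\log_F$ is normalized. So $F$ is a formal $\ded_K$-module. It remains to show it is Lubin--Tate, i.e.\ that $[\pi]_F(T)\equiv T^q\pmod{\pi}$. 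Note that $\pi$ is still a uniformizer of $K$, since $K/k$ is unramified, and $q=\#\kappa_K$ by hypothesis.

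The key step is this congruence for $[\pi]_F$. Put $g=[\pi]_F\in TR[[T]]$ and $\beta(T)=T^q$. We have $\log_F\circ g=\pi\log_F(T)$. With $\sigma=\mathrm{id}$, the functional equation gives
\[
\pi\log_F(T)-\log_F(T^q)\in\pi TR[[T]],
\]
so $\log_F\circ g\equiv\log_F\circ\beta\pmod{\pi R[[T]]}$. To pass from logarithms back to the series themselves, apply Corollary~\ref{FEL-cor}(iv) with $s=1$ and every $t$ to $\alpha=g$ and $\beta=T^q$. This yields $g\equiv T^q\pmod{\pi R[[T]]+T^tK[[T]]}$ for all $t$. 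Since $g-T^q\in R[[T]]$, this means each coefficient of $g-T^q$ lies in $\pi R$, so $[\pi]_F(T)\equiv T^q\pmod{\pi}$. Together with $[\pi]_F(T)\equiv\pi T\pmod{T^2}$, this shows $(F,[\cdot]_F)$ is the Lubin--Tate formal $\ded_K$-module attached to $[\pi]_F$.

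I expect this last step to be the only real obstacle, namely checking that the hypotheses of Corollary~\ref{FEL-cor}(iv) apply with $\sigma=\mathrm{id}$. That requires $\mathrm{id}$ to be a lift of the $q$-power Frobenius on $\ded_K$, i.e.\ $\alpha^q\equiv\alpha\pmod\pi$, which holds because $q=\#\kappa_K$. It is exactly why the hypothesis $q=\#\kappa_K$ in (ii) is needed. If the corollary's setup caused difficulty, I would instead compare coefficients degree by degree directly, using Lemma~\ref{FEL}(i) and (ii), as in the proof of Corollary~\ref{FEL-cor}(iv).
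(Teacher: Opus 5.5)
Your proposal is correct and follows essentially the same route as the paper: part (i) applies Lemma~\ref{FEL}(iv) to $\alpha\log_F(T)$, and part (ii) uses the functional equation to get $\log_F([\pi]_F(T))\equiv\log_F(T^q)\pmod{\pi\ded_K[[T]]}$ and then Corollary~\ref{FEL-cor}(iv) for every truncation degree $t$. The extra checks you include (the endomorphism identity, the ring-homomorphism property, $[\alpha]_F(T)\equiv\alpha T\pmod{T^2}$, and that $\pi$ remains a uniformizer of $K$) are points the paper leaves implicit.
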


\begin{proof}
(i)For every $\alpha\in R^{\langle\sigma\rangle}$,
\[
\alpha\log_F(T)-\frac{1}{\pi}\sigma(\alpha\log_F(T^q))=\alpha\left(\log_F(T)-\frac{1}{\pi}(\sigma\log_F)(T^q)\right)\in R[[T]].
\]
Hence, by part (iv) of the functional-equation lemma,
\[
[\alpha]_F(T)=\exp_F(\alpha\log_F(T))\in R[[T]].
\]
Thus $[\alpha]_F\in\operatorname{End}_R(F)$.

\medskip

(ii)Since $R^{\langle\sigma\rangle}=R$, we obtain
\[
[\cdot]_F:\ded_K\to \operatorname{End}_{\ded_K}(F),
\]
so $(F,[\cdot]_F)$ is a formal $\ded_K$-module.

Moreover, since $\log_F(T)\equiv T\pmod{T^2}$, we have
\[
[\pi]_F(T)=\exp_F(\pi\log_F(T))\equiv \pi T\pmod{T^2}.
\]
Also,
\begin{align*}
\log_F([\pi]_F(T))-\log_F(T^q)&=\pi\log_F(T)-\log_F(T^q) \\
&=\pi\left(\log_F(T)-\frac{1}{\pi}\log_F(T^q)\right)\in \pi\ded_K[[T]].
\end{align*}
By part (iv) of the preceding corollary, for every $n\in\mathbb{N}_{>0}$, $[\pi]_F(T)\equiv T^q
\pmod{\pi\ded_K[[T]]+T^nK[[T]]}$. It follows that
\[
[\pi]_F(T)\equiv T^q\pmod{\pi\ded_K[[T]]}.
\]
Therefore $(F,[\cdot]_F)$ is a Lubin--Tate formal $\ded_K$-module.
\end{proof}

\begin{defn}
Let $M\subset\mathbb{N}_{>0}$ be a submonoid. We call $M$ \textit{monus-closed} if
\[
m,n\in M~\Longrightarrow~\frac{m}{(m,n)}\in M.
\]

Let $N_{(p)}\subset\mathbb{N}_{(p)}$ be a monus-closed submonoid, let
\[
\rho:N_{(p)}\longrightarrow (R^\ast)^{\langle\sigma\rangle}
\]
be a multiplicative map, and let $f_Ff_{\sigma/F}=f_\sigma~~(f_F,f_{\sigma/F}\in\mathbb{N}_{>0})$. Choose $1=\nu_0,\nu_1,\dots,\nu_{f_{\sigma/F}-1}\in (R^\ast)^{\langle\sigma\rangle}$, and put
\[
N=\langle p^{f_F}\rangle\times N_{(p)}.
\]
For $i\in\mathbb N$, define
\[
\Pi_i=\prod_{j=0}^{i-1}\sigma^j(\pi),
\]
and, for
\[
0\le j<f_{\sigma/F},~~ i\in\mathbb N,~~ m\in N_{(p)},
\]
define
\[
c_{p^{f_Fj}q^im}=\Pi_i\nu_j\rho(m).
\]
Finally, put
\[
\ell(T)=\sum_{n\in N}\frac{(-1)^{n-1}}{c_n}T^n\in TK[[T]].
\]
We call $\ell$ a \textit{coefficient-multiplicative normalized $\sigma$-twisted Lubin--Tate formal logarithm over $R$ associated with $\pi$}.

We also write
\[
v_q:\mathbb{N}_{>0}\longrightarrow\mathbb{N};~n\longmapsto \left\lfloor\frac{v_p(n)}{f_\sigma}\right\rfloor,
\]
\[
N_{(q)}=\{n\in N~|~q\nmid n\},
\]
and
\[
d\mid_N n~:\Longleftrightarrow~d,n\in N\ \text{and}\ d\mid n.
\]
In this case $\ds \frac{n}{d}\in N$ by the definition of the monus-closedness.

By construction,
\[
c_{qn}=\pi\sigma(c_n)~~~(n\in N).
\]
Hence
\begin{align*}
\ell(T)-\frac{1}{\pi}\sigma(\ell(T^q))&=\sum_{n\in N}\frac{(-1)^{n-1}}{c_n}T^n-\sum_{n\in N}\frac{(-1)^{n-1}}{\pi\sigma(c_n)}T^{qn} \\
&=\sum_{n\in N}\frac{(-1)^{n-1}}{c_n}T^n-\sum_{n\in N}\frac{(-1)^{qn-1}}{c_{qn}}T^{qn} \\
&=\sum_{n\in N_{(q)}}\frac{(-1)^{n-1}}{c_n}T^n\in R[[T]].
\end{align*}
Thus $\ell$ is a normalized $\sigma$-twisted Lubin--Tate formal logarithm over $R$ associated with $\pi$.

The corresponding normalized $\sigma$-twisted Lubin--Tate formal group $F$ over $R$ associated with $\pi$ is called a \textit{coefficient-multiplicative normalized $\sigma$-twisted Lubin--Tate formal group over $R$ associated with $\pi$}.

Moreover,
\[
\ell_q(T)=\sum_{i=0}^{\infty}\frac{1}{c_{q^i}}T^{q^i}\in TK[[T]]
\]
is a normalized $\sigma$-twisted Lubin--Tate formal logarithm associated with $\pi$. We denote the corresponding formal group by $F_q$.
\end{defn}

\section{Formal theory of $F$-big Witt vectors}

In this section, we develop the formal theory underlying the construction of the $F$-big Witt ring.

\subsection{Basic Transformations}

Let
\[
\mathbb{V}=\{V_n\}_{n\in N},~~\mathbb{X}=\{X_n\}_{n\in N},~~\mathbb{C}=\{C_n\}_{n\in\mathbb{N}_{>0}},
\]
\[
\mathbb{V}_q=\{V_{q,i}\}_{i\in\mathbb{N}}
\]
be families of indeterminates.

We regard $\mathbb{V}$ as a Witt vector, $\mathbb{X}$ as a family of ghost components, $\mathbb{C}$ as the family of coefficients of a Lubin--Tate curve, and $\mathbb{V}_q$ as a $q$-typical Witt vector.

\begin{defn}
Let $\widetilde{\mathbb{V}}=\{\widetilde{V}_n\}_{n\in\mathbb{N}_{>0}}$ be a family of indeterminates. We write
\[
E^F(\widetilde{\mathbb{V}};T)=\sum_{n=1}^{\infty}\widetilde{E}_n^F(\widetilde{V}_1,\dots,\widetilde{V}_n)T^n=\sum_{n=1}^{\infty}\hspace{-1mm}{{}^F} (-_F)(-\widetilde{V}_nT^n)\in R[\widetilde{\mathbb{V}}][[T]].
\]
Here and throughout, $\ds \sum\hspace{-1mm}{{}^F}$ denotes summation with respect to the formal group law $F$. We call $\widetilde{E}_n^F$ the \textit{$n$-th formal $F$-Euler product expansion polynomial}.

It follows inductively that $\widetilde{E}_n^F$ is of the form
\[
\widetilde{E}_n^F(\widetilde{V}_1,\dots,\widetilde{V}_n)=\widetilde{V}_n+\widetilde{E}_n^{\prime F}(\widetilde{V}_1,\dots,\widetilde{V}_{n-1})
\]
for some polynomial
\[
\widetilde{E}_n^{\prime F}\in R[\widetilde{V}_1,\dots,\widetilde{V}_{n-1}].
\]
We therefore define
\begin{align*}
G_1^F(C_1)&=C_1\in R[C_1], \\
G_n^F(C_1,\dots,C_n)&=C_n-\widetilde{E}_n^{\prime F}\bigl(G_i^F(C_1,\dots,C_i)\mid i=1,\dots,n-1\bigr) \\
&\in R[C_1,\dots,C_n].~~(n\ge 2)
\end{align*}
We call $G_n^F$ the \textit{$n$-th formal $F$-Euler product representation polynomial}.

By definition,
\[
\sum_{n=1}^{\infty}\hspace{-1mm}{{}^F} (-_F)(-\widetilde{V}_nT^n)=\sum_{n=1}^{\infty}\widetilde{E}_n^F(\widetilde{\mathbb{V}})T^n,
\]
and
\[
\sum_{n=1}^{\infty}\hspace{-1mm}{{}^F}(-_F)(-G_n^F(\mathbb{C})T^n)=\sum_{n=1}^{\infty}C_nT^n.
\]

In order to treat $N$-big Witt vectors, we put
\[
E_n^F(\mathbb{V})=\widetilde{E}_n^F(\mathbb{V},\{0\}_{m\notin N})~~~(n\in\mathbb{N}_{>0}).
\]
\end{defn}

\begin{defn}
Put
\[
TK[[T]]_N=\left\{\sum_{n\in N}a_nT^n~\middle|~a_n\in K\right\}.
\]
We define the \textit{formal $F$-coefficient adjustment operator} by
\[
\Theta^F:TK[[T]]_N\xrightarrow{\sim}TK[[T]]_N;~\sum_{n\in N}a_nT^n\longmapsto \sum_{n\in N}c_na_nT^n.
\]
\end{defn}

\begin{defn}
For $n\in N$, define
\[
W_n^F(V_d\mid d\mid_Nn)=\sum_{d\mid_Nn}\frac{c_n}{c_{n/d}}V_d^{n/d}\in R[V_d\mid d\mid_Nn].
\]
We call $W_n^F$ the \textit{$n$-th $F$-Witt ghost polynomial}. We also define
\begin{align*}
&U_1^F(X_1)=X_1 \\
&U_n^F(X_d\mid d\mid_Nn)=\frac{1}{c_n}X_n-\sum_{\substack{d\mid_Nn \\ d<n}}\frac{1}{c_{n/d}}U_d^F(X_{d'}\mid d'\mid_Nd)^{n/d}\in K[X_d\mid d\mid_Nn].
\end{align*}
We call $U_n^F$ the \textit{$n$-th inverse $F$-Witt ghost polynomial}.

For $i\in\mathbb{N}$, we have
\[
W_{q^i}^{F_q}(V_{q,j}~|~j\le i)=\sum_{j=0}^{i}\left(\prod_{s=1}^{j}\sigma^{i-s}(\pi)\right)V_{q,j}^{q^{i-j}},
\]
which is precisely Hazewinkel's $q$-typical ramified Witt polynomial.
\end{defn}

\begin{lem}
We have
\[
\Theta^F\log_F\left(\sum_{n\in N}\hspace{-1mm}{{}^F}(-_F)(-V_nT^n)\right)=\sum_{n\in N}W_n^F(\mathbb{V})T^n.
\]
\end{lem}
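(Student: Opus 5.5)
Since $\log_F$ is a homomorphism from $F$ to the additive group, the formal sum becomes an ordinary sum:
\[
\log_F\left(\sum_{n\in N}\hspace{-1mm}{{}^F}(-_F)(-V_nT^n)\right)=\sum_{n\in N}\log_F\bigl((-_F)(-V_nT^n)\bigr)=-\sum_{n\in N}\log_F(-V_nT^n),
\]
where we use $\log_F\circ(-_F)=-\log_F$. Next we substitute the explicit coefficient-multiplicative logarithm $\ell(T)=\sum_{m\in N}\frac{(-1)^{m-1}}{c_m}T^m$. This gives
\[
-\log_F(-V_nT^n)=-\sum_{m\in N}\frac{(-1)^{m-1}(-1)^m}{c_m}V_n^mT^{nm}=\sum_{m\in N}\frac{1}{c_m}V_n^mT^{nm}.
\]
The signs cancel identically, since $(-1)^{m-1}(-1)^m=-1$. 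In particular no parity issue arises here, although $p$ is odd in any case.

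We then regroup by the total exponent $k=nm$. Because $N$ is a monoid, $k\in N$, so the series lies in $TK[[T]]_N$ and $\Theta^F$ applies to it. Conversely, fix $k\in N$. The pairs $(n,m)$ with $n,m\in N$ and $nm=k$ correspond to the divisors $d=n$ with $d\mid_N k$. For each such $d$, the cofactor $m=k/d$ lies in $N$ by the monus-closedness remark ($d\mid_N k\Rightarrow k/d\in N$). We should check that this remark really holds for $N=\langle p^{f_F}\rangle\times N_{(p)}$: the $p$-part is a power of $p^{f_F}$ dividing another such power, and the prime-to-$p$ part follows from the monus-closedness of $N_{(p)}$, since $k/d=k'/(k',d')$ on the $N_{(p)}$-components. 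Hence
\[
-\sum_{n\in N}\log_F(-V_nT^n)=\sum_{k\in N}\Bigl(\sum_{d\mid_N k}\frac{1}{c_{k/d}}V_d^{k/d}\Bigr)T^k .
\]

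Finally, applying $\Theta^F$ multiplies the coefficient of $T^k$ by $c_k$, which yields exactly $W_k^F(\mathbb V)=\sum_{d\mid_N k}\frac{c_k}{c_{k/d}}V_d^{k/d}$. All rearrangements are legitimate because each coefficient of $T^k$ receives only finitely many contributions. The only step needing care is this bijection between factorizations $k=nm$ within $N$ and divisors $d\mid_N k$, which rests on the monus-closed hypothesis; everything else is formal.
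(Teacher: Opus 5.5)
Your proposal is correct and follows the paper's argument. You use $\log_F$ to turn the $F$-sum into an ordinary sum, expand $-\log_F(-V_nT^n)=\sum_{m\in N}c_m^{-1}V_n^mT^{nm}$, regroup by $k=nm$ using $k/d\in N$ for $d\mid_N k$, and apply $\Theta^F$; the only differences are cosmetic (the paper applies $\Theta^F$ before regrouping, and you explicitly verify the monus-closedness of $N=\langle p^{f_F}\rangle\times N_{(p)}$, which the paper takes for granted).
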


\begin{proof}
We have
\begin{align*}
&\Theta^F\log_F\left(\sum_{n\in N}\hspace{-1mm}{{}^F}(-_F)(-V_nT^n)\right)=\Theta^F\left(\sum_{n\in N}-\log_F(-V_nT^n)\right) \\
=&\Theta^F\left(\sum_{n\in N}\sum_{m\in N}\frac{V_n^m}{c_m}T^{nm}\right)=\sum_{n,m\in N}\frac{c_{mn}V_n^m}{c_m}T^{nm} \\
=&\sum_{k\in N}\left(\sum_{n\mid_Nk}\frac{c_k}{c_{k/n}}V_n^{k/n}\right)T^k=\sum_{k\in N}W_k^F(\mathbb{V})T^k.
\end{align*}
\end{proof}

\begin{defn}
For $n\in N$, define
\begin{align*}
A_n^F(V_{q,i}\mid i\le v_q(n))&=U_n^F\left(\{W_{q^i}^{F_q}(\{V_{q,j}\}_{j\le i})\}_{i\le v_q(n)},\{0\}_{d\mid_Nn,\ d\notin\langle q\rangle}\right) \\
&\in K[V_{q,i}\mid i\le v_q(n)].
\end{align*}
We call $A_n^F$ the \textit{$n$-th formal $F$-Artin--Hasse polynomial}.

By definition,
\[
W_n^F(A_d^F(\mathbb{V}_q)\mid d\mid_Nn)=\begin{cases}W_{q^i}^{F_q}(\mathbb{V}_q), & (n=q^i),
\\[3pt]0, & (n\notin\langle q\rangle).\end{cases}
\]
\end{defn}

\subsection{Basic Operations}

\begin{defn}
For $n\in N$, define
\begin{align*}
S_n^F(V_d,V_d'\mid d\mid_Nn)&=U_n^F\left(W_d^F(V_{d'}\mid d'\mid_Nd)+W_d^F(V_{d'}'\mid d'\mid_Nd)\mid d\mid_Nn\right), \\
I_n^F(V_d\mid d\mid_Nn)&=U_n^F\left(-W_d^F(V_{d'}\mid d'\mid_Nd)\mid d\mid_Nn\right), \\
P_n^F(V_d,V_d'\mid d\mid_Nn)&=U_n^F\left(W_d^F(V_{d'}\mid d'\mid_Nd)W_d^F(V_{d'}'\mid d'\mid_Nd)\mid d\mid_Nn\right).
\end{align*}
These belong respectively to
\[
K[V_d,V_d'\mid d\mid_Nn],~~K[V_d\mid d\mid_Nn],~~K[V_d,V_d'\mid d\mid_Nn].
\]

We call $S_n^F$, $I_n^F$, and $P_n^F$ the \textit{$n$-th $F$-Witt addition polynomial}, the \textit{$n$-th $F$-Witt additive inverse polynomial}, and the \textit{$n$-th $F$-Witt multiplication polynomial}, respectively.

Put
\[
\mathbf{0}=\{0\}_{n\in N},~~\mathbf{1}=(1,\{0\}_{1\neq n\in N}).
\]
Then
\begin{align*}
W^F(S^F(\mathbb{V},\mathbb{V}'))&=W^F(\mathbb{V})+W^F(\mathbb{V}'), \\
W^F(I^F(\mathbb{V}))&=-W^F(\mathbb{V}), \\
W^F(P^F(\mathbb{V},\mathbb{V}'))&=W^F(\mathbb{V})W^F(\mathbb{V}'), \\
S^F(S^F(\mathbb{V},\mathbb{V}'),\mathbb{V}'')&=S^F(\mathbb{V},S^F(\mathbb{V}',\mathbb{V}'')), \\
S^F(\mathbb{V},\mathbb{V}')&=S^F(\mathbb{V}',\mathbb{V}), \\
S^F(\mathbb{V},\mathbf{0})&=S^F(\mathbf{0},\mathbb{V})=\mathbb{V}, \\
S^F(\mathbb{V},I^F(\mathbb{V}))&=S^F(I^F(\mathbb{V}),\mathbb{V})=\mathbf{0}, \\
P^F(P^F(\mathbb{V},\mathbb{V}'),\mathbb{V}'')&=P^F(\mathbb{V},P^F(\mathbb{V}',\mathbb{V}'')), \\
P^F(\mathbb{V},\mathbb{V}')&=P^F(\mathbb{V}',\mathbb{V}), \\
P^F(\mathbb{V},\mathbf{1})&=P^F(\mathbf{1},\mathbb{V})=\mathbb{V}, \\
P^F(\mathbb{V},S^F(\mathbb{V}',\mathbb{V}''))&=S^F\bigl(P^F(\mathbb{V},\mathbb{V}'),P^F(\mathbb{V},\mathbb{V}'')\bigr).
\end{align*}

Moreover,
\[
S^F\left((\{V_n\}_{n\in M},\{0\}_{n\notin M}),(\{0\}_{n\in M},\{V_n\}_{n\notin M})\right)=\mathbb{V}
\]
for every subset $M\subset N$, and
\[
P^F((V,0,0,\dots),\mathbb{V}')=\{V^nV_n'\}.
\]

We also have
\[
A^F\bigl(S^{F_q}(\mathbb{V}_q,\mathbb{V}_q')\bigr)=S^F\bigl(A^F(\mathbb{V}_q),A^F(\mathbb{V}_q')\bigr),
\]
and
\[
A^F\bigl(P^{F_q}(\mathbb{V}_q,\mathbb{V}_q')\bigr)=P^F\bigl(A^F(\mathbb{V}_q),A^F(\mathbb{V}_q')\bigr).
\]

All these identities follow formally from the definitions. 
\end{defn}

\begin{defn}
Let $A$ be a commutative ring and let $B$ be an $A[\langle\sigma\rangle]$-algebra. We denote by $B[[\tau;\sigma]]$ the skew power series ring in the variable $\tau$ with respect to $\sigma$, namely
\[
B[[\tau;\sigma]]=\left\{\sum_{i=0}^{\infty}a_i\tau^i~\middle|~a_i\in B\right\},
\]
with addition
\[
\left(\sum_{i=0}^{\infty}a_i\tau^i\right)+\left(\sum_{i=0}^{\infty}b_i\tau^i\right)=\sum_{i=0}^{\infty}(a_i+b_i)\tau^i
\]
and multiplication
\[
\left(\sum_{i=0}^{\infty}a_i\tau^i\right)\left(\sum_{i=0}^{\infty}b_i\tau^i\right)=\sum_{i=0}^{\infty}\left(\sum_{j+k=i}a_j\sigma^j(b_k)\right)\tau^i.
\]
Thus $B[[\tau;\sigma]]$ is in general a noncommutative ring.

Let $\ds u=\sum_{j=0}^{\infty}\alpha_j\tau^j\in K[[\tau;\sigma]]$. For $n\in N$, define
\[
\operatorname{ghost}\text{-}[u]_n^F(\mathbb{X})=\sum_{j=0}^{v_q(n)}\frac{c_n}{c_{n/q^j}}\sigma^{v_q(n)-j}(\alpha_j)X_{n/q^j}\in K[X_d\mid d\mid_Nn],
\]
and
\[
[u]_n^F(\mathbb{V})=U_n^F\bigl(\operatorname{ghost}\text{-}[u]^F(W^F(\mathbb{V}))\bigr)\in K[V_d\mid d\mid_Nn].
\]
We call these respectively the \textit{$n$-th ghost $F$-skew action polynomial associated with $u$} and the \textit{$n$-th $F$-skew action polynomial associated with $u$}.

For $u,v\in K[[\tau;\sigma]]$ and $\alpha\in K$, we have
\begin{align*}
\operatorname{ghost}\text{-}[u]^F(\mathbb{X}+\mathbb{X}')&=\operatorname{ghost}\text{-}[u]^F(\mathbb{X})+\operatorname{ghost}\text{-}[u]^F(\mathbb{X}'), \\
\operatorname{ghost}\text{-}[1]^F(\mathbb{X})&=\mathbb{X}, \\
\operatorname{ghost}\text{-}[u+v]^F(\mathbb{X})&=\operatorname{ghost}\text{-}[u]^F(\mathbb{X})+\operatorname{ghost}\text{-}[v]^F(\mathbb{X}), \\
\operatorname{ghost}\text{-}[uv]^F(\mathbb{X})&=\operatorname{ghost}\text{-}[v]^F\bigl(\operatorname{ghost}\text{-}[u]^F(\mathbb{X})\bigr), \\
\operatorname{ghost}\text{-}[\alpha]^F(\mathbb{X}\mathbb{X}')&=\operatorname{ghost}\text{-}[\alpha]^F(\mathbb{X})\mathbb{X}'=\mathbb{X}\operatorname{ghost}\text{-}[\alpha]^F(\mathbb{X}'), \\
\operatorname{ghost}\text{-}[\alpha]_n^F(\mathbb{X})&=\sigma^{v_q(n)}(\alpha)X_n.
\end{align*}

Moreover,
\begin{align*}
W^F([u]^F(\mathbb{V}))&=\operatorname{ghost}\text{-}[u]^F(W^F(\mathbb{V})), \\
[u]^F(S^F(\mathbb{V},\mathbb{V}'))&=S^F([u]^F(\mathbb{V}),[u]^F(\mathbb{V}')), \\
[1]^F(\mathbb{V})&=\mathbb{V}, \\
[u+v]^F(\mathbb{V})&=S^F([u]^F(\mathbb{V}),[v]^F(\mathbb{V})), \\
[uv]^F(\mathbb{V})&=[v]^F([u]^F(\mathbb{V})), \\
[\alpha]^F(P^F(\mathbb{V},\mathbb{V}'))&=P^F([\alpha]^F(\mathbb{V}),\mathbb{V}')=P^F(\mathbb{V},[\alpha]^F(\mathbb{V}')).
\end{align*}
\end{defn}

\begin{defn}
Let $b\in N$. For $n\in N$, define
\[
V_{b,n}^F(\mathbb{V})=\begin{cases}0 & (b\nmid_Nn), \\ V_{n/b} & (b\mid_Nn),\end{cases}
\]
and
\[
F_{b,n}^F(\mathbb{V})=U_n^F\left(W_{bd}^F(V_{d'}\mid d'\mid_Nbd)\mid d\mid_Nn\right)\in K[V_d\mid d\mid_Nbn].
\]
We call $V_{b,n}^F$ the \textit{$n$-th $F$-$b$-Verschiebung polynomial} and $F_{b,n}^F$ the \textit{$n$-th $F$-$b$-Frobenius polynomial}.

For every $b\in N$ and $\alpha\in K$, we have
\begin{align*}
W^F(V_b^F(\mathbb{V}))&=\left(\left\{\frac{c_n}{c_{n/b}}W_{n/b}^F(\mathbb{V})\right\}_{b\mid_Nn},\{0\}_{b\nmid_Nn}\right), \\
V_b^F(S^F(\mathbb{V},\mathbb{V}'))&=S^F(V_b^F(\mathbb{V}),V_b^F(\mathbb{V}')), \\
V_{q^i}^F(\mathbb{V})&=[\tau^i]^F(\mathbb{V}), \\
W^F(F_b^F(\mathbb{V}))&=\{W_{bn}^F(\mathbb{V})\}, \\
F_b^F(S^F(\mathbb{V},\mathbb{V}'))&=S^F(F_b^F(\mathbb{V}),F_b^F(\mathbb{V}')), \\
F_b^F(P^F(\mathbb{V},\mathbb{V}'))&=P^F(F_b^F(\mathbb{V}),F_b^F(\mathbb{V}')),
\end{align*}
and
\[
F_{q^i}^F([\alpha]^F(\mathbb{V}))=[\sigma^i\alpha]^F(F_{q^i}^F(\mathbb{V})).
\]

For $b,b'\in N$ and $i\in\mathbb{N}$, we also have
\begin{align*}
V_{b'}^F(V_b^F(\mathbb{V}))&=V_b^F(V_{b'}^F(\mathbb{V}))=V_{bb'}^F(\mathbb{V}), \\
F_{b'}^F(F_b^F(\mathbb{V}))&=F_b^F(F_{b'}^F(\mathbb{V}))=F_{bb'}^F(\mathbb{V}), \\
P^F(\mathbb{V},V_b^F(\mathbb{V}'))&=V_b^F\bigl(P^F(F_b^F(\mathbb{V}),\mathbb{V}')\bigr),
\end{align*}
and
\[
F_{q^i}^F(V_{q^i}^F(\mathbb{V}))=[\Pi_i]^F(\mathbb{V}).
\]
\end{defn}

\subsection{Integrality of the Transformations and the Operations}

\begin{rem}
We have already seen that
\[
E_n^F,~~W_n^F,~~V_{b,n}^F
\]
are polynomials with coefficients in $R$.
\end{rem}

\begin{defn}
For $\alpha\in K$, define
\[
\operatorname{AH}\text{-}\exp_F(\alpha;T)=\exp_F\left(\sum_{i=0}^{\infty}\frac{\sigma^i(\alpha)}{c_{q^i}}T^{q^i}\right)\in TK[[T]].
\]
We call this the \textit{generalized $F$-Artin--Hasse curve}. We also put
\[
\operatorname{AH}\text{-}\exp_F(T)=\operatorname{AH}\text{-}\exp_F(1;T)=\exp_F(\log_{F_q}(T))\in T+T^2K[[T]],
\]
and call it the \textit{$F$-Artin--Hasse curve}.
\end{defn}

\begin{lem}
For every $\alpha\in R$,
\[
\operatorname{AH}\text{-}\exp_F(\alpha;T)\in TR[[T]].
\]
\end{lem}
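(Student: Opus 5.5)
We prove $\operatorname{AH}\text{-}\exp_F(\alpha;T)\in TR[[T]]$ by applying part~(iv) of the functional-equation lemma (Lemma~\ref{FEL}) with $\ell=\log_F$, $r=1$, and
\[
h(T)=\sum_{i=0}^{\infty}\frac{\sigma^i(\alpha)}{c_{q^i}}T^{q^i}\in TK[[T]].
\]
Since $\operatorname{AH}\text{-}\exp_F(\alpha;T)=\exp_F(h(T))=\ell^{-1}\circ h$, part~(iv) gives the conclusion at once, provided we check its hypothesis
\[
h(T)-\frac{1}{\pi}(\sigma h)(T^q)\in R[[T]].
\]

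For this we use the identity $c_{qn}=\pi\sigma(c_n)$ noted after the definition of coefficient-multiplicative logarithms, in the special case $n=q^i$, that is, $c_{q^{i+1}}=\pi\sigma(c_{q^i})$. Applying $\sigma$ coefficientwise, we get
\[
\frac{1}{\pi}(\sigma h)(T^q)=\sum_{i=0}^{\infty}\frac{\sigma^{i+1}(\alpha)}{\pi\sigma(c_{q^i})}T^{q^{i+1}}=\sum_{i=0}^{\infty}\frac{\sigma^{i+1}(\alpha)}{c_{q^{i+1}}}T^{q^{i+1}}.
\]
This is exactly the tail of $h$ beginning at $i=1$. The series telescope, so the difference is
\[
h(T)-\frac{1}{\pi}(\sigma h)(T^q)=\frac{\alpha}{c_1}T=\alpha T,
\]
because $c_1=\Pi_0\nu_0\rho(1)=1$. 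Here $\rho(1)=1$ since $\rho$ is multiplicative, and $\nu_0=1$. As $\alpha\in R$, the difference lies in $R[[T]]$.

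One point needs a short check: the action of $\sigma$ on $K=k\otimes_{\ded_k}R$. We take it to be the diagonal action, which extends the given action on $R$ and satisfies $\sigma(\pi)$-compatibility. This gives $\sigma(1/c_{q^i})=1/\sigma(c_{q^i})$, where $\sigma(c_{q^i})=\sigma(\Pi_i)$ uses $\nu_0=1$ and $\rho(1)=1$. We also need $\sigma$ to commute with substituting $T^q$, which is immediate.

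With these checks, Lemma~\ref{FEL}(iv) applies and gives $\exp_F(h)\in TR[[T]]$. Taking $\alpha=1$ yields in particular $\operatorname{AH}\text{-}\exp_F(T)\in TR[[T]]$. There is no real obstacle; the only step that needs care is the bookkeeping of the indices in the telescoping.
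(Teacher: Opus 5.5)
Your proof is correct and is the paper's argument: the paper also writes $c_{q^i}=\Pi_i$ and checks $h(T)-\frac{1}{\pi}(\sigma h)(T^q)=\alpha T\in R[[T]]$ by telescoping (equivalently, $c_{q^{i+1}}=\pi\sigma(c_{q^i})$), then applies Lemma~\ref{FEL}(iv) to get $\exp_F(h)\in TR[[T]]$. Your extra checks that $c_1=1$ and that $\sigma$ is compatible with inversion on $K$ are fine, and the paper leaves them implicit.
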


\begin{proof}
We have
\[
\log_F\bigl(\operatorname{AH}\text{-}\exp_F(\alpha;T)\bigr)=\sum_{i=0}^{\infty}\frac{\sigma^i(\alpha)}{c_{q^i}}T^{q^i}=\sum_{i=0}^{\infty}\frac{\sigma^i(\alpha)}{\Pi_i}T^{q^i}.
\]
Put
\[
h(T)=\sum_{i=0}^{\infty}\frac{\sigma^i(\alpha)}{\Pi_i}T^{q^i}.
\]
Then
\[
h(T)-\frac{1}{\pi}\sigma(h(T^q))=\alpha T\in R[[T]].
\]
Hence, by (iv) of Lemma~\ref{FEL},
\[
\operatorname{AH}\text{-}\exp_F(\alpha;T)=\exp_F(h(T))=\log_F^{-1}(h(T))\in TR[[T]].
\]
\end{proof}

\begin{prop}
We have
\begin{align*}
&\sum_{k\in N}\hspace{-1mm}{{}^F}(-_F)(-S_k^F(\mathbb{V},\mathbb{V}')T^k) \\
&\qquad=\left(\sum_{n\in N}\hspace{-1mm}{{}^F}(-_F)(-V_nT^n)\right)+_F\left(\sum_{n\in N}\hspace{-1mm}{{}^F}(-_F)(-V_n'T^n)\right),
\end{align*}
and
\[
\sum_{n\in N}\hspace{-1mm}{{}^F}(-_F)(-I_n^F(\mathbb{V})T^n)=(-_F)\left(\sum_{n\in N}\hspace{-1mm}{{}^F}(-_F)(-V_nT^n)\right).
\]
In particular, $S_n^F,I_n^F$ have coefficients in $R$.
\end{prop}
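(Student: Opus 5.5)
The identities will be proved by applying $\Theta^F\log_F$ to both sides and comparing. Both sides lie in $TK[[T]]_N$, because every $F$-sum of series supported on $N$ is again supported on $N$. Indeed, $N$ is a monoid, $F$-addition only produces products of monomials, and $-\log_F(-V_nT^n)=\sum_{m\in N}V_n^mT^{nm}/c_m$ is supported on $N$. Since $\log_F$ is a bijection $TK[[T]]\to TK[[T]]$ and $\Theta^F$ is a bijection of $TK[[T]]_N$, it suffices to show the images under $\Theta^F\log_F$ agree.

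By the lemma computing $\Theta^F\log_F$ of an Euler product, the left side of the first identity maps to
\[
\sum_{k\in N}W_k^F\bigl(S^F(\mathbb V,\mathbb V')\bigr)T^k .
\]
Since $\log_F$ turns $+_F$ into $+$ and $\Theta^F$ is additive, the right side maps to
\[
\sum_{k\in N}\bigl(W_k^F(\mathbb V)+W_k^F(\mathbb V')\bigr)T^k .
\]
These agree by $W^F(S^F(\mathbb V,\mathbb V'))=W^F(\mathbb V)+W^F(\mathbb V')$. The inverse identity follows in the same way, using $\log_F((-_F)x)=-\log_F(x)$ and $W^F(I^F(\mathbb V))=-W^F(\mathbb V)$. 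Both identities hold over $K[\mathbb V,\mathbb V']$, where $\Theta^F$ and $\log_F$ make sense; we tensor $R$ with the polynomial ring, which stays $\pi$-torsion free.

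For integrality, we induct on $k\in N$ in the divisibility order, or simply on $k$. Suppose $S_d^F\in R[\ldots]$ for all $d\in N$ with $d<k$. The right-hand side of the first identity lies in $TR[\mathbb V,\mathbb V'][[T]]$, because $F$ is defined over $R$ by Corollary~\ref{FEL-cor}(i), and both $-_F$ and the Euler products are integral. Now expand the left side modulo $T^{k+1}$. The term $(-_F)(-S_k^FT^k)$ contributes $S_k^FT^k$ plus terms of degree at least $2k$. Every other contribution in degree $k$ is a polynomial with $R$-coefficients in the $S_d^F$ with $d<k$. This is precisely the structure $\widetilde E_k^F=\widetilde V_k+\widetilde E_k^{\prime F}(\widetilde V_1,\ldots,\widetilde V_{k-1})$ recorded in the definition of the Euler product polynomials. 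Comparing the coefficients of $T^k$ then gives
\[
S_k^F=\bigl(\text{coefficient of }T^k\text{ on the right}\bigr)-E_k^{\prime F}\bigl(S_d^F\mid d<k\bigr)\in R[\mathbb V,\mathbb V'].
\]
The same argument applies to $I_k^F$.

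We expect no serious obstacle. The only point needing care is the bookkeeping: checking that $\Theta^F$ is applied only to series supported on $N$, and that the triangular structure of $\widetilde E^F$ survives when the indices outside $N$ are set to zero. Equivalently, we can write the left side as $\sum_k \widetilde E_k^F(S^F,\{0\}_{m\notin N})T^k$, which is integral and triangular by construction.
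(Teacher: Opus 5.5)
Your proof is correct and is essentially the paper's: you apply $\Theta^F\log_F$, use $W^F(S^F(\mathbb V,\mathbb V'))=W^F(\mathbb V)+W^F(\mathbb V')$ and $W^F(I^F(\mathbb V))=-W^F(\mathbb V)$ with injectivity, and get integrality from the triangular form $\widetilde E_k^F=\widetilde V_k+\widetilde E_k^{\prime F}(\widetilde V_1,\dots,\widetilde V_{k-1})$, which is exactly what the paper packages as the integral polynomials $G_n^F$. One justification is wrong, though harmlessly: the Euler products themselves are generally not supported on $N$, because $N$ is closed under multiplication while products of monomials add exponents (for $N=\langle q\rangle$, $V_1T+_FV_qT^q$ has the nonzero term $-\frac{q}{\pi}V_1V_q^{q-1}T^{q^2-q+1}$), so what you really need is only that their $F$-logarithms lie in $TK[[T]]_N$ (which follows from $\log_F$ turning $F$-sums into sums plus your formula for $-\log_F(-V_nT^n)$), and by the same phenomenon the induction must run over the usual order on $k$, as your hypothesis ``$d<k$'' does, not divisibility, since $\widetilde E_k^{\prime F}$ involves $S_d^F$ for non-divisors $d<k$.
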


\begin{proof}
We have
\begin{align*}
&\Theta^F\log_F\left(\sum_{k\in N}\hspace{-1mm}{{}^F}(-_F)(-S_k^F(\mathbb{V},\mathbb{V}')T^k)\right)=\sum_{n\in N}
W_n^F(S^F(\mathbb{V},\mathbb{V}'))T^n \\
=&\sum_{n\in N}W_n^F(\mathbb{V})T^n+\sum_{m\in N}W_m^F(\mathbb{V}')T^m \\
=&\Theta^F\log_F\left(\left(\sum_{n\in N}\hspace{-1mm}{{}^F}(-_F)(-V_nT^n)\right)+_F\left(\sum_{m\in N}\hspace{-1mm}{{}^F}(-_F)(-V_m'T^m)\right)\right).
\end{align*}
Similarly,
\begin{align*}
&\Theta^F\log_F\left(\sum_{k\in N}\hspace{-1mm}{{}^F}(-_F)(-I_k^F(\mathbb{V})T^k)\right)=\sum_{n\in N}W_n^F(I^F(\mathbb{V}))T^n \\
=&-\sum_{n\in N}W_n^F(\mathbb{V})T^n=\Theta^F\log_F\left((-_F)\left(\sum_{n\in N}\hspace{-1mm}{{}^F}(-_F)(-V_nT^n)\right)\right).
\end{align*}
Since $\Theta^F\log_F$ is injective, the asserted identities follow.

The right-hand sides belong to $R[\mathbb{V},\mathbb{V}'][[T]]$. The integrality of the formal $F$-Euler product representation polynomials therefore implies that $S_n^F$ and $I_n^F$ have coefficients in $R$.
\end{proof}

\begin{prop}
We have
\[
\sum_{n\in N}\hspace{-1mm}{{}^F}(-_F)(-A_n^F(\mathbb{V}_q)T^n)=\sum_{i=0}^{\infty}\hspace{-1mm}{{}^F}\operatorname{AH}\text{-}\exp_F(V_{q,i}T^{q^i}).
\]
In particular, $A_n^F$ has coefficients in $R$.
\end{prop}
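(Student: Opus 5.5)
We prove the identity by applying the injective map $\Theta^F\log_F$ to both sides and comparing the results, exactly as in the preceding proposition for $S^F$ and $I^F$. Integrality then follows from the integrality of the generalized Artin--Hasse curves.

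For the left-hand side, the lemma computing $\Theta^F\log_F$ of an Euler product (applied with $\mathbb V$ replaced by $A^F(\mathbb V_q)$) gives
\[
\Theta^F\log_F\left(\sum_{n\in N}\hspace{-1mm}{{}^F}(-_F)(-A_n^F(\mathbb{V}_q)T^n)\right)=\sum_{n\in N}W_n^F(A^F(\mathbb V_q))T^n .
\]
By the defining property of the formal $F$-Artin--Hasse polynomials, this equals $\sum_{i\ge0}W_{q^i}^{F_q}(\mathbb V_q)T^{q^i}$.

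For the right-hand side, $\log_F$ turns $F$-sums into ordinary sums. By the definition of $\operatorname{AH}\text{-}\exp_F$ with $\alpha=1$ we have $\log_F(\operatorname{AH}\text{-}\exp_F(X))=\log_{F_q}(X)$. Hence
\[
\log_F\left(\sum_{i}\hspace{-1mm}{{}^F}\operatorname{AH}\text{-}\exp_F(V_{q,i}T^{q^i})\right)=\sum_{i\ge0}\sum_{j\ge0}\frac{V_{q,i}^{q^j}}{c_{q^j}}T^{q^{i+j}}.
\]
Applying $\Theta^F$ multiplies the coefficient of $T^{q^k}$ by $c_{q^k}$. The result is
\[
\sum_{k}\Bigl(\sum_{i=0}^{k}\frac{c_{q^k}}{c_{q^{k-i}}}V_{q,i}^{q^{k-i}}\Bigr)T^{q^k}.
\]
The inner sum is $W^{F_q}_{q^k}(\mathbb V_q)$, since it is the $F_q$-ghost polynomial with $c_{q^k}/c_{q^{k-i}}$ and exponent $q^{k-i}$. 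Here we use that the constants $c_{q^k}=\Pi_k$ are the same for $F$ and $F_q$. The two sides therefore agree, and injectivity of $\Theta^F\log_F$ on $TK[[T]]$ (or on $TK[[T]]_N$ after noting both sides are supported on $N$) yields the identity.

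For integrality, the preceding lemma gives $\operatorname{AH}\text{-}\exp_F(\alpha;T)\in TR[[T]]$ for $\alpha\in R$. Since these series are formed functorially, we apply this over the ring $R[\mathbb V_q]$. That ring is still a $\pi$-torsion-free $\mathfrak o_k[\langle\sigma\rangle]$-algebra with $\sigma$ lifting Frobenius, once $\sigma$ is extended by $V_{q,i}\mapsto V_{q,i}^q$. Alternatively, we substitute $T\mapsto V_{q,i}T^{q^i}$ into the integral series $\operatorname{AH}\text{-}\exp_F(T)\in TR[[T]]$. Either way each term lies in $R[\mathbb V_q][[T]]$, and so does their $F$-sum, because $F$ has coefficients in $R$.

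The right-hand side is thus a power series $\sum_n C_nT^n$ with $C_n\in R[\mathbb V_q]$. By the definition of the Euler product representation polynomials, $A_n^F=G_n^F(\mathbb C)$ for $n\in N$ with these $C_n$. We also need $G_n^F(\mathbb C)=0$ for $n\notin N$, which follows from uniqueness of the representation, since the left side only involves $n\in N$. As $G_n^F$ has coefficients in $R$, we get $A_n^F\in R[\mathbb V_q]$.

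The main subtle point is this last step: checking that the Euler product representation is unique, so that the coefficients $A_n^F$ are recovered as $G_n^F(\mathbb C)$. This is guaranteed by the triangular form $\widetilde E_n^F=\widetilde V_n+\widetilde E_n'^F(\widetilde V_1,\dots,\widetilde V_{n-1})$.
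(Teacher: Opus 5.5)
Your proof is correct and is essentially the paper's argument: apply the injective map $\Theta^F\log_F$, identify both sides with $\sum_{i\ge0}W_{q^i}^{F_q}(\mathbb V_q)T^{q^i}$, and deduce integrality of $A_n^F$ from $A_n^F=G_n^F(\mathbb C)$ with $C_n\in R[\mathbb V_q]$. Of your two integrality routes, use the substitution $T\mapsto V_{q,i}T^{q^i}$ into $\operatorname{AH}\text{-}\exp_F(T)\in TR[[T]]$: extending $\sigma$ by $V_{q,i}\mapsto V_{q,i}^q$ gives only an endomorphism, not an action of the finite group $\langle\sigma\rangle$. The substitution route also supplies the detail the paper leaves implicit when it asserts that the right-hand side lies in $R[\mathbb V_q][[T]]$.
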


\begin{proof}
We have
\begin{align*}
&\Theta^F\log_F\left(\sum_{n\in N}\hspace{-1mm}{{}^F}(-_F)(-A_n^F(\mathbb{V}_q)T^n)\right)=\sum_{n\in N}W_n^F(A^F(\mathbb{V}_q))T^n=\sum_{i=0}^{\infty}W_{q^i}^{F_q}(\mathbb{V}_q)T^{q^i} \\
=&\sum_{i=0}^{\infty}\left(\sum_{j=0}^{i}\frac{c_{q^i}}{c_{q^{i-j}}}V_{q,j}^{q^{i-j}}\right)T^{q^i}=\sum_{j=0}^{\infty}\sum_{i=j}^{\infty}\frac{c_{q^i}}{c_{q^{i-j}}}V_{q,j}^{q^{i-j}}T^{q^i}=\sum_{j=0}^{\infty}\sum_{k=0}^{\infty}\frac{c_{q^{j+k}}}{c_{q^k}}V_{q,j}^{q^k}T^{q^{j+k}} \\
=&\Theta^F\left(\sum_{j=0}^{\infty}\sum_{k=0}^{\infty}\frac{1}{c_{q^k}}(V_{q,j}T^{q^j})^{q^k}\right)=\Theta^F\left(\sum_{j=0}^{\infty}\log_F\exp_F\left(\sum_{k=0}^{\infty}\frac{1}{c_{q^k}}(V_{q,j}T^{q^j})^{q^k}\right)\right) \\
=&\Theta^F\log_F\left(\sum_{j=0}^{\infty}\hspace{-1mm}{{}^F}\operatorname{AH}\text{-}\exp_F(V_{q,j}T^{q^j})\right).
\end{align*}
Since $\Theta^F\log_F$ is injective,
\[
\sum_{n\in N}\hspace{-1mm}{{}^F}(-_F)(-A_n^F(\mathbb{V}_q)T^n)=\sum_{i=0}^{\infty}{}^F\operatorname{AH}\text{-}\exp_F(V_{q,i}T^{q^i}).
\]

The right-hand side belongs to $R[\mathbb{V}_q][[T]]$. Hence the integrality of the formal $F$-Euler product representation polynomials implies that $A_n^F$ has coefficients in $R$.
\end{proof}

\begin{prop}
For $b\in N$, we have
\begin{align*}
&\sum_{n\in N}\hspace{-1mm}{{}^F}(-_F)(-F_{b,n}^F(\mathbb{V})T^n) \\
=&\sum_{n\in N}\hspace{-1mm}{{}^F}\sum_{m\in N_{(q)}}\hspace{-3mm}{{}^F}\operatorname{AH}\text{-}\exp_F\left(\frac{c_{bnm/(b,n)}}{c_{bm/(b,n)}c_{nm/(b,n)}};\left(V_n^{b/(b,n)}T^{n/(b,n)}\right)^m\right).
\end{align*}
In particular, $F_{b,n}^F$ has coefficients in $R$.
\end{prop}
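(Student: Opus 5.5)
The plan is to follow the pattern of the two preceding propositions. First apply the injective operator $\Theta^F\log_F$ to both sides and compare the resulting ghost-type series coefficient by coefficient. Then deduce integrality from the lemma on generalized $F$-Artin--Hasse curves together with the integrality of the formal $F$-Euler product representation polynomials.

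\emph{Left-hand side.} By the lemma computing $\Theta^F\log_F$ of an $F$-Euler product and the identity $W^F(F_b^F(\mathbb{V}))=\{W_{bn}^F(\mathbb{V})\}$, we get
\[
\Theta^F\log_F(\mathrm{LHS})=\sum_{n\in N}W_{bn}^F(\mathbb{V})T^n=\sum_{n\in N}\sum_{d\mid_N bn}\frac{c_{bn}}{c_{bn/d}}V_d^{bn/d}T^n .
\]
Fix $d\in N$ and put $g=(b,d)$. The condition $d\mid bn$ is equivalent to $(d/g)\mid n$, so the terms involving $V_d$ are indexed by $n=dk/g$ with $k\in N$. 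Monus-closedness guarantees that $d/g$ and $b/g$ lie in $N$. The term with $n=dk/g$ is
\[
\frac{c_{bdk/g}}{c_{bk/g}}\,V_d^{bk/g}\,T^{dk/g}.
\]

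\emph{Right-hand side.} By definition, $\log_F\operatorname{AH}\text{-}\exp_F(\alpha;X)=\sum_i\sigma^i(\alpha)X^{q^i}/c_{q^i}$. The signs $(-1)^{n-1}$ play no role here, exactly as in the earlier computation. Every $k\in N$ is uniquely of the form $k=q^im$ with $m\in N_{(q)}$. Hence the $V_d$-part of $\Theta^F\log_F(\mathrm{RHS})$ has coefficient of $V_d^{bk/g}T^{dk/g}$ equal to
\[
c_{dk/g}\,\frac{\sigma^i(\alpha_{d,m})}{c_{q^i}},\qquad \alpha_{d,m}=\frac{c_{bdm/g}}{c_{bm/g}\,c_{dm/g}} .
\]

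\emph{Comparison.} The key formal fact is
\[
c_{q^ix}=\Pi_i\,\sigma^i(c_x)\qquad (x\in N).
\]
This holds because $c_{p^{f_Fj}q^{i'}m}=\Pi_{i'}\nu_j\rho(m)$, because $\Pi_{i+i'}=\Pi_i\sigma^i(\Pi_{i'})$, and because $\nu_j$ and $\rho(m)$ are $\sigma$-invariant. Since $c_{q^i}=\Pi_i$, we have $c_{dk/g}/c_{q^i}=\sigma^i(c_{dm/g})$. Therefore the right-hand coefficient equals
\[
\sigma^i\!\left(\frac{c_{bdm/g}}{c_{bm/g}}\right)=\frac{c_{bdk/g}}{c_{bk/g}},
\]
which matches the left-hand coefficient. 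Summing over $d$ gives equality of the images, and injectivity of $\Theta^F\log_F$ gives the identity of $F$-Euler products.

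\emph{Integrality, the main obstacle.} It remains to show $\alpha_{d,m}\in R$; the AH lemma needs only $\alpha\in R$, not $\sigma$-invariance. The $\rho$-factors cancel, since $\rho$ is multiplicative on $N_{(p)}$ and $\mathrm{lcm}\cdot m$ has prime-to-$p$ part equal to the product of those of $bm/g$ and $dm/g$ divided by $m$. Each $\nu$-factor is a unit. For the $\Pi$-factors, the $p$-parts of $b/g$ and $d/g$ are coprime, so one of them has $p$-adic valuation $0$. Hence one of $c_{bm/g},c_{dm/g}$ contributes $\Pi_0=1$, and the other has $q$-exponent $c\le a:=v_q(bdm/g)$. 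Then $\Pi_a/\Pi_c=\sigma^c(\Pi_{a-c})\in R$.

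The delicate point is the bookkeeping of the $p^{f_Fj}$ parts and the units $\nu_j$ when $f_{\sigma/F}>1$. In that case $v_q$ takes floors and the $\nu$-index of the lcm need not be the sum of the indices. However, every $\nu_j$ lies in $R^\ast$, so only the $\Pi$-exponents matter, and the floor function is monotone, so $c\le a$ still holds. Once $\alpha_{d,m}\in R$, the lemma on generalized $F$-Artin--Hasse curves puts the right-hand side in $R[\mathbb{V}][[T]]$. The integrality of $G_n^F$ then yields $F_{b,n}^F\in R[\mathbb{V}]$.
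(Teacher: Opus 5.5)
Your proposal is correct and follows essentially the same route as the paper. You apply $\Theta^F\log_F$, reindex the terms of $W_{bn}^F$ by $n=dk/(b,d)$ with $k=q^im$, use $c_{q^ix}=\Pi_i\sigma^i(c_x)$ to recognize the Artin--Hasse series, and conclude by injectivity and the integrality of the $G_n^F$. Your explicit verification that $\frac{c_{bdm/(b,d)}}{c_{bm/(b,d)}c_{dm/(b,d)}}\in R$ supplies a step the paper only asserts.

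One harmless slip there: the $\rho$-factors do not cancel in general. For $b=d$, $m=1$ the coefficient is $c_b$, which retains $\rho$ of the prime-to-$p$ part of $b$. However, $\rho$ takes values in $R^\ast$, so these factors are units and your conclusion stands.
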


\begin{proof}
We have
\begin{align*}
&\Theta^F\log_F\left(\sum_{n\in N}\hspace{-1mm}{{}^F}(-_F)(-F_{b,n}^F(\mathbb{V})T^n)\right)=\sum_{n\in N}W_n^F(F_b^F(\mathbb{V}))T^n=\sum_{n\in N}W_{bn}^F(\mathbb{V})T^n \\
=&\sum_{n\in N}\sum_{d\mid_Nbn} \frac{c_{bn}}{c_{bn/d}}V_d^{bn/d}T^n=\sum_{d\in N}\sum_{\frac{d}{(b,d)}\mid_Nn}\frac{c_{bn}}{c_{bn/d}}V_d^{bn/d}T^n \\
=&\sum_{d\in N}\sum_{k\in N}\frac{c_{bdk/(b,d)}}{c_{bk/(b,d)}}V_d^{bk/(b,d)}T^{dk/(b,d)} \\
=&\sum_{d\in N}\sum_{m\in N_{(q)}}\sum_{i=0}^{\infty}\frac{c_{bdq^im/(b,d)}}{c_{bq^im/(b,d)}}\left(V_d^{b/(b,d)}T^{d/(b,d)}\right)^{q^im} \\
=&\sum_{d\in N}\sum_{m\in N_{(q)}}\sum_{i=0}^{\infty}c_{dq^im/(b,d)}\frac{c_{bdq^im/(b,d)}}{c_{bq^im/(b,d)}c_{dq^im/(b,d)}}\left(V_d^{b/(b,d)}T^{d/(b,d)}\right)^{q^im} \\
=&\sum_{d\in N}\sum_{m\in N_{(q)}}\sum_{i=0}^{\infty}c_{dq^im/(b,d)}\frac{\sigma^i\left(\frac{c_{bdm/(b,d)}}{c_{bm/(b,d)}c_{dm/(b,d)}}
\right)}{\Pi_i}\left(V_d^{b/(b,d)}T^{d/(b,d)}\right)^{q^im} \\
=&\Theta^F\left(\sum_{d\in N}\sum_{m\in N_{(q)}}\log_F\exp_F\left(\sum_{i=0}^{\infty}\frac{\sigma^i\left(\frac{c_{bdm/(b,d)}}{c_{bm/(b,d)}c_{dm/(b,d)}}\right)}{\Pi_i}\left(V_d^{b/(b,d)}T^{d/(b,d)}\right)^{q^im}\right)\right) \\
=&\Theta^F\log_F\left(\sum_{d\in N}\hspace{-1mm}{{}^F}\sum_{m\in N_{(q)}}\hspace{-3mm}{{}^F}\operatorname{AH}\text{-}\exp_F\left(\frac{c_{bdm/(b,d)}}{c_{bm/(b,d)}c_{dm/(b,d)}};\left(V_d^{b/(b,d)}T^{d/(b,d)}\right)^m\right)\right).
\end{align*}
Since $\Theta^F\log_F$ is injective, the asserted identity follows.

Furthermore,
\[
\frac{c_{bdm/(b,d)}}{c_{bm/(b,d)}c_{dm/(b,d)}}\in R~~~(d\in N,\ m\in N_{(q)}).
\]
Hence the right-hand side belongs to $R[\mathbb{V}][[T]]$, and the integrality of the formal $F$-Euler product representation polynomials implies that $F_{b,n}^F$ has coefficients in $R$.
\end{proof}

\begin{prop}
We have
\begin{align*}
&\sum_{n\in N}\hspace{-1mm}{{}^F}(-_F)(-P_n^F(\mathbb{V},\mathbb{V}')T^n) \\
=&\sum_{m,n\in N}\hspace{-3mm}{{}^F}\sum_{k\in N_{(q)}}\hspace{-3mm}{{}^F}\operatorname{AH}\text{-}\exp_F\left(\frac{c_{mnk/(m,n)}}{c_{mk/(m,n)}c_{nk/(m,n)}};\left(V_m^{n/(m,n)}{V_n'}^{m/(m,n)}T^{mn/(m,n)}\right)^k\right).
\end{align*}
In particular, $P_n^F$ has coefficients in $R$.
\end{prop}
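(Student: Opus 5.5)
The proof follows the pattern of the Frobenius proposition. Apply $\Theta^F\log_F$ to both sides, compare ghost series, then use injectivity of $\Theta^F\log_F$ on $TK[[T]]_N$.

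On the left, the ghost-map lemma together with the defining identity $W^F(P^F(\mathbb V,\mathbb V'))=W^F(\mathbb V)W^F(\mathbb V')$ give
\[
\Theta^F\log_F(\text{LHS})=\sum_{\ell\in N}W_\ell^F(\mathbb V)W_\ell^F(\mathbb V')T^\ell=\sum_{\ell\in N}\sum_{m\mid_N\ell}\sum_{n\mid_N\ell}\frac{c_\ell^2}{c_{\ell/m}c_{\ell/n}}V_m^{\ell/m}{V_n'}^{\ell/n}T^\ell .
\]
Next reindex by the pair $(m,n)$. Both $m$ and $n$ divide $\ell$ exactly when $\mathrm{lcm}(m,n)=mn/(m,n)$ divides $\ell$. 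So write $\ell=\frac{mn}{(m,n)}j$ with $j\in N$; this uses the monus-closedness of $N$ to ensure $j\in N$, along with $mn/(m,n)\in N$. Then $\ell/m=\frac{n}{(m,n)}j$ and $\ell/n=\frac{m}{(m,n)}j$, so the summand becomes
\[
\frac{c_{mnj/(m,n)}^2}{c_{nj/(m,n)}c_{mj/(m,n)}}\left(V_m^{n/(m,n)}{V_n'}^{m/(m,n)}T^{mn/(m,n)}\right)^{j}.
\]
Now split $j=q^ik$ with $k\in N_{(q)}$. Coefficient-multiplicativity (the relation $c_{qx}=\pi\sigma(c_x)$, iterated to $c_{q^ix}=\Pi_i\sigma^i(c_x)$) gives
\[
\frac{c_{mnq^ik/(m,n)}}{c_{mq^ik/(m,n)}c_{nq^ik/(m,n)}}=\frac{1}{\Pi_i}\,\sigma^i\!\left(\frac{c_{mnk/(m,n)}}{c_{mk/(m,n)}c_{nk/(m,n)}}\right).
\]
Multiplying by the extra factor $c_{mnq^ik/(m,n)}$ and recognising it as the $\Theta^F$-weight of the monomial $T^{mnq^ik/(m,n)}$, the inner sum over $i$ is $\Theta^F$ applied to $\log_F\operatorname{AH}\text{-}\exp_F(\beta_{m,n,k};X^k)$. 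Here $X=V_m^{n/(m,n)}{V_n'}^{m/(m,n)}T^{mn/(m,n)}$ and $\beta_{m,n,k}$ is the displayed ratio. Summing over $m,n,k$ and turning the ordinary sum of logarithms into the $F$-sum gives the right-hand side. Injectivity of $\Theta^F\log_F$ then yields the identity.

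For integrality of $P^F_n$, it suffices that $\beta_{m,n,k}\in R$. The generalized Artin--Hasse lemma then puts each factor in $TR[[T]]$, so the whole right-hand side lies in $R[\mathbb V,\mathbb V'][[T]]$. The integrality of the formal $F$-Euler product representation polynomials $G^F$ then gives $P_n^F\in R[\mathbb V,\mathbb V']$, exactly as in the preceding propositions.

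The main obstacle is checking $\beta_{m,n,k}\in R$, which is needed both here and in the Frobenius proposition. I would argue directly from $c_{p^{f_Fj}q^im'}=\Pi_i\nu_j\rho(m')$:
\begin{itemize}
\item The $\rho$-parts cancel by multiplicativity of $\rho$, because the prime-to-$p$ parts of the indices satisfy $\mathrm{lcm}\cdot\gcd$-type identities. Concretely, write $a=m/(m,n)$ and $b=n/(m,n)$, which are coprime, so the indices are $abk'$, $ak'$, $bk'$ with $k'=(m,n)k$. The ratio $\rho(abk')/(\rho(ak')\rho(bk'))$ is $1/\rho(k')$, a unit.
\item The $\nu$-parts contribute only units.
\item For the $\Pi$-parts, coprimality of $a$ and $b$ means at most one of $a,b$ is divisible by $p$. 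So the $q$-exponent of $abk'$ differs from the larger of the exponents of $ak'$ and $bk'$ by at most a carry coming from the $p^{f_Fj}$-digits. The ratio of $\Pi$'s is then of the form $\Pi_{i+\epsilon}/(\Pi_{i_1}\Pi_{i_2})$ with $i+\epsilon\ge i_1$.
\end{itemize}
This shows integrality up to a factor $1/\Pi_{i_2}$ where $i_2=v_q$ of the smaller index. Since $k'$ is not assumed prime to $q$ here, I expect one must use that $k\in N_{(q)}$ and that $(m,n)$ contributes equally to both denominators. A careful bookkeeping of $v_q$ is the delicate point, and it is where I anticipate needing the paper's specific choice of $c_n$.
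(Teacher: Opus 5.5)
Your derivation of the identity matches the paper's proof step for step: you pass to ghost series via $W^F(P^F)=W^F(\mathbb V)W^F(\mathbb V')$, reindex by $\ell=\frac{mn}{(m,n)}j$, split $j=q^ik$, use $c_{q^ix}=\Pi_i\sigma^i(c_x)$, recognise $\Theta^F\log_F$ of the Artin--Hasse curves, and conclude by injectivity.

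\textbf{The gap.} The gap is in the ``in particular'' integrality claim. It rests entirely on $\beta_{m,n,k}\in R$, which you leave unproved, and your sketch for it uses the wrong indices. Write $g=(m,n)$, $a=m/g$, $b=n/g$. The numerator index is indeed $mnk/(m,n)=gabk=abk'$. The denominator indices, however, are $mk/(m,n)=ak$ and $nk/(m,n)=bk$, not $ak'$ and $bk'$; your own earlier computation $\ell/m=\frac{n}{(m,n)}j$ already shows this. The expression you actually analysed, $c_{abk'}/(c_{ak'}c_{bk'})$, is not integral in general. If $p\nmid b$, then $v_q(abk')=v_q(ak')$, so its $\Pi$-part is exactly $1/\Pi_{v_q(k')}$, and $v_q(k')\ge 1$ as soon as, e.g., $q\mid(m,n)$. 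That is precisely the stray factor $1/\Pi_{i_2}$ you could not remove, so no amount of $v_q$-bookkeeping on those indices would close the argument.

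\textbf{The repair.} With the correct indices the check is short and uses two facts.
\begin{enumerate}
\item If $x\mid_N y$, then $c_y/c_x\in R$. The $\nu$- and $\rho$-factors are units, and $v_q(x)\le v_q(y)$ gives $\Pi_{v_q(y)}/\Pi_{v_q(x)}=\prod_{l=v_q(x)}^{v_q(y)-1}\sigma^l(\pi)\in R$.
\item If $x\in N_{(q)}$, then $c_x\in R^\ast$, because $\Pi_{v_q(x)}=\Pi_0=1$.
\end{enumerate}
Since $\beta_{m,n,k}$ is symmetric in $a,b$ and $(a,b)=1$, you may assume $p\nmid b$. Then $q\nmid bk$ because $k\in N_{(q)}$, so $c_{bk}\in R^\ast$. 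Moreover $ak\mid_N gabk$, by monus-closedness. Hence
\[
\beta_{m,n,k}=\frac{c_{gabk}}{c_{ak}}\,c_{bk}^{-1}\in R.
\]
The paper simply asserts this integrality without argument, both here and in the Frobenius proposition, where the identical argument works. Once it is supplied, your concluding steps via the Artin--Hasse integrality lemma and the integrality of the polynomials $G^F$ are exactly the paper's.
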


\begin{proof}
We have
\begin{align*}
&\Theta^F\log_F\left(\sum_{n\in N}\hspace{-1mm}{{}^F}(-_F)(-P_n^F(\mathbb{V},\mathbb{V}')T^n)\right)=\sum_{n\in N}W_n^F(\mathbb{V})W_n^F(\mathbb{V}')T^n \\
=&\sum_{n\in N}\left(\sum_{d\mid_Nn}\frac{c_n}{c_{n/d}}V_d^{n/d}\right)\left(\sum_{d'\mid_Nn}\frac{c_n}{c_{n/d'}}{V_{d'}'}^{n/d'}\right)T^n \\
=&\sum_{n\in N}\sum_{d,d'\mid_Nn}c_n\frac{c_n}{c_{n/d}c_{n/d'}}V_d^{n/d}{V_{d'}'}^{n/d'}T^n \\
=&\sum_{d,d'\in N}\sum_{\frac{dd'}{(d,d')}\mid_Nn}c_n\frac{c_n}{c_{n/d}c_{n/d'}}V_d^{n/d}{V_{d'}'}^{n/d'}T^n \\
=&\sum_{d,d'\in N}\sum_{k\in N}c_{dd'k/(d,d')}\frac{c_{dd'k/(d,d')}}{c_{d'k/(d,d')}c_{dk/(d,d')}}\left(V_d^{d'/(d,d')}{V_{d'}'}^{d/(d,d')}T^{dd'/(d,d')}\right)^k \\
&=\sum_{d,d'\in N}\sum_{m\in N_{(q)}}\sum_{i=0}^{\infty}c_{dd'q^im/(d,d')}\frac{c_{dd'q^im/(d,d')}}{c_{d'q^im/(d,d')}c_{dq^im/(d,d')}} \\
&\qquad\qquad\qquad\qquad\qquad\qquad\times\left(V_d^{d'/(d,d')}{V_{d'}'}^{d/(d,d')}T^{dd'/(d,d')}\right)^{q^im} \\
&=\sum_{d,d'\in N}\sum_{m\in N_{(q)}}\sum_{i=0}^{\infty}c_{dd'q^im/(d,d')}\frac{\sigma^i\left(\frac{c_{dd'm/(d,d')}}{c_{d'm/(d,d')}c_{dm/(d,d')}}\right)}{\Pi_i} \\
&\qquad\qquad\qquad\qquad\qquad\qquad\times\left(V_d^{d'/(d,d')}{V_{d'}'}^{d/(d,d')}T^{dd'/(d,d')}\right)^{q^im} \\
&=\Theta^F\left(\sum_{d,d'\in N}\sum_{m\in N_{(q)}}\log_F\exp_F\left(\sum_{i=0}^{\infty}\frac{\sigma^i\left(\frac{c_{dd'm/(d,d')}}{c_{d'm/(d,d')}c_{dm/(d,d')}}\right)}{\Pi_i}\right.\right.\\
&\qquad\qquad\qquad\qquad\qquad\qquad\left.\left.\times\left(V_d^{d'/(d,d')}{V_{d'}'}^{d/(d,d')}T^{dd'/(d,d')}\right)^{q^im}\right)\right) \\
&=\Theta^F\log_F\left(\sum_{d,d'\in N}\hspace{-3mm}{{}^F}\sum_{m\in N_{(q)}}\hspace{-3mm}{{}^F}\operatorname{AH}\text{-}\exp_F\left(\frac{c_{dd'm/(d,d')}}{c_{d'm/(d,d')}c_{dm/(d,d')}};\right.\right. \\
&\qquad\qquad\qquad\qquad\qquad\qquad\qquad\left.\left.\left(V_d^{d'/(d,d')}{V_{d'}'}^{d/(d,d')}T^{dd'/(d,d')}\right)^m\right)\right).
\end{align*}
Since $\Theta^F\log_F$ is injective, the asserted identity follows.

Moreover,
\[
\frac{c_{dd'm/(d,d')}}{c_{d'm/(d,d')}c_{dm/(d,d')}}\in R~~~(d,d'\in N,\ m\in N_{(q)}).
\]
Hence the right-hand side belongs to $R[\mathbb{V},\mathbb{V}'][[T]]$, and the integrality of the formal $F$-Euler product representation polynomials implies that $P_n^F$ has coefficients in $R$.
\end{proof}

\begin{prop}
For every $\alpha\in R$,
\[
\sum_{n\in N}\hspace{-1mm}{{}^F}(-_F)(-[\alpha]_n^F(\mathbb{V})T^n)=\sum_{n\in N}\hspace{-1mm}{{}^F}\sum_{m\in N_{(q)}}\hspace{-3mm}{{}^F}\operatorname{AH}\text{-}\exp_F\left(\frac{\sigma^{v_q(mn)}(\alpha)}{c_m};(V_nT^n)^m\right).
\]
In particular, for every $u\in R[[\tau;\sigma]]$, the polynomial $[u]_n^F$ has coefficients in $R$.
\end{prop}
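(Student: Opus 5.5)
The plan follows the template of the preceding propositions: apply the injective operator $\Theta^F\log_F$ to both sides, compare the resulting ghost generating series, and then invoke the integrality of the formal $F$-Euler product representation polynomials.

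\textbf{Left-hand side.} By the lemma on $\Theta^F\log_F$ and the identity $W^F([u]^F(\mathbb V))=\operatorname{ghost}\text{-}[u]^F(W^F(\mathbb V))$, the left side maps to
\[
\sum_{k\in N}\operatorname{ghost}\text{-}[\alpha]_k^F(W^F(\mathbb V))T^k=\sum_{k\in N}\sigma^{v_q(k)}(\alpha)W_k^F(\mathbb V)T^k .
\]
Expanding $W_k^F$ gives
\[
\sum_{n,m'\in N}\sigma^{v_q(nm')}(\alpha)\,\frac{c_{nm'}}{c_{m'}}\,V_n^{m'}T^{nm'}.
\]

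\textbf{Splitting the index $m'$.} Write $m'=q^im$ with $m\in N_{(q)}$. We have $c_{q^im}=\Pi_i\sigma^i(c_m)$ and $v_q(nq^im)=v_q(nm)+i$. Hence the coefficient becomes
\[
c_{nq^im}\,\frac{\sigma^i\bigl(\sigma^{v_q(nm)}(\alpha)/c_m\bigr)}{\Pi_i}.
\]
Summing over $i$, this is exactly $\Theta^F\log_F$ applied to
\[
\sum^F_{n}\sum^F_{m}\operatorname{AH}\text{-}\exp_F\bigl(\sigma^{v_q(nm)}(\alpha)/c_m;(V_nT^n)^m\bigr),
\]
since $\log_F\operatorname{AH}\text{-}\exp_F(\beta;X)=\sum_i\sigma^i(\beta)X^{q^i}/\Pi_i$. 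Injectivity of $\Theta^F\log_F$ then gives the stated identity.

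\textbf{Integrality for $[\alpha]^F$.} For $m\in N_{(q)}$ we have $c_m=\nu_j\rho(m')$ with $\nu_j,\rho(m')$ units, so $\sigma^{v_q(nm)}(\alpha)/c_m\in R$. By the lemma on generalized Artin--Hasse curves, each factor lies in $TR[[T]]$, so the right side lies in $R[\mathbb V][[T]]$. The integrality of the $G_n^F$ then shows that $[\alpha]_n^F$ has coefficients in $R$.

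\textbf{Integrality for general $u$.} Write $u=\sum_j\alpha_j\tau^j$. In the skew ring $\alpha_j\tau^j=\tau^j\sigma^{-j}(\alpha_j)$, which is a bookkeeping nuisance, so it is cleaner to use $[uv]^F=[v]^F\circ[u]^F$ directly. We have $[\alpha\tau^j]^F=[\tau^j]^F\circ[\alpha]^F=V_{q^j}^F\circ[\alpha]^F$, and both maps are integral: $V^F$ trivially, and $[\alpha]^F$ by the step above. Since $[u+v]^F=S^F([u]^F,[v]^F)$ with $S^F$ integral, every finite truncation of $u$ acts integrally.

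Finally, $[u]_n^F$ depends only on $\alpha_j$ for $j\le v_q(n)$, by the definition of the ghost action. So $[u]_n^F=[u_{\le v_q(n)}]_n^F$ is integral for the finite truncation $u_{\le v_q(n)}$.

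The main obstacle is the index bookkeeping in the splitting step: the twist $\sigma^{v_q(nm)}$ must combine correctly with the factor $\sigma^i$ coming from $c_{q^im}=\Pi_i\sigma^i(c_m)$.
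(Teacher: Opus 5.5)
Your proposal is correct and follows the paper's proof essentially step for step: apply $\Theta^F\log_F$, rewrite the ghost side as $\sum_{n\in N}\sigma^{v_q(n)}(\alpha)W_n^F(\mathbb V)T^n$, split the cofactor as $q^im$ with $m\in N_{(q)}$ using $c_{q^im}=\Pi_i\sigma^i(c_m)$ and $v_q(nq^im)=v_q(nm)+i$, recognize the generalized Artin--Hasse curves, and conclude by injectivity and the integrality of the $G_n^F$. The paper also handles general $u$ as you do, writing $[u]_n^F$ as the $S^F$-sum over $j\le v_q(n)$ of $V_{q^j,n}^F([\alpha_j]^F(\mathbb V))$, which is exactly your truncation argument.
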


\begin{proof}
Let $\alpha\in R$. Then
\begin{align*}
&\Theta^F\log_F\left(\sum_{n\in N}\hspace{-1mm}{{}^F}(-_F)(-[\alpha]_n^F(\mathbb{V})T^n)\right)=\sum_{n\in N}W_n^F([\alpha]^F(\mathbb{V}))T^n \\
=&\sum_{n\in N}\operatorname{ghost}\text{-}[\alpha]_n^F(W^F(\mathbb{V}))T^n=\sum_{n\in N}\sigma^{v_q(n)}(\alpha)W_n^F(\mathbb{V})T^n \\
=&\sum_{n\in N}\sigma^{v_q(n)}(\alpha)\sum_{d\mid_Nn}\frac{c_n}{c_{n/d}}V_d^{n/d}T^n=\sum_{d\in N}\sum_{k\in N}\frac{c_{dk}\sigma^{v_q(dk)}(\alpha)}{c_k}V_d^kT^{dk} \\
=&\sum_{d\in N}\sum_{m\in N_{(q)}}\sum_{i=0}^{\infty}c_{dq^im}\frac{\sigma^{v_q(dq^im)}(\alpha)}{c_{q^im}}(V_dT^d)^{q^im} \\
=&\sum_{d\in N}\sum_{m\in N_{(q)}}\sum_{i=0}^{\infty}c_{dq^im}\frac{\sigma^i\left(\frac{\sigma^{v_q(dm)}(\alpha)}{c_m}\right)}{\Pi_i}(V_dT^d)^{q^im} \\
=&\Theta^F\left(\sum_{d\in N}\sum_{m\in N_{(q)}}\log_F\exp_F\left(\sum_{i=0}^{\infty}\frac{\sigma^i\left(\frac{\sigma^{v_q(dm)}(\alpha)}{c_m}\right)}{\Pi_i}(V_dT^d)^{q^im}\right)\right) \\
=&\Theta^F\log_F\left(\sum_{d\in N}\hspace{-1mm}{{}^F}\sum_{m\in N_{(q)}}\hspace{-3mm}{{}^F}\operatorname{AH}\text{-}\exp_F\left(\frac{\sigma^{v_q(dm)}(\alpha)}{c_m};(V_dT^d)^m\right)\right).
\end{align*}
Since $\Theta^F\log_F$ is injective,
\[
\sum_{n\in N}\hspace{-1mm}{{}^F}(-_F)(-[\alpha]_n^F(\mathbb{V})T^n)=\sum_{d\in N}\hspace{-1mm}{{}^F}\sum_{m\in N_{(q)}}\hspace{-3mm}{{}^F}\operatorname{AH}\text{-}\exp_F\left(\frac{\sigma^{v_q(dm)}(\alpha)}{c_m};(V_dT^d)^m\right).
\]

The right-hand side belongs to $R[\mathbb{V}][[T]]$, and hence the integrality of the formal $F$-Euler product representation polynomials implies that $[\alpha]_n^F$ has coefficients in $R$.

Finally, let $\ds u=\sum_{j=0}^{\infty}\alpha_j\tau^j\in R[[\tau;\sigma]]$. Then
\begin{align*}
[u]_n^F(\mathbb{V})&=\sum_{j=0}^{v_q(n)}{}^S[\alpha_j\tau^j]_n^F(\mathbb{V})=\sum_{j=0}^{v_q(n)}{}^S[\tau^j]_n^F([\alpha_j]^F(\mathbb{V})) \\
&=\sum_{j=0}^{v_q(n)}{}^SV_{q^j,n}^F([\alpha_j]^F(\mathbb{V}))\in R[\mathbb{V}].
\end{align*}
Here and throughout, $\ds \sum{}^S$ denotes summation with respect to $S^F$. Thus $[u]_n^F$ has coefficients in $R$.
\end{proof}

\section{Construction of the $F$-big Witt ring}

For $n\in\mathbb{N}_{>0}$, put
\begin{align*}
\overline{n}=\max_{\mid}\{d\in N\mid d\mid n\},\qquad r_n=\frac{n}{\overline{n}},
\end{align*}
and set
\[
\mathcal{R}=\{r_n\mid n\in\mathbb{N}_{>0}\}.
\]
We note that
\[
\mathbb{N}_{>0}=\coprod_{r\in\mathcal{R}}rN.
\]
Indeed, the monus-closedness of $N$ implies that the least common multiple of two elements of $N$ dividing a given positive integer again belongs to $N$. Hence $\overline n$ is well-defined. Moreover,
\[
\overline{r_n}=1~\text{and}~\overline{rn}=n~~~(r\in\mathcal R,\ n\in N).
\]
Thus every positive integer is uniquely written in the form $rn$ with $r\in\mathcal R$ and $n\in N$.

For a subset $Q\subset\mathbb{N}_{>0}$ and $r\in\mathcal{R}$, put
\[
Q_r=\{n\in N\mid rn\in Q\}.
\]
We call $Q$ an \textit{$N$-factor-closed set} if $Q_r\subset N$ is factor-closed for every $r\in\mathcal{R}$, and put
\[
\mathcal{R}_Q=\{r\in\mathcal{R}\mid Q_r\neq\emptyset\}.
\]
Then
\[
\mathcal{R}_Q\subset Q
\]
and
\[
Q=\coprod_{r\in\mathcal{R}_Q}rQ_r.
\]

For an $N$-factor-closed set $Q$ and $n,b\in\mathbb{N}_{>0}$, we write
\begin{align*}
Q(n)&=\{d\in Q\mid d\mid n\},\\
Q/b&=\{n\in\mathbb{N}_{>0}\mid bn\in Q\}.
\end{align*}
Then $Q(n)$ and $Q/b$ are again $N$-factor-closed.

\begin{thm}[Reformulation of Theorem \ref{thm:intro-F-big-Witt}]\label{thm:F-big-Witt}
Let $B$ be an $R$-algebra and let $Q'\subset Q\subset\mathbb{N}_{>0}$ be $N$-factor-closed sets. Then the following assertions hold.
\begin{enumerate}
\item[\textnormal{(i)}]
Put
\[
W_Q^F(B)=B^Q,
\]
and define
\begin{align*}
+_S:W_Q^F(B)\times W_Q^F(B)&\longrightarrow W_Q^F(B);\\
(\{v_n\},\{v_n'\})&\longmapsto\left\{S_{\overline n}^F(v_{r_nd},v_{r_nd}'\mid d\mid_N\overline n)\right\}_n,
\end{align*}
\begin{align*}
\cdot_P:W_Q^F(B)\times W_Q^F(B)&\longrightarrow W_Q^F(B);\\
(\{v_n\},\{v_n'\})&\longmapsto\left\{P_{\overline n}^F(v_{r_nd},v_{r_nd}'\mid d\mid_N\overline n)\right\}_n,
\end{align*}
and
\begin{align*}
[\cdot]^F:R[[\tau;\sigma]]^{\mathrm{op}}&\longrightarrow\operatorname{End}_{\mathrm{Ab}}(W_Q^F(B));\\
u&\longmapsto\left(\{v_n\}\longmapsto\left\{[u]_{\overline n}^F(v_{r_nd}\mid d\mid_N\overline n)\right\}_n\right).
\end{align*}
Then $W_Q^F(B)$ is an $R$-algebra and a right $R[[\tau;\sigma]]$-module whose multiplicative identity is
\[
\left(\{1\}_{n\in\mathcal{R}_Q},\{0\}_{n\in Q\setminus\mathcal{R}_Q}\right).
\]
The additive inverse of $\{v_n\}\in W_Q^F(B)$ is
\[
\left\{I_{\overline n}^F(v_{r_nd}\mid d\mid_N\overline n)\right\}_n.
\]
We call $W_Q^F(B)$ the \textit{$F$-big Witt ring associated with $Q$}.

\item[\textnormal{(ii)}]
The natural projection
\[
\pi_{Q,Q'}^F(B):W_Q^F(B)\longrightarrow W_{Q'}^F(B);~\{v_n\}_{n\in Q}\longmapsto\{v_n\}_{n\in Q'}
\]
is an $R$-algebra homomorphism.

\item[\textnormal{(iii)}]
The assignment
\[
W_Q^F:R\text{-}\mathfrak{Alg}\longrightarrow R\text{-}\mathfrak{Alg}
\]
is a functor, and
\[
\pi_{Q,Q'}^F:W_Q^F\longrightarrow W_{Q'}^F
\]
is a natural transformation.

\item[\textnormal{(iv)}]
The natural map
\[
\varprojlim_{n\in\mathbb{N}_{>0}}\pi_{Q,Q(n)}^F(B):W_Q^F(B)\xrightarrow{\sim}\varprojlim_{n\in\mathbb{N}_{>0}}W_{Q(n)}^F(B)
\]
is an isomorphism of $R$-algebras. In particular,
\[
\left\{\operatorname{Ker}\bigl(\pi_{Q,Q(n)}^F(B)\bigr)\right\}_{n\in\mathbb{N}_{>0}}
\]
forms a directed fundamental system of open ideals on $W_Q^F(B)$.

\item[\textnormal{(v)}]
Let $\{v_n\},\{v_n'\}\in W_Q^F(B)$. If, for every $n\in Q$, either $v_n=0$ or $v_n'=0$, then
\[
\{v_n\}+_S\{v_n'\}=\{v_n+v_n'\}.
\]

\item[\textnormal{(vi)}]
Equip
\[
\operatorname{Ghost}_Q(B)=B^Q
\]
with componentwise addition and multiplication, and define
\begin{align*}
\operatorname{ghost}\text{-}[\cdot]_Q^F:R[[\tau;\sigma]]^{\mathrm{op}}&\longrightarrow\operatorname{End}(\operatorname{Ghost}_Q(B)); \\
u&\longmapsto\left(\{x_n\}\longmapsto\left\{\operatorname{ghost}\text{-}[u]_{\overline n}^F(x_{r_nd}\mid d\mid_N\overline n)\right\}_n\right).
\end{align*}

Put
\[
\mathcal{C}^F(B)=\left\{\sum_{n\in\mathbb{N}_{>0}}\hspace{-2mm}{{}^F}(-_F)(-v_nT^n)~\middle|~v_n\in B\right\}.
\]
Define
\begin{align*}
\cdot_F:\mathcal{C}^F(B)\times\mathcal{C}^F(B)\longrightarrow\mathcal{C}^F(B)
\end{align*}
by
\begin{align*}
&\left(\sum_{n\ge1}\hspace{-1mm}{{}^F}(-_F)(-v_nT^n),\sum_{n\ge1}\hspace{-1mm}{{}^F}(-_F)(-v_n'T^n)\right) \\
&\longmapsto\sum_{r\in\mathcal R}\hspace{-1mm}{{}^F}\sum_{m,n\in N}\hspace{-2mm}{{}^F}\sum_{k\in N_{(q)}}\hspace{-2mm}{{}^F}\operatorname{AH}\text{-}\exp_F\left(\frac{c_{\frac{mn}{(m,n)}k}}{c_{\frac{m}{(m,n)}k}c_{\frac{n}{(m,n)}k}};\right. \\
&\hspace{15em}\left.\left(v_{rm}^{\frac{n}{(m,n)}}{v_{rn}'}^{\frac{m}{(m,n)}}T^{r\frac{mn}{(m,n)}}\right)^k\right).
\end{align*}

For $\ds u=\sum_{j=0}^{\infty}\alpha_j\tau^j\in R[[\tau;\sigma]]$, define
\begin{align*}
\mathcal{C}\text{-}[\cdot]^F:R[[\tau;\sigma]]^{\mathrm{op}}&\longrightarrow\operatorname{End}(\mathcal{C}^F(B))
\end{align*}
by
\begin{align*}
&\mathcal{C}\text{-}[u]^F\left(\sum_{n\ge1}{}^F(-_F)(-v_nT^n)\right) \\
=&\sum_{r\in\mathcal R}\hspace{-1mm}{{}^F}\sum_{j=0}^{\infty}{}^F\sum_{n\in N}\hspace{-1mm}{{}^F}\sum_{m\in N_{(q)}}\hspace{-3mm}{{}^F}\operatorname{AH}\text{-}\exp_F\left(\frac{\sigma^{v_q(mn)}(\alpha_j)}{c_m};(v_{rn}T^{rq^jn})^m\right).
\end{align*}

Furthermore, put
\[
\mathcal{U}_Q^F(B)=\left\{\sum_{n\notin Q}\hspace{-1mm}{{}^F}(-_F)(-v_nT^n)~\middle|~v_n\in B\right\}\subset\mathcal{C}^F(B),
\]
and define
\[
w_Q^F:W_Q^F(B)\longrightarrow\operatorname{Ghost}_Q(B)
\]
by
\[
w_Q^F(\{v_n\})=\left\{W_{\overline n}^F(v_{r_nd}\mid d\mid_N\overline n)\right\}_{n\in Q},
\]
and
\[
e_Q^F:W_Q^F(B)\longrightarrow\mathcal{C}^F(B)/\mathcal{U}_Q^F(B)
\]
by
\[
e_Q^F(\{v_n\})=\left[\sum_{n\in Q}\hspace{-1mm}{{}^F}(-_F)(-v_nT^n)\right].
\]
Finally, define
\[
\widetilde{\ell}_Q^F=w_Q^F\circ(e_Q^F)^{-1}:\mathcal{C}^F(B)/\mathcal{U}_Q^F(B)\longrightarrow\operatorname{Ghost}_Q(B).
\]

Then $\operatorname{Ghost}_Q(B)$ and $\mathcal{C}^F(B)$ are $R$-algebras and right $R[[\tau;\sigma]]$-modules, $\mathcal{U}_Q^F(B)$ is an ideal of $\mathcal{C}^F(B)$, and $w_Q^F$ and $\widetilde{\ell}_Q^F$ are both $R$-algebra homomorphisms and homomorphisms of right $R[[\tau;\sigma]]$-modules. Moreover, $e_Q^F$ is an isomorphism of $R$-algebras and of right $R[[\tau;\sigma]]$-modules, and the diagram
\[
\xymatrix{
W_Q^F(B)\ar[rd]^{w_Q^F} \ar[d]_{e_Q^F} & \\
\mathcal{C}^F(B)/\mathcal{U}_Q^F(B) \ar[r]_{\widetilde{\ell}_Q^F} \ar@{}[ru]|(.34){\circlearrowright} & \operatorname{Ghost}_Q(B)
}
\]
is commutative.

\item[\textnormal{(vii)}]
Let $D(B)$ denote the set of zero divisors of $B$.
Then
\[
\forall n\in Q,~c_{\overline n}1_B\notin D(B)~\Longrightarrow~w_Q^F\ \text{is injective},
\]
and
\[
\forall n\in Q,~c_{\overline n}1_B\in B^\ast~\Longrightarrow~w_Q^F\ \text{is an isomorphism}.
\]

\item[\textnormal{(viii)}]
Define
\begin{align*}
\tau_Q^F:B&\longrightarrow W_Q^F(B);\\
v&\longmapsto\left(\{v^r\}_{r\in\mathcal{R}_Q},\{0\}_{n\in Q\setminus\mathcal{R}_Q}\right),
\end{align*}
\[
\operatorname{ghost}\text{-}\tau_Q^F:B\longrightarrow\operatorname{Ghost}_Q(B);~x\longmapsto\{x^n\}_{n\in Q},
\]
and
\[
\mathcal{C}\text{-}\tau^F:B\longrightarrow\mathcal{C}^F(B);~v\longmapsto \sum_{r\in\mathcal{R}}\hspace{-1mm}{{}^F}(-_F)(-v^rT^r).
\]
Then $\tau_Q^F$, $\operatorname{ghost}\text{-}\tau_Q^F$, and $\mathcal{C}\text{-}\tau^F$ are multiplicative maps, and the diagrams
\[
\xymatrix{
B \ar@{=}[r] \ar[d]_{\tau_Q^F} & B \ar[d]^{\operatorname{ghost}\text{-}\tau_Q^F} \\
W_Q^F(B) \ar[r]_{w_Q^F} & \operatorname{Ghost}_Q(B) \ar@{}[lu]|{\circlearrowright}
}
\]
and
\[
\xymatrix{
B \ar[r]^{\tau_Q^F} \ar@{=}[d] & W_Q^F(B) \ar[d]^{e_Q^F} \\
B \ar[r]_{\overline{\mathcal{C}\text{-}\tau^F}} & \mathcal{C}^F(B)/\mathcal{U}_Q^F(B) \ar@{}[lu]|{\circlearrowright}
}
\]
are commutative.

\item[\textnormal{(ix)}]
For every $v\in B$ and every $\{v_n\}\in W_Q^F(B)$,
\[
\tau_Q^F(v)\cdot_P\{v_n\}=\{v^nv_n\}.
\]

\item[\textnormal{(x)}]
For $b\in\mathbb{N}_{>0}$, define
\begin{align*}
v_{b,Q}^F:W_{Q/b}^F(B)&\longrightarrow W_Q^F(B);\\
\{v_n\}&\longmapsto (\{v_{n/b}\}_{b\mid n\in Q},\{0\}_{b\nmid n\in Q}),
\end{align*}
and
\begin{align*}
\operatorname{ghost}\text{-}v_{b,Q}^F:\operatorname{Ghost}_{Q/b}(B)&\longrightarrow\operatorname{Ghost}_Q(B);\\
\{x_n\}&\longmapsto\left(\left\{\frac{c_{\overline n}}{c_{\overline{n/b}}}x_{n/b}\right\}_{b\mid n},\{0\}_{b\nmid n}\right).
\end{align*}
On the curve side, define
\[
\mathcal{C}\text{-}v_b^F:\mathcal{C}^F(B)\longrightarrow\mathcal{C}^F(B);~f(T)\longmapsto f(T^b).
\]

Define also
\begin{align*}
f_{b,Q}^F:W_Q^F(B)&\longrightarrow W_{Q/b}^F(B);\\
\{v_n\}&\longmapsto\left\{F_{\overline{br_n},\overline n}^F\left(v_{r_{br_n}d}\mid d\mid_N\overline{br_n}\,\overline n\right)\right\}_{n\in Q/b}, \\
\mathcal{C}\text{-}f_{b}^F&=e_{\mathbb{N}_{>0}}^F\circ f_{b,\mathbb{N}_{>0}}^F\circ (e_{\mathbb{N}_{>0}}^F)^{-1}:\mathcal{C}^F(B)\to \mathcal{C}^F(B)
\end{align*}
and
\[
\operatorname{ghost}\text{-}f_{b,Q}^F:\operatorname{Ghost}_Q(B)\longrightarrow\operatorname{Ghost}_{Q/b}(B);~\{x_n\}\longmapsto\{x_{bn}\}_{n\in Q/b}.
\]

Then
\[
v_{b,Q}^F,~\operatorname{ghost}\text{-}v_{b,Q}^F,~\mathcal{C}\text{-}v_b^F
\]
are homomorphisms of additive groups, while
\[
f_{b,Q}^F,~\operatorname{ghost}\text{-}f_{b,Q}^F,~\mathcal{C}\text{-}f_b^F
\]
are ring homomorphisms. Moreover, the diagrams
\[
\xymatrix{
W_{Q/b}^F(B) \ar[r]^{w_{Q/b}^F} \ar[d]_{v_{b,Q}^F} & \operatorname{Ghost}_{Q/b}(B) \ar[d]^{\operatorname{ghost}\text{-}v_{b,Q}^F} \\
W_Q^F(B) \ar[r]_{w_Q^F} & \operatorname{Ghost}_Q(B) \ar@{}[lu]|{\circlearrowright}
}
\]
\[
\xymatrix{
W_{Q/b}^F(B) \ar[r]^{v_{b,Q}^F} \ar[d]_{e_{Q/b}^F} & W_Q^F(B) \ar[d]^{e_Q^F} \\
\mathcal{C}^F(B)/\mathcal{U}_{Q/b}^F(B) \ar[r]_{\overline{\mathcal{C}\text{-}v_b^F}} & \mathcal{C}^F(B)/\mathcal{U}_Q^F(B) \ar@{}[lu]|{\circlearrowright}
}
\]
and
\[
\xymatrix{
W_Q^F(B) \ar[r]^{w_Q^F} \ar[d]_{f_{b,Q}^F} & \operatorname{Ghost}_Q(B) \ar[d]^{\operatorname{ghost}\text{-}f_{b,Q}^F} \\
W_{Q/b}^F(B) \ar[r]_{w_{Q/b}^F} & \operatorname{Ghost}_{Q/b}(B) \ar@{}[lu]|{\circlearrowright}
}
\]
\[
\xymatrix{
W_{Q}^F(B) \ar[r]^{f_{b,Q}^F} \ar[d]_{e_{Q}^F} & W_{Q/b}^F(B) \ar[d]^{e_{Q/b}^F} \\
\mathcal{C}^F(B)/\mathcal{U}_Q^F(B) \ar[r]_{\overline{\mathcal{C}\text{-}f_b^F}} & \mathcal{C}^F(B)/\mathcal{U}_{Q/b}^F(B) \ar@{}[lu]|{\circlearrowright}
}
\]
are commutative.

\item[\textnormal{(xi)}]
For $b,b'\in\mathbb{N}_{>0}$, we have
\[
v_{b',Q}^F\circ v_{b,Q/b'}^F=v_{b,Q}^F\circ v_{b',Q/b}^F=v_{bb',Q}^F,
\]
and
\[
f_{b',Q/b}^F\circ f_{b,Q}^F=f_{b,Q/b'}^F\circ f_{b',Q}^F=f_{bb',Q}^F.
\]
Moreover, for any $\{v_n\}\in W_Q^F(B),~\{v_n'\}\in W_{Q/b}^F(B)$,
\[
\{v_n\}\cdot_Pv_{b,Q}^F(\{v_n'\})=v_{b,Q}^F\left(f_{b,Q}^F(\{v_n\})\cdot_P\{v_n'\}\right).
\]
If $b\in\langle q\rangle\times N_{(p)}$, then
\[
f_{b,Q}^F\circ v_{b,Q}^F=[c_b]_{Q/b}^F.
\]
In particular,
\[
v_{b,Q}^F\bigl(W_{Q/b}^F(B)\bigr)\subset W_Q^F(B)
\]
is an ideal.

\item[\textnormal{(xii)}]
The map
\[
a^F:W_{\langle q\rangle}^{F_q}(B)\longrightarrow W_N^F(B);~\{v_{q,i}\}\longmapsto\left\{A_n^F(v_{q,i}\mid q^i\mid_Nn)\right\}_{n\in N}
\]
is a ring homomorphism, not necessarily preserving the multiplicative identity. Let
\[
\iota_q:\operatorname{Ghost}_{\langle q\rangle}(B)\longrightarrow\operatorname{Ghost}_N(B)
\]
be the natural embedding given by
\[
\{x_{q,i}\}\longmapsto\left(\{x_{q,i}\}_{n=q^i},\{0\}_{n\notin\langle q\rangle}\right).
\]
Then the diagram
\[
\xymatrix{
W_{\langle q\rangle}^{F_q}(B) \ar[r]^{w_{\langle q\rangle}^{F_q}} \ar[d]_{a^F} & \operatorname{Ghost}_{\langle q\rangle}(B) \ar[d]^{\iota_q} \\
W_N^F(B) \ar[r]_{w_N^F} & \operatorname{Ghost}_N(B) \ar@{}[lu]|{\circlearrowright}
}
\]
is commutative. In particular, if $f_{\sigma/F}=1$, then $a^F$ is a section of $\pi_{N,\langle q\rangle}^F$.

\item[\textnormal{(xiii)}]
The map
\[
\prod_{r\in\mathcal{R}}W_N^F(B)\xrightarrow{\sim}W_{\mathbb{N}_{>0}}^F(B);~\{\{v_n^{(r)}\}_n\}_r\longmapsto\{v_{\overline n}^{(r_n)}\}_n
\]
is a ring isomorphism.

\item[\textnormal{(xiv)}]
Suppose that $f_{\sigma/F}=1$. Then the maps
\[
W_N^F(B)\longrightarrow \prod_{m\in N_{(q)}}W_{\langle q\rangle}^{F_q}(B);~\{v_n\}\longmapsto\left\{\pi_{N,\langle q\rangle}^F\left(f_{m,N}^F(\{v_d\})\right)\right\}_m
\]
and
\begin{align*}
\prod_{m\in N_{(q)}}W_{\langle q\rangle}^{F_q}(B)&\longrightarrow W_N^F(B); \\
\{\{v_{q,i}^{(m)}\}_i\}_m&\longmapsto\sum_{m\in N_{(q)}}{}^S\left(v_{m,N}^F[\rho(m)^{-1}]_N^Fa^F\right)
\left(\{v_{q,i}^{(m)}\}_i\right)
\end{align*}
are mutually inverse ring isomorphisms.
\end{enumerate}
\end{thm}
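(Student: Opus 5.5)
The plan is the standard ``universal polynomial identity'' argument, transported from Section~3 to the sets $Q$. The starting observation is the decomposition $Q=\coprod_{r\in\mathcal R_Q}rQ_r$ with each $Q_r\subset N$ factor-closed. All operations in (i) act on the $r$-th block $\{v_{rd}\}_{d\in Q_r}$ only through the $N$-indexed polynomials $S^F_{\overline n},P^F_{\overline n},I^F_{\overline n},[u]^F_{\overline n}$ evaluated at $v_{r_nd}$ with $d\mid_N\overline n$, and these divisors lie in $Q_{r_n}$ by factor-closedness. Hence $W_Q^F(B)=\prod_{r\in\mathcal R_Q}W^F_{Q_r}(B)$ as sets with blockwise operations, which is essentially (xiii).

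\textbf{Integral operations and ring axioms (i)--(v).} By the integrality propositions of \S3.3, the polynomials $S^F_n,I^F_n,P^F_n,[u]^F_n,F^F_{b,n},A^F_n$ have coefficients in $R$, so all the maps are defined over any $R$-algebra $B$. The identities listed in \S3.2 (associativity, distributivity, unit, inverse, $[uv]=[v][u]$, and so on) hold in $K[\mathbb V,\mathbb V',\dots]$. Their two sides lie in $R[\mathbb V,\dots]$, and $R\subset K$ is injective because $R$ is $\pi$-torsion-free. So the identities already hold in $R[\mathbb V,\dots]$, and specialising $\mathbb V\mapsto(v_n)$ gives them in $W_Q^F(B)$. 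Since polynomials are compatible with ring maps and with restriction to factor-closed subsets, (ii)--(iv) follow, and (v) follows from the identity for $S^F$ with complementary supports.

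\textbf{Ghost map, curves, Teichm\"uller, Frobenius and Verschiebung (vi)--(xii).} The ghost map $w_Q^F$ is a homomorphism by the defining identities $W^F(S^F)=W^F+W^F$, and similarly for $P^F$ and $[u]^F$. For $e_Q^F$, I would use the formal $F$-Euler product polynomials $E^F_n,G^F_n$ over $R$ to show that every curve is uniquely of the form $\sum^F(-_F)(-v_nT^n)$, so that $e_Q^F$ is bijective modulo $\mathcal U_Q^F$. The ring and module structure on $\mathcal C^F(B)$ is then transported along this bijection. The explicit formulas for $\cdot_F$ and $\mathcal C\text{-}[u]^F$ are the Artin--Hasse expansions already proved in \S3.3, applied blockwise in $r$; here one uses $\overline{rn}=n$ together with $\mathcal C\text{-}v_r$.

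For (vii), I would argue by induction: $W^F_n=c_nV_n+(\text{earlier terms})$, so one can solve for $V_n$ whenever $c_{\overline n}$ is a non-zero-divisor, and solve uniquely whenever it is a unit. For (viii)--(ix), use $P^F((V,0,\dots),\mathbb V')=\{V^nV'_n\}$ on each block, together with the fact that $W^F_n(V,0,\dots)=V^n$ since $c_1=1$. For (x)--(xii), reduce to the \S3.2 identities for $V_b^F$ and $F_b^F$. The general $b$ is handled by splitting $b=r_b\overline b$; the factor $r_b$ only permutes blocks, via $r\mapsto r_{br}$ and $\overline{br}$, and this is exactly the indexing in $f_{b,Q}^F$. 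The identity $f_b\circ v_b=[c_b]$ is checked on ghost components, where both sides are multiplication by $c_b$ after the twist $\sigma^{v_q}$; it then descends by the same universality argument.

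\textbf{Decomposition (xiv) and the comparison statement.} For (xiv) with $f_{\sigma/F}=1$, so that $N=\langle q\rangle\times N_{(p)}$ and $c_{q^im}=\Pi_i\rho(m)$, I would verify the inverse relations on ghost components over the universal torsion-free ring. There the ghost map is injective by (vii), since the $c_n$ are non-zero-divisors in $K$. The relevant computation is $W_{q^im}$ of $v_m[\rho(m)^{-1}]a^F(x)$, which equals $c_{q^im}\rho(m)^{-1}\Pi_i^{-1}x_{q^i}=x_{q^i}$. The recovery of the classical big Witt ring for $\widehat{\mathbb G}_m$ is a direct comparison: there $c_n=n$ and $N=\mathbb N_{>0}$, so $W_n^F$ is the usual ghost polynomial. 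The comparison with Hazewinkel is the formula for $W^{F_q}_{q^i}$ already noted.

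I expect the main obstacle to be bookkeeping rather than arithmetic. The hardest step should be showing that $f_{b,Q}^F$ for $b\notin N$ is well defined and multiplicative across blocks, and that the explicit curve formulas match $e_Q^F$ modulo $\mathcal U_Q^F$. Here I would first prove the $Q=\mathbb N_{>0}$ case through the product decomposition (xiii), and then pass to general $Q$ by the projections of (ii).
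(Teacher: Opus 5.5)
Your proposal is correct and takes essentially the same route as the paper. Both arguments work blockwise via $Q=\coprod_{r\in\mathcal R_Q}rQ_r$. They verify the universal identities on ghost components over $K$, where the ghost map is bijective by triangularity because the $c_n$ are units. They then descend to $R$, and hence to arbitrary $B$, using the integrality of $S^F,I^F,P^F,[u]^F,F^F_b,A^F$ from the preceding section. Your handling of (vii), of $f_{b,Q}^F\circ v_{b,Q}^F=[c_b]^F_{Q/b}$ via $c_{\overline{bn}}/c_{\overline n}=\sigma^{v_q(n)}(c_b)$, and of (xiv) by a direct ghost computation matches the paper's argument; the paper phrases (xiv) through $f_m v_{m'}$ acting as $[\rho(m)]$ or $0$ on the image of $a^F$, which is the same calculation.
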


\begin{proof}
All the ring and module identities below may be checked sectorwise. Since every $c_n$ is a unit in $K$, the ghost map becomes an isomorphism after base change from $R$ to $K$ by the triangularity of the ghost polynomials. Hence the required universal polynomial identities may be verified on ghost components over $K$ and then descend to $R$ by the integrality results of the preceding section.

For (i), on each sector $rN$ the operations are precisely those defined by the polynomials $S^F$, $P^F$, $I^F$, and $[u]^F$. Hence the ring axioms and the right $R[[\tau;\sigma]]$-module structure follow from their formal identities. The multiplicative identity on the $r$-sector is concentrated at the coordinate $r$, which gives $\left(\{1\}_{r\in\mathcal R_Q},\{0\}_{n\in Q\setminus\mathcal R_Q}\right)$.

Assertions (ii) and (iii) follow immediately from the fact that $Q_r'$ is factor-closed in $Q_r$ and that all the defining polynomials are universal over $R$. Assertion (iv) follows coordinatewise, since every coordinate of $W_Q^F(B)$ occurs in $W_{Q(n)}^F(B)$ for a sufficiently divisible $n$. The Hausdorffness and completeness follow from the same observation.

Assertion (v) is the sectorwise form of
\[
S^F\left((\{V_n\}_{n\in M},\{0\}_{n\notin M}),(\{0\}_{n\in M},\{V_n\}_{n\notin M})\right)=\mathbb{V}.
\]

For (vi), the formal $F$-Euler product representation gives a bijection
\[
e_Q^F:W_Q^F(B)\xrightarrow{\sim}\mathcal C^F(B)/\mathcal U_Q^F(B).
\]
The formulas defining $\cdot_F$ and $\mathcal C\text{-}[u]^F$ are exactly the images under $e_Q^F$ of the Witt multiplication and the skew action constructed in the preceding section. Thus $e_Q^F$ is an isomorphism of $R$-algebras and of right $R[[\tau;\sigma]]$-modules. The remaining assertions and the commutative diagram follow from $\widetilde\ell_Q^F=w_Q^F\circ(e_Q^F)^{-1}$.

For (vii), the ghost polynomials are triangular:
\[
W_{\overline n}^F=c_{\overline n}V_{\overline n}+(\text{terms involving only proper $N$-divisors of $\overline n$}).
\]
Hence induction on $\overline n$ proves the injectivity assertion, and the same induction gives bijectivity when every $c_{\overline n}1_B$ is a unit.

For (viii), if $n=r_nd$ with $d\in N$, then
\[
W_d^F(v^{r_n},0,\ldots)=v^{r_nd}=v^n.
\]
Hence
\[
w_Q^F(\tau_Q^F(v))=\{v^n\}_{n\in Q}.
\]
The multiplicativity of the three Teichm\"uller maps and the two commutative diagrams follow from the corresponding universal polynomial identities. Similarly, (ix) follows sectorwise from
\[
P^F((V,0,0,\ldots),\mathbb V')=\{V^nV_n'\}.
\]

For (x), a direct substitution into the ghost polynomials gives
\[
w_Q^F\circ v_{b,Q}^F=\operatorname{ghost}\text{-}v_{b,Q}^F\circ w_{Q/b}^F,
\]
and, by the defining property of the Frobenius polynomials,
\[
w_{Q/b}^F\circ f_{b,Q}^F=\operatorname{ghost}\text{-}f_{b,Q}^F\circ w_Q^F.
\]
The assertions concerning addition, multiplication, and the commutative diagrams therefore follow from the corresponding polynomial identities. The formula
\[
\mathcal C\text{-}v_b^F(g)(T)=g(T^b)
\]
is immediate from the $F$-Euler product representation.

Assertion (xi) follows by evaluating both sides on ghost components. In particular, for $b\in\langle q\rangle\times N_{(p)}$,
\[
\frac{c_{\overline{bn}}}{c_{\overline n}}=\sigma^{v_q(n)}(c_b),
\]
and hence
\[
f_{b,Q}^F\circ v_{b,Q}^F=[c_b]_{Q/b}^F.
\]
The projection formula implies that the image of $v_{b,Q}^F$ is an ideal.

For (xii), by the definition of the polynomials $A_n^F$,
\[
w_N^F\circ a^F=\iota_q\circ w_{\langle q\rangle}^{F_q}.
\]
Thus $a^F$ is a ring homomorphism. If $f_{\sigma/F}=1$, then $N=\langle q\rangle\times N_{(q)}$, and restriction to the $\langle q\rangle$-coordinates gives
\[
\pi_{N,\langle q\rangle}^F\circ a^F=\id.
\]

Assertion (xiii) follows directly from the decomposition
\[
\mathbb N_{>0}=\coprod_{r\in\mathcal R}rN,
\]
since all operations are defined independently on each sector.

Finally, assume $f_{\sigma/F}=1$. Then
\[
N=\langle q\rangle\times N_{(q)}.
\]
On ghost components, the image of $a^F$ is supported precisely on the powers of $q$. Hence, for $m,m'\in N_{(q)}$,
\[
\pi_{N,\langle q\rangle}^Ff_{m,N}^Fv_{m',N}^F[\rho(m')^{-1}]_N^Fa^F=\begin{cases}\id & (m=m')\\ 0 & (m\neq m').\end{cases}
\]
Indeed, in the case $m=m'$ we use
\[
f_{m,N}^Fv_{m,N}^F=[c_m]_N^F=[\rho(m)]_N^F.
\]
Therefore the two maps in (xiv) are mutually inverse ring homomorphisms.
\end{proof}

\begin{rem}
If $k=\mathbb Q_p$, $R=\mathbb Z_p$, $\sigma=\id$, $q=p$, and $F=\widehat{\mathbb G}_m$, then $W_{\mathbb{N}_{>0}}^{\widehat{\mathbb{G}}_m}$ is the usual big Witt ring. 

Moreover, $W_{\langle q\rangle}^{F_q}$ is Hazewinkel's ramified $q$-typical Witt ring.

In what follows, when $Q=\mathbb{N}_{>0}$, we simply write $W^F(B),\operatorname{Ghost}(B),w^F,e^F,$
$\widetilde{\ell}^F,\tau^F,v^F,f^F,a^F$ for the corresponding objects and maps.
\end{rem}

\begin{prop}
Let $B$ be an integral domain which is also a $K$-algebra, and let
\[
b\in\langle q\rangle\times N_{(p)}.
\]
For $g\in\mathcal{C}^F(B)$, 
\[
(\mathcal{C}\text{-}f_b^F(g))(T^b)=\mathcal{C}\text{-}\left[\frac{\sigma^{-v_q(b)}(c_b)}{b}\right]^F\left(\sum_{\zeta^b=1}\hspace{-1mm}{{}^F} g(\zeta T)\right).
\]
\end{prop}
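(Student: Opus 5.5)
The plan is to prove the identity after applying $\log_F$. Since $B$ is an integral domain and a $K$-algebra, every $c_n$ is invertible in $B$, so $\log_F$ (equivalently $\Theta^F\log_F$) is injective on $TB[[T]]$. It therefore suffices to show that both sides have the same logarithm. The tool is the commutative diagram of Theorem~\ref{thm:F-big-Witt}(vi), which identifies the ghost components of $e^F$-preimages with the coefficients of $\Theta^F\log_F$ sector by sector.

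Concretely, write $\log_F g(T)=\sum_{n\ge1}a_nT^n$ with $a_n\in B$. Then the ghost vector of $(e^F)^{-1}(g)$ is $x_n=c_{\overline n}a_n$. The diagrams in Theorem~\ref{thm:F-big-Witt}(x) show that $\mathcal C\text{-}f_b^F$ acts on ghosts by $x_n\mapsto x_{bn}$. Hence
\[
\log_F\bigl(\mathcal C\text{-}f_b^F(g)\bigr)=\sum_k \frac{c_{\overline{bk}}}{c_{\overline k}}a_{bk}T^k.
\]
Next I use $b\in\langle q\rangle\times N_{(p)}$ to get $\overline{bk}=b\,\overline k$. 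Iterating $c_{qn}=\pi\sigma(c_n)$ gives $c_{q^ix}=\Pi_i\sigma^i(c_x)$, and the multiplicativity of $\rho$ then gives $c_{b\overline k}/c_{\overline k}=\sigma^{v_q(\overline k)}(c_b)$, exactly as in the proof of (xi). Substituting $T\mapsto T^b$, the coefficient of $T^{bk}$ on the left-hand side is $\sigma^{v_q(\overline k)}(c_b)a_{bk}$.

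On the right-hand side, $\log_F$ turns the $F$-sum into an ordinary sum:
\[
\log_F\Bigl(\sum_{\zeta^b=1}\hspace{-1mm}{}^F g(\zeta T)\Bigr)=\sum_n a_n\Bigl(\sum_{\zeta^b=1}\zeta^n\Bigr)T^n=b\sum_k a_{bk}T^{bk}.
\]
The roots of unity are taken in an extension domain. The result is symmetric in $\zeta$, so it lies in $B[[T]]$, and the $b$ distinct roots exist because $B$ has characteristic $0$. Then I apply $\mathcal C\text{-}[\alpha]^F$ with $\alpha=\sigma^{-v_q(b)}(c_b)/b$, interpreting the skew action over $K$ through the universal polynomials $U^F$, which is legitimate because $B$ is a $K$-algebra. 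By the formula $\operatorname{ghost}\text{-}[\alpha]_n^F(\mathbb X)=\sigma^{v_q(n)}(\alpha)X_n$, this multiplies the ghost coordinate at $bk$, and hence $a_{bk}$, by $\sigma^{v_q(\overline{bk})}(\alpha)$. Since $v_q(\overline{bk})=v_q(b)+v_q(\overline k)$ and $\sigma$ fixes the integer $b$, this factor equals $\sigma^{v_q(\overline k)}(c_b)/b$. The coefficient of $T^{bk}$ on the right is therefore $\sigma^{v_q(\overline k)}(c_b)a_{bk}$, matching the left. Injectivity of $\log_F$ then finishes the proof.

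The main obstacle is the bookkeeping across the sectors $r N$ with $r\in\mathcal R$. The ghost-to-log dictionary uses $c_{\overline n}$ rather than $c_n$, and the exponent in the skew action is $v_q(\overline n)$. I must therefore check carefully that $\overline{bk}=b\,\overline k$ and $r_{bk}=r_k$ for $b\in\langle q\rangle\times N_{(p)}$; this should follow from the monus-closedness of $N$. I must also check that the curve-side formulas for $\mathcal C\text{-}[u]^F$ and $\mathcal C\text{-}f_b^F$ in Theorem~\ref{thm:F-big-Witt}(vi),(x) indeed correspond to these ghost operations on every sector. I would verify this first on the universal polynomial level, sector by sector, before running the coefficient comparison above.
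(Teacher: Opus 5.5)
Your proposal is correct and follows essentially the paper's argument. Both compute the $\log_F$ (equivalently ghost) coefficients of each side using four ingredients: the ghost description $x_n\mapsto x_{bn}$ of $\mathcal C\text{-}f_b^F$, the roots-of-unity filter $\log_F G=b\sum_k a_{bk}T^{bk}$, the diagonal ghost action of the scalar $[\alpha]^F$, and the identity $c_{\overline{bk}}/c_{\overline k}=\sigma^{v_q(\overline k)}(c_b)$; both then conclude by injectivity of $\log_F$ over the $K$-algebra $B$. The only difference is cosmetic: the paper first produces $h$ with $h(T^b)$ equal to the right-hand side and identifies $h$ with $\mathcal C\text{-}f_b^F(g)$ through the ghost Verschiebung, whereas you compare the two sides directly after the substitution $T\mapsto T^b$.
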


\begin{proof}
Let $\Omega$ be an algebraic closure of the fraction field of $B$. For $g(T)\in\mathcal C^F(B)$, put
\[
G(T)=\sum_{\zeta^b=1}\hspace{-1mm}{{}^F} g(\zeta T)\in\mathcal C^F(\Omega).
\]
Writing $\ds \log_F g(T)=\sum_{n\ge1}a_nT^n$, we obtain
\[
\log_F G(T)=\sum_{\zeta^b=1}\log_F g(\zeta T)=b\sum_{n\ge1}a_{bn}T^{bn}.
\]
Since the formal group law $F$ is defined over $B$, every coefficient of $G(T)$ is a symmetric polynomial over $B$ in the $b$-th roots of
unity. Hence $G(T)\in B[[T]]$. Moreover, for every $\xi$ with $\xi^b=1$, we have $G(\xi T)=G(T)$, and therefore $G(T)\in B[[T^b]]$. Hence
\[
G(T)\in\mathcal C^F(B)\cap B[[T^b]].
\]
Thus the right-hand side in the assertion is a series in $T^b$, and there exists a unique $h\in\mathcal C^F(B)$ such that
\[
h(T^b)=\mathcal{C}\text{-}\left[\frac{\sigma^{-v_q(b)}(c_b)}{b}\right]^F\left(\sum_{\zeta^b=1}\hspace{-1mm}{{}^F} g(\zeta T)\right).
\]

Let $x_n$ be the $n$-th ghost component of $g$. The $n$-th ghost component of $G$ is
\[
\begin{cases}
bx_n & (b\mid n), \\ 0 & (b\nmid n).
\end{cases}
\]
Since the ghost action of $\mathcal C\text{-}[\alpha]^F$ is given by
\[
x_n\longmapsto\sigma^{v_q(n)}(\alpha)x_n,
\]
the $bt$-th ghost component of $\ds \mathcal C\text{-}\left[\frac{\sigma^{-v_q(b)}(c_b)}{b}\right]^FG$ is
\[
\sigma^{v_q(bt)}\left(\frac{\sigma^{-v_q(b)}(c_b)}{b}\right)b\,x_{bt}=\sigma^{v_q(t)}(c_b)x_{bt}.
\]
On the other hand, since $b\in\langle q\rangle\times N_{(p)}$,
\[
\frac{c_{\overline{bt}}}{c_{\overline t}}=\sigma^{v_q(t)}(c_b).
\]
Therefore, if $y_t$ denotes the $t$-th ghost component of $h$, then the relation
\[
h(T^b)=\mathcal C\text{-}\left[\frac{\sigma^{-v_q(b)}(c_b)}{b}\right]^FG(T)
\]
gives $\ds \frac{c_{\overline{bt}}}{c_{\overline t}}y_t=\frac{c_{\overline{bt}}}{c_{\overline t}}x_{bt}$, and hence $y_t=x_{bt}$. Thus
\[
\widetilde{\ell}^F(h)=\operatorname{ghost}\text{-}f_b^F\bigl(\widetilde{\ell}^F(g)\bigr).
\]
On the other hand, by the definition
\[
\mathcal C\text{-}f_b^F=e^F\circ f_b^F\circ(e^F)^{-1},
\]
we also have
\[
\widetilde{\ell}^F\bigl(\mathcal C\text{-}f_b^F(g)\bigr)=\operatorname{ghost}\text{-}f_b^F\bigl(\widetilde{\ell}^F(g)\bigr).
\]
Since $B$ is a $K$-algebra, all $c_{\overline n}$ are units in $B$, and hence $\widetilde{\ell}^F$ is an isomorphism. Therefore
\[
h=\mathcal C\text{-}f_b^F(g).
\]
This proves the asserted formula.
\end{proof}

\section{Generalized Fleck--Sun--Wan congruences}

Let $F$ be a Lubin--Tate formal $\ded_k$-module, and put
\[
q=\#\kappa_k
\]
We normalize the valuation $v_k$ by $v_k(\pi)=1$. Put
\begin{align*}
\mathcal{R}_k&=\left\{\sum_{n\in\mathbb{Z}}a_nT^n~\middle|~a_n\in\ded_k,~\lim_{n\to\infty}v_k(a_{-n})=\infty\right\}, \\
\mathcal{U}_k&=\mathcal{R}_k^\ast=\left\{\sum_{n\in\mathbb{Z}}a_nT^n\in\mathcal{R}_k~\middle|~a_n\in \ded_k^\ast\text{ for some }n\in\mathbb{Z}\right\}, \\
\mathcal{Q}_k&=\mathcal{R}_k\left[\frac{1}{\pi}\right] \\
&=\left\{\sum_{n\in\mathbb{Z}}a_nT^n~\middle|~\begin{array}{l} \ds a_n\in k,~\lim_{n\to\infty}v_k(a_{-n})=\infty, \\ \{v_k(a_n)\}_{n\in\mathbb Z}\text{ is bounded below}\end{array}\right\}, \\
F[\pi^s]&=\{a\in\overline k\mid[\pi^s]_F(a)=0\}.
\end{align*}
Define $\varphi_k:\mathcal{Q}_k\to \mathcal{Q}_k$ by
\[
(\varphi_k(g))(T)=g([\pi]_F(T)).
\]
Define the Coleman trace and the Coleman norm by
\[
\text{Tr}_{Co}:\mathcal{Q}_k\longrightarrow\mathcal{Q}_k;~g\longmapsto h,
\]
where $h$ is characterized by
\[
h([\pi]_F(T))=\frac{1}{\pi}\sum_{a\in F[\pi]}g(a+_FT),
\]
and
\[
N_{Co}:\mathcal{Q}_k\longrightarrow\mathcal{Q}_k;~g\longmapsto h,
\]
where $h$ is characterized by
\[
h([\pi]_F(T))=\prod_{a\in F[\pi]}g(a+_FT).
\]
Put
\[
\partial_F=\frac{d}{d\log_F(T)}=\frac{1}{\log_F'(T)}\frac{d}{dT}.
\]

\begin{defn}
Define the polynomials $\ds \binom{X}{m}_F$ by
\[
\exp(X\log_F(U))=\sum_{m=0}^{\infty}\binom{X}{m}_F U^m.
\]
For $s\in\mathbb{N}$ and $m\in\mathbb{Z}$, define
\[
\mathfrak{F}_{s,m}^F:\mathcal{Q}_k\longrightarrow k;~g\longmapsto (\text{the~coefficient~of~}T^m\text{~in~}\text{Tr}_{Co}^s(g)).
\]

We call $\ds \binom{X}{m}_F$ the \textit{$F$-binomial polynomial}, and $\mathfrak{F}_{s,m}^F$ the \textit{Fleck operator}.

By definition,
\begin{align*}
g(T+_FU)&=g\bigl(\exp_F(\log_F(T)+\log_F(U))\bigr) \\
&=\sum_{r=0}^{\infty}\frac{1}{r!}\frac{d^r(g\circ\exp_F)}{d\log_F(U)^r}(\log_F(T))\log_F(U)^r \\
&=\sum_{r=0}^{\infty}\frac{1}{r!}(\partial_F^rg)(T)\log_F(U)^r \\
&=\left(\left(\sum_{r=0}^{\infty}\frac{1}{r!}(\log_F(U)\partial_F)^r\right)(g)\right)(T) \\
&=\bigl((\exp(\log_F(U)\partial_F))g\bigr)(T) \\
&=\sum_{m=0}^{\infty}\left(\binom{\partial_F}{m}_F g\right)(T)U^m.
\end{align*}

In particular, when $k=\mathbb{Q}_p$ and $F=\widehat{\mathbb{G}}_m$,
\begin{align*}
\partial_{\widehat{\mathbb{G}}_m}&=(1+T)\frac{d}{dT}, \\
\binom{X}{m}_{\widehat{\mathbb{G}}_m}&=\binom{X}{m}.
\end{align*}
\end{defn}

\begin{prop}
For $n\in\mathbb{N}$, we have
\[
\mathfrak{F}_{s,m}^F(T^n)=\begin{cases} \ds \frac{1}{\pi^s}\sum_{a\in F[\pi^s]}\left(\binom{\partial_F/\pi^s}{m}_F T^n\right)(a) & (m\geq 0),
\\[10pt] 0 & (m<0), \end{cases}
\]
and
\[
\mathfrak{F}_{s,m}^F(T^{-n})=\begin{cases} \ds \frac{1}{\pi^s}\sum_{a\in F[\pi^s]}\left(\binom{\partial_F/\pi^s}{m+n}_F\left(\frac{[\pi^s]_F(T)}{T}\right)^n\right)(a) & (m\geq -n), \\[10pt] 0 & (m<-n).\end{cases}
\]
\end{prop}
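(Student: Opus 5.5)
We first unwind the definition of the iterated trace. By induction on $s$, using that $F[\pi^s]$ is the set of $a$ with $[\pi]_F(a)\in F[\pi^{s-1}]$ and that $[\pi^s]_F=[\pi]_F\circ[\pi^{s-1}]_F$, the defining relation of $\operatorname{Tr}_{Co}$ iterates to
\[
(\operatorname{Tr}_{Co}^s g)([\pi^s]_F(T))=\frac{1}{\pi^s}\sum_{a\in F[\pi^s]}g(a+_FT).
\]
For the inductive step, fix representatives $b$ of $F[\pi^s]/F[\pi^{s-1}]$, apply the case $s=1$ to $\operatorname{Tr}_{Co}^{s-1}g$, and reindex $b+_Fc$. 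Next we write $g(a+_FT)=\sum_{r\ge0}(\partial_F^r g)(a)\log_F(T)^r/r!$, using the expansion displayed in the definition of the $F$-binomial polynomials with the roles of $T$ and $U$ exchanged. We also have $\log_F([\pi^s]_F(T))=\pi^s\log_F(T)$. Putting $U=[\pi^s]_F(T)$, so that $\log_F(T)=\log_F(U)/\pi^s$, we obtain
\[
(\operatorname{Tr}_{Co}^sg)(U)=\frac{1}{\pi^s}\sum_{a\in F[\pi^s]}\bigl(\exp(\log_F(U)\,\partial_F/\pi^s)g\bigr)(a)=\sum_{m\ge0}\frac{1}{\pi^s}\sum_{a}\left(\binom{\partial_F/\pi^s}{m}_Fg\right)(a)\,U^m .
\]
Since $[\pi^s]_F$ is invertible under composition over $k$, this identity in $U$ is legitimate, and reading off the coefficient of $U^m$ gives the formula.

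For $g=T^n$ with $n\ge0$, this is exactly the first case. The coefficients of negative powers vanish because $T^n$ is a power series. Then $\operatorname{Tr}_{Co}^s(T^n)$ is again a power series: the right-hand side above is a power series in $T$ that is invariant under translation by $F[\pi^s]$, so it lies in $\ded_k[[U]][1/\pi]$ by the standard Coleman argument. This gives $\mathfrak F^F_{s,m}(T^n)=0$ for $m<0$.

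For $g=T^{-n}$ the expansion cannot be applied at $a=0$, where $g$ has a pole. The plan is to multiply through by $U^n=[\pi^s]_F(T)^n$, which is $F[\pi^s]$-translation invariant. Precisely, set $h(T)=\bigl([\pi^s]_F(T)/T\bigr)^n$, a power series over $\ded_k$ with constant term $\pi^{sn}$. Then
\[
U^n(a+_FT)^{-n}=h(a+_FT)\qquad\text{for } U=[\pi^s]_F(a+_FT)=[\pi^s]_F(T).
\]
Therefore
\[
U^n\,(\operatorname{Tr}^s_{Co}T^{-n})(U)=\frac{1}{\pi^s}\sum_{a\in F[\pi^s]}h(a+_FT).
\]
Applying the power-series computation above to $h$, the right-hand side equals $\sum_{M\ge0}\frac{1}{\pi^s}\sum_a\bigl(\binom{\partial_F/\pi^s}{M}_Fh\bigr)(a)U^M$. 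Dividing by $U^n$ and setting $m=M-n$ yields both the formula for $m\ge-n$ and the vanishing for $m<-n$.

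The main technical point is justifying the formal manipulations: convergence of the sums over $r$ when evaluated at torsion points $a$ with $v(a)>0$, and interchanging $\sum_a$ with the expansion. We will handle this by working in $\overline{k}$-coefficient power series in $T$, where each coefficient is a finite sum and $\partial_F^r g$ evaluated at $a$ converges because $g\circ\exp_F$ is analytic on the disc containing $\log_F(a)$. Alternatively, we may treat $a$ formally and note that both sides are polynomial identities coefficientwise. The iteration identity for $\operatorname{Tr}_{Co}^s$ is routine.
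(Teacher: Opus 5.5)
Your proposal is correct and follows the paper's proof. For $T^n$ you use the same expansion $g(a+_FT)=\bigl(\exp(\log_F(T)\partial_F)g\bigr)(a)$ with $\log_F(T)=\pi^{-s}\log_F([\pi^s]_F(T))$, and for $T^{-n}$ your multiplication by $[\pi^s]_F(T)^n$ is just the paper's projection-formula step $\operatorname{Tr}_{Co}^s\bigl(\varphi_k^s(T^{-n})\,h\bigr)=T^{-n}\operatorname{Tr}_{Co}^s(h)$, with $h=([\pi^s]_F(T)/T)^n$, written out termwise.

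One small point in your last paragraph: since $\log_F(a)=0$ for $a\in F[\pi^s]$, analyticity of $g\circ\exp_F$ near $\log_F(a)$ is not why $(\partial_F^rg)(a)$ makes sense, because evaluating that way would give $(\partial_F^rg)(0)$. The correct reason is that $\partial_F$ preserves $\ded_k[[T]]$, because $\log_F'(T)\in 1+T\ded_k[[T]]$, and that the coefficient of $U^m$ only involves $r\le m$. This is your ``treat $a$ formally'' alternative.
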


\begin{proof}
First,
\begin{align*}
&\sum_{m=0}^{\infty}\mathfrak{F}_{s,m}^F(T^n)([\pi^s]_F(U))^m=\bigl(\text{Tr}_{Co}^s(T^n)\bigr)([\pi^s]_F(U)) \\
=&\frac{1}{\pi^s}\sum_{a\in F[\pi^s]}(a+_FU)^n=\frac{1}{\pi^s}\sum_{a\in F[\pi^s]}\bigl((\exp(\log_F(U)\partial_F))T^n\bigr)(a) \\
=&\frac{1}{\pi^s}\sum_{a\in F[\pi^s]}\bigl((\exp(\pi^{-s}\log_F([\pi^s]_F(U))\partial_F))T^n\bigr)(a) \\
=&\frac{1}{\pi^s}\sum_{a\in F[\pi^s]}\sum_{m=0}^{\infty}\left(\binom{\partial_F/\pi^s}{m}_F T^n\right)(a)([\pi^s]_F(U))^m.
\end{align*}
Comparing coefficients of $([\pi^s]_F(U))^m$, we obtain
\[
\mathfrak{F}_{s,m}^F(T^n)=\begin{cases} \ds \frac{1}{\pi^s}\sum_{a\in F[\pi^s]}\left(\binom{\partial_F/\pi^s}{m}_F T^n\right)(a) & (m\geq 0), \\[10pt]0 & (m<0).\end{cases}
\]

On the other hand,
\begin{align*}
\text{Tr}_{Co}^s(T^{-n})&=\text{Tr}_{Co}^s\left([\pi^s]_F(T)^{-n}\left(\frac{[\pi^s]_F(T)}{T}\right)^n\right) \\
&=T^{-n}\text{Tr}_{Co}^s\left(\left(\frac{[\pi^s]_F(T)}{T}\right)^n\right) \\
&=T^{-n}\frac{1}{\pi^s}\sum_{a\in F[\pi^s]}\sum_{m=0}^{\infty}\left(\binom{\partial_F/\pi^s}{m}_F\left(\frac{[\pi^s]_F(T)}{T}\right)^n\right)(a)T^m \\
&=\frac{1}{\pi^s}\sum_{a\in F[\pi^s]}\sum_{m=-n}^{\infty}\left(\binom{\partial_F/\pi^s}{m+n}_F\left(\frac{[\pi^s]_F(T)}{T}\right)^n\right)(a)T^m.
\end{align*}
Hence
\[
\mathfrak{F}_{s,m}^F(T^{-n})=\begin{cases} \ds \frac{1}{\pi^s}\sum_{a\in F[\pi^s]}\left(\binom{\partial_F/\pi^s}{m+n}_F\left(\frac{[\pi^s]_F(T)}{T}\right)^n\right)(a) & (m\geq -n), \\[10pt] 0 & (m<-n).\end{cases}
\]
\end{proof}

\begin{cor}
For $n\in\mathbb{N}$, the Fleck operators associated with
$\widehat{\mathbb{G}}_m$ are given by
\[
\mathfrak{F}_{s,m}^{\widehat{\mathbb{G}}_m}(T^n)=(-1)^n\sum_{\substack{0\leq j\leq n\\p^s\mid j}}(-1)^j\binom{n}{j}\binom{j/p^s}{m}~~~(m\geq 0),
\]
and
\begin{align*}
&\mathfrak{F}_{s,m}^{\widehat{\mathbb{G}}_m}(T^{-n}) \\
=&\sum_{\substack{i_0,\dots,i_{p^s-1}\geq0\\i_0+\cdots+i_{p^s-1}=n\\p^s\mid i_1+2i_2+\cdots+(p^s-1)i_{p^s-1}}}\binom{n}{i_0,\dots,i_{p^s-1}}\binom{(i_1+2i_2+\cdots+(p^s-1)i_{p^s-1})/p^s}{m+n} \\
&\qquad\qquad\qquad\qquad\qquad\qquad\qquad\qquad\qquad\qquad\qquad\qquad\qquad\qquad (m\geq -n).
\end{align*}
\end{cor}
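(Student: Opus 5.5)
The plan is to specialize the preceding Proposition to $k=\mathbb Q_p$, $\pi=p$, $q=p$ and $F=\widehat{\mathbb G}_m$, and then diagonalize the operator $\binom{\partial_F/p^s}{m}_F$ on a suitable basis. For $\widehat{\mathbb G}_m$ we already know $\partial_{\widehat{\mathbb G}_m}=(1+T)\frac{d}{dT}$ and $\binom{X}{m}_{\widehat{\mathbb G}_m}=\binom{X}{m}$. Moreover $\widehat{\mathbb G}_m[p^s]=\{\zeta-1\mid \zeta^{p^s}=1\}$, since $[p^s]_{\widehat{\mathbb G}_m}(T)=(1+T)^{p^s}-1$.

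The key observation is that each $(1+T)^j$, $j\in\mathbb N$, is an eigenvector of $\partial_{\widehat{\mathbb G}_m}$ with eigenvalue $j$. Since $\binom{\partial/p^s}{m}$ is a polynomial in $\partial$ with coefficients in $\mathbb Q_p$, it follows that
\[
\binom{\partial/p^s}{m}(1+T)^j=\binom{j/p^s}{m}(1+T)^j .
\]
Evaluating at $a=\zeta-1$ gives $\binom{j/p^s}{m}\zeta^j$. Summing over $\zeta^{p^s}=1$ and using the orthogonality relation $\sum_{\zeta^{p^s}=1}\zeta^j=p^s$ or $0$ according as $p^s\mid j$ or not (as in the proof of Lemma~\ref{lem:intro-Coleman}), the prefactor $1/p^s$ cancels. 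We obtain
\[
\frac1{p^s}\sum_{a\in\widehat{\mathbb G}_m[p^s]}\left(\binom{\partial/p^s}{m}(1+T)^j\right)(a)=\begin{cases}\binom{j/p^s}{m} & (p^s\mid j),\\ 0 & (p^s\nmid j).\end{cases}
\]
By linearity it then suffices to expand the relevant polynomial in the basis $(1+T)^j$.

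For $T^n$ with $m\ge0$, I will write
\[
T^n=((1+T)-1)^n=\sum_{j=0}^n(-1)^{n-j}\binom nj(1+T)^j
\]
and apply the eigen-identity termwise. Using $(-1)^{n-j}=(-1)^n(-1)^j$, this yields exactly the first formula.

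For $T^{-n}$ with $m\ge -n$, I use the second formula of the Proposition, which involves $\bigl([p^s](T)/T\bigr)^n$. Here
\[
\frac{(1+T)^{p^s}-1}{T}=\sum_{i=0}^{p^s-1}(1+T)^i .
\]
By the multinomial theorem, its $n$-th power is
\[
\sum_{i_0+\cdots+i_{p^s-1}=n}\binom{n}{i_0,\dots,i_{p^s-1}}(1+T)^{e},\qquad e=i_1+2i_2+\cdots+(p^s-1)i_{p^s-1}.
\]
Applying the eigen-identity with $m+n$ in place of $m$ keeps precisely the terms with $p^s\mid e$. Each surviving term contributes $\binom{e/p^s}{m+n}$ times its multinomial coefficient, which is the second formula.

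There is no real obstacle. The only points needing care are that a polynomial in $\partial$ acts on an eigenvector by evaluating the polynomial at the eigenvalue, and that we are working with finite sums, so evaluation at the torsion points commutes with the expansion. The sign bookkeeping in the $T^n$ case is also routine.
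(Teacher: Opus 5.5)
Your proof is correct, but it is organized differently from the paper's. You obtain the statement as a genuine corollary of the preceding Proposition. The key observation is that $\partial=(1+T)\frac{d}{dT}$ is diagonal on the basis $(1+T)^j$, so $\binom{\partial/p^s}{m}$ acts there by the scalar $\binom{j/p^s}{m}$, and the roots-of-unity filter then keeps exactly the terms with $p^s\mid j$. For $T^{-n}$ you expand $\bigl([p^s](T)/T\bigr)^n=\bigl(\sum_{i<p^s}(1+T)^i\bigr)^n$ multinomially.

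The paper never invokes the Proposition. It returns to the definition of $\operatorname{Tr}_{Co}^s$ and writes $(\operatorname{Tr}_{Co}^s g)(U)=p^{-s}\sum_{a}g(\zeta^a(1+T)-1)$ with $U=(1+T)^{p^s}-1$. After filtering, it reads off $\binom{j/p^s}{m}$ as the coefficient of $U^m$ in $(1+T)^j=(1+U)^{j/p^s}$. For negative powers it uses the explicit factorization $(\zeta^a(1+T)-1)^{-n}=U^{-n}\bigl(\sum_{j<p^s}\zeta^{aj}(1+T)^j\bigr)^n$. This replaces the projection-formula step $\operatorname{Tr}_{Co}^s\bigl(h([p^s](T))\,g\bigr)=h\operatorname{Tr}_{Co}^s(g)$ that is built into the Proposition.

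At bottom the two computations coincide: both expand in the basis $(1+T)^j$ and apply the same filter. The bridge between the eigenvalue picture and the re-expansion picture is the identity $\exp\bigl(\tfrac{j}{p^s}\log(1+U)\bigr)=(1+U)^{j/p^s}$.

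Your route makes the statement an honest specialization of the general $F$-binomial operator formula and explains the weight $\binom{j/p^s}{m}$ conceptually as an eigenvalue. The paper's route is self-contained and does not rely on the formal operator calculus of the Proposition. That calculus includes moving $\exp(\log_F(U)\partial_F)$ past evaluation at torsion points, and the projection formula. The direct computation also displays the whole (Laurent) polynomial $\operatorname{Tr}_{Co}^s(T^{\pm n})$ in closed form.
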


\begin{proof}
Let $\zeta$ be a primitive $p^s$-th root of unity. Since
\[
\widehat{\mathbb G}_m[p^s]=\{\zeta^a-1\mid 0\le a<p^s\}
\]
and
\[
[p^s]_{\widehat{\mathbb G}_m}(T)=(1+T)^{p^s}-1,
\]
the definition of the Coleman trace gives
\[
(\operatorname{Tr}_{Co}^s g)\bigl((1+T)^{p^s}-1\bigr)=\frac{1}{p^s}\sum_{a=0}^{p^s-1}g\bigl(\zeta^a(1+T)-1\bigr).
\]
Put $U=(1+T)^{p^s}-1$. First, for $n\in\mathbb N$, we have
\begin{align*}
(\operatorname{Tr}_{Co}^s T^n)(U)&=\frac{1}{p^s}\sum_{a=0}^{p^s-1}\bigl(\zeta^a(1+T)-1\bigr)^n \\
&=\frac{1}{p^s}\sum_{a=0}^{p^s-1}\sum_{j=0}^n(-1)^{n-j}\binom{n}{j}\zeta^{aj}(1+T)^j \\
&=\sum_{\substack{0\le j\le n\\p^s\mid j}}(-1)^{n-j}\binom{n}{j}(1+T)^j \\
&=(-1)^n\sum_{\substack{0\le j\le n\\p^s\mid j}}(-1)^j\binom{n}{j}(1+U)^{\frac{j}{p^s}} \\
&=\sum_{m=0}^{\left\lfloor\frac{n}{p^s}\right\rfloor}\left((-1)^n\sum_{\substack{0\le j\le n\\p^s\mid j}}(-1)^j\binom{n}{j}\binom{j/p^s}{m}\right)U^m,
\end{align*}
where we used
\[
\frac{1}{p^s}\sum_{a=0}^{p^s-1}\zeta^{aj}=\begin{cases} 1 & (p^s\mid j), \\ 0 & (p^s\nmid j).\end{cases}
\]
Therefore, for $m\ge 0$,
\[
\mathfrak F_{s,m}^{\widehat{\mathbb G}_m}(T^n)=(m\operatorname{-th~coefficient~of~}\operatorname{Tr}_{Co}^sT^n)=(-1)^n\sum_{\substack{0\le j\le n\\p^s\mid j}}(-1)^j\binom{n}{j}\binom{j/p^s}{m}.
\]

Next, consider $T^{-n}$. For every $0\le a<p^s$,
\[
\frac{(1+T)^{p^s}-1}{\zeta^a(1+T)-1}=\frac{(\zeta^a(1+T))^{p^s}-1}{\zeta^a(1+T)-1}=\sum_{j=0}^{p^s-1}(\zeta^a(1+T))^j.
\]
Hence
\[
(\zeta^a(1+T)-1)^{-n}=U^{-n}\left(\sum_{j=0}^{p^s-1}\zeta^{aj}(1+T)^j\right)^n.
\]
It follows that
\begin{align*}
(\operatorname{Tr}_{Co}^sT^{-n})(U)&=\frac{U^{-n}}{p^s}\sum_{a=0}^{p^s-1}\left(\sum_{j=0}^{p^s-1}\zeta^{aj}(1+T)^j\right)^n\\
&=U^{-n}\sum_{\substack{i_0,\ldots,i_{p^s-1}\ge 0 \\ i_0+\cdots+i_{p^s-1}=n}}\binom{n}{i_0,\ldots,i_{p^s-1}}(1+T)^{i_1+2i_2+\cdots+(p^s-1)i_{p^s-1}}\\
&\qquad\qquad\times\frac{1}{p^s}\sum_{a=0}^{p^s-1}\zeta^{a(i_1+2i_2+\cdots+(p^s-1)i_{p^s-1})}.
\end{align*}
Applying the roots-of-unity filter once again, we obtain
\begin{align*}
(\operatorname{Tr}_{Co}^sT^{-n})(U)&=U^{-n}\sum_{\substack{i_0,\ldots,i_{p^s-1}\ge 0 \\ i_0+\cdots+i_{p^s-1}=n \\ p^s\mid i_1+2i_2+\cdots+(p^s-1)i_{p^s-1}}}\binom{n}{i_0,\ldots,i_{p^s-1}}\\
&\qquad\qquad\times (1+U)^{\frac{i_1+2i_2+\cdots+(p^s-1)i_{p^s-1}}{p^s}}. \\
&=\sum_{m=-n}^{\left\lfloor\frac{(p^s-1)n}{p^s}\right\rfloor-n}\Bigg(\sum_{\substack{i_0,\dots,i_{p^s-1}\ge 0 \\ i_0+\cdots+i_{p^s-1}=n \\ p^s\mid i_1+2i_2+\cdots+(p^s-1)i_{p^s-1}}}\binom{n}{i_0,\dots,i_{p^s-1}}\\
&\qquad\qquad\times\binom{(i_1+2i_2+\cdots+(p^s-1)i_{p^s-1})/p^s}{m+n}\Bigg)U^m
\end{align*}
Consequently, for $m\ge -n$,
\begin{align*}
\mathfrak F_{s,m}^{\widehat{\mathbb G}_m}(T^{-n})&=(m\operatorname{-th~coefficient~of~}\operatorname{Tr}_{Co}^sT^{-n}) \\
&=\sum_{\substack{i_0,\dots,i_{p^s-1}\ge 0 \\ i_0+\cdots+i_{p^s-1}=n \\ p^s\mid i_1+2i_2+\cdots+(p^s-1)i_{p^s-1}}}\binom{n}{i_0,\dots,i_{p^s-1}}\\
&\qquad\qquad\times\binom{(i_1+2i_2+\cdots+(p^s-1)i_{p^s-1})/p^s}{m+n}.
\end{align*}
This proves the assertion.
\end{proof}

\begin{proof}[Proof of Theorem \ref{general-FSW}]
Since
\[
\operatorname{Tr}_{Co}\bigl(\ded_k[[T]]\bigr)\subset \ded_k[[T]],
\]
the inclusion
\[
\operatorname{Tr}_{Co}(T^ng(T))\in\sum_{j=0}^{\lfloor n/q\rfloor}T^j\pi^{\left\lfloor\frac{n-1-jq}{q-1}\right\rfloor}\ded_k[[T]]
\]
is immediate for $0\le n<q$. Now let $n\ge q$. Since
\[
[\pi]_F(T)\equiv T^q\pmod{\pi},
\]
we have
\begin{align*}
\operatorname{Tr}_{Co}(T^ng(T))&=\operatorname{Tr}_{Co}\left([\pi]_F(T)T^{n-q}g(T)-([\pi]_F(T)-T^q)T^{n-q}g(T)\right)\\
&=T\operatorname{Tr}_{Co}(T^{n-q}g(T))-\pi\operatorname{Tr}_{Co}\left(T^{n-q+1}\frac{[\pi]_F(T)-T^q}{\pi T}g(T)\right).
\end{align*}
It follows inductively that
\[
\text{Tr}_{Co}(T^ng(T))\in\sum_{j=0}^{\left\lfloor n/q\right\rfloor}T^j\pi^{\left\lfloor\frac{n-1-jq}{q-1}\right\rfloor}\ded_k[[T]].
\]
Iterating this argument, we obtain
\[
\text{Tr}_{Co}^s(T^ng(T))\in\sum_{j=0}^{\left\lfloor n/q^s\right\rfloor}T^j\pi^{\left\lfloor\frac{n-q^{s-1}-jq^s}{(q-1)q^{s-1}}\right\rfloor}\ded_k[[T]].
\]
Since $\ds \left\lfloor\frac{n-q^{s-1}-jq^s}{(q-1)q^{s-1}}\right\rfloor$ is decreasing as a function of $j$, it follows that
\[
v_k\left(\mathfrak{F}_{s,m}^F(T^n)\right)\geq\left\lfloor\frac{n-q^{s-1}-mq^s}{(q-1)q^{s-1}}\right\rfloor.
\]
On the other hand, since $\operatorname{Tr}_{Co}^s(T^n)\in\ded_k[[T]]$, we also have
\[
v_k\left(\mathfrak F_{s,m}^F(T^n)\right)\ge 0.
\]
Therefore,
\[
v_k\left(\mathfrak F_{s,m}^F(T^n)\right)\ge\left\lfloor\frac{n-q^{s-1}-mq^s}{(q-1)q^{s-1}}\right\rfloor_+.
\]
This proves part (i).

Next, since $\ds \frac{[\pi]_F(T)}{T}\in(\pi,T^{q-1})\ded_k[[T]]$, we have
\[
\left(\frac{[\pi]_F(T)}{T}\right)^n\in\sum_{i=0}^n\pi^{n-i}T^{(q-1)i}\ded_k[[T]].
\]
Therefore,
\begin{align*}
\text{Tr}_{Co}(T^{-n}g(T))&=\text{Tr}_{Co}\left([\pi]_F(T)^{-n}\left(\frac{[\pi]_F(T)}{T}\right)^ng(T)\right)\\
&=T^{-n}\text{Tr}_{Co}\left(\left(\frac{[\pi]_F(T)}{T}\right)^ng(T)\right) \\
&\in T^{-n}\text{Tr}_{Co}\left(\sum_{i=0}^n\pi^{n-i}T^{(q-1)i}\ded_k[[T]]\right) \\
&\subset T^{-n}\sum_{i=0}^n\pi^{n-i}\sum_{j=0}^{\left\lfloor (q-1)i/q\right\rfloor}T^j\pi^{\left\lfloor\frac{(q-1)i-1-jq}{q-1}\right\rfloor}\ded_k[[T]] \\
&=\sum_{i=0}^n\sum_{j=0}^{\left\lfloor (q-1)i/q\right\rfloor}T^{-(n-j)}\pi^{n-i+\left\lfloor\frac{(q-1)i-1-jq}{q-1}\right\rfloor}\ded_k[[T]] \\
&\subset \sum_{j=0}^{\left\lfloor (q-1)n/q\right\rfloor}T^{-(n-j)}\pi^{\left\lfloor\frac{(q-1)n-1-jq}{q-1}\right\rfloor}\ded_k[[T]].
\end{align*}
Iterating this inclusion gives
\[
\text{Tr}_{Co}^s(T^{-n}g(T))\in\sum_{j=0}^{\left\lfloor\frac{(q-1)(sn-s+1)}{sq-s+1}\right\rfloor}T^{-(n-j)}\pi^{\left\lfloor\frac{(q-1)(sn-s+1)-1-j(sq-s+1)}{q-1}\right\rfloor}\ded_k[[T]].
\]
Since $\ds \left\lfloor\frac{(q-1)(sn-s+1)-1-j(sq-s+1)}{q-1}\right\rfloor$ is decreasing as a function of $j$, we obtain
\[
v_k\left(\mathfrak{F}_{s,m}^F(T^{-n})\right)\geq\left\lfloor\frac{(q-1)(1-s-sm)-m-n-1}{q-1}\right\rfloor.
\]
On the other hand, since $\operatorname{Tr}_{Co}^s(T^{-n})\in\mathcal R_k$, all its coefficients belong to $\ded_k$. Hence
\[
v_k\left(\mathfrak F_{s,m}^F(T^{-n})\right)\ge0.
\]
Therefore,
\[
v_k\left(\mathfrak F_{s,m}^F(T^{-n})\right)\ge\left\lfloor\frac{(q-1)(1-s-sm)-m-n-1}{q-1}\right\rfloor_+.
\]
This proves part (ii), and hence the theorem.
\end{proof}

\section{The $F$-big Witt ring and Fleck operators}

When the Coleman trace before normalization, with its constant term removed, is lifted to the $F$-big Witt ring, the Fleck operators appear in its Cartier expansion.

The integrality of the Coleman $F$-norm gives rise to congruences for these Fleck-type coefficients. Together with their valuation estimates, these congruences allow the resulting Cartier expansion to be extended to an arbitrary $\pi$-adically complete Hausdorff $\ded_k$-algebra $B$.

From now on, we return to the coefficient-multiplicative setting of the preceding sections and specialize to
\[
R=\ded_k,\qquad q=\#\kappa_k,\qquad \sigma=\id,
\]
and we use the notation
\[
f_k:=f_\sigma,~f_{k/F}:=f_{\sigma/F}.
\]

\begin{defn}
By the general theory of Lubin--Tate formal groups, for every $g\in\mathcal C^F(\ded_k)$ we have
\[
\sum_{a\in F[\pi]}\hspace{-2mm}{{}^F}\bigl(g(a+_FT)-_Fg(a)\bigr)\in\mathcal C^F(\ded_k)^{F[\pi]}=\varphi_k\mathcal C^F(\ded_k).
\]
Hence we define
\[
\mathcal C\text{-}N_{Co}^F:\mathcal C^F(\ded_k)\longrightarrow\mathcal C^F(\ded_k);~g(T)\longmapsto h(T),
\]
where $h$ is characterized by
\[
h([\pi]_F(T))=\sum_{a\in F[\pi]}\hspace{-2mm}{{}^F}\bigl(g(a+_FT)-_Fg(a)\bigr).
\]
We call $\mathcal C\text{-}N_{Co}^F$ the \textit{Coleman $F$-norm on Lubin--Tate curves}.

Since
\[
\log_F:\mathcal C^F(\ded_k)\xrightarrow{\sim}\mathcal{LT}_{\ded_k}
\]
is a bijection by Corollary \ref{FEL-cor}, the operator
\begin{align*}
\pi\widetilde{\operatorname{Tr}}_{Co}&=\log_F\circ\mathcal C\text{-}N_{Co}^F\circ\exp_F:\mathcal{LT}_{\ded_k}\longrightarrow \mathcal{LT}_{\ded_k}; \\
g&\longmapsto \sum_{a\in F[\pi]}\bigl(g(a+_FT)-g(a)\bigr).
\end{align*}
can be defined, and the diagram
\[
\xymatrix{
\mathcal C^F(\ded_k) \ar[r]^{\mathcal C\text{-}N_{Co}^F} \ar[d]_{\log_F} & \mathcal C^F(\ded_k) \ar[d]^{\log_F} \\
\mathcal{LT}_{\ded_k} \ar[r]_{\pi\widetilde{\text{Tr}}_{Co}} & \mathcal{LT}_{\ded_k} \ar@{}[lu]|{\circlearrowright}
}
\]
is commutative. For $g\in\mathcal{LT}_{\ded_k}\cap\mathcal Q_k$, this extension agrees with the normalized Coleman trace, namely
\[
\pi\widetilde{\operatorname{Tr}}_{Co}(g)=\pi\bigl(\operatorname{Tr}_{Co}(g)-(\operatorname{Tr}_{Co}(g))(0)\bigr).
\]

When $k=\mathbb Q_p$ and $F=\widehat{\mathbb G}_m$, for every $g\in T\mathbb Z_p[[T]]$ we have
\[
1+\mathcal C\text{-}N_{Co}^{\widehat{\mathbb G}_m}(g)=\frac{N_{Co}(1+g)}{(N_{Co}(1+g))(0)}.
\]
\end{defn}

\begin{lem}\label{lem:eta-F}
Let $\gamma:\mathbb N_{(p)}\longrightarrow\ded_k$ be the Dirichlet inverse of $\rho(\overline{\cdot})^{-1}$, that is,
\[
\sum_{d\mid m}\gamma(d)\frac{1}{\rho(\overline{m/d})}=\delta_{m,1},
\]
where $\delta_{m,1}$ denotes the Kronecker delta. Define $\{\beta_i\}_{i\in\mathbb N}$ by
\[
\sum_{i=0}^{\infty}\beta_iT^i=\frac{1-T}{1-T^{f_F}}\left(1-\frac{T^{f_k}}{\pi}\right)\left(\sum_{r=0}^{f_{k/F}-1}\nu_r^{-1}T^{f_Fr}\right)^{-1},
\]
and define 
\[
\eta^F:\mathbb{N}_{>0}\longrightarrow k;~n=p^im\longmapsto \beta_i\gamma(m)~~~(i\in\mathbb{N},~m\in\mathbb{N}_{(p)}).
\]
Then $\eta^F$ is the Dirichlet inverse of $c_{\overline{\cdot}}^{-1}$, that is,
\[
\sum_{d\mid n}\frac{\eta^F(d)}{c_{\overline{n/d}}}=\delta_{n,1}~~~(n\in\mathbb N_{>0})
\]
In particular,
\[
v_k(\eta^F(n))\ge -1~~~(n\in\mathbb N_{>0}).
\]
If moreover $N_{(p)}=\mathbb N_{(p)}$, then
\[
\gamma(m)=\frac{\mu(m)}{\rho(m)}.
\]
\end{lem}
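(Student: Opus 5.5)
The plan is to split $n=p^im$ with $i\in\mathbb N$ and $m\in\mathbb N_{(p)}$. The function $n\mapsto c_{\overline n}^{-1}$ should then factor as $a(i)\,b(m)$, so that its Dirichlet inverse factors as well: a Cauchy (power-series) inverse in the $p$-power variable times a Dirichlet inverse on $\mathbb N_{(p)}$.

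\textbf{Step 1: the factorization of $\overline n$ and $c_{\overline n}$.} Since $N=\langle p^{f_F}\rangle\times N_{(p)}$, a divisor $d\mid n$ lies in $N$ exactly when its $p$-part is a power of $p^{f_F}$ and its prime-to-$p$ part lies in $N_{(p)}$. Hence
\[
\overline{p^im}=\overline{p^i}\cdot\overline m,\qquad \overline{p^i}=p^{f_F\lfloor i/f_F\rfloor},
\]
where $\overline m$ is the maximal divisor of $m$ lying in $N_{(p)}$. It exists by the remark preceding Theorem~\ref{thm:F-big-Witt}, using monus-closedness. Write $\lfloor i/f_F\rfloor=f_{k/F}j+r$ with $0\le r<f_{k/F}$. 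Then $\overline{p^i}=p^{f_Fr}q^j$, and by the definition of the coefficients (with $\sigma=\id$, so $\Pi_j=\pi^j$) we get
\[
c_{\overline n}=\pi^j\nu_r\rho(\overline m).
\]
So $c_{\overline n}^{-1}=a(i)\,b(m)$ with $a(i)=\pi^{-j}\nu_r^{-1}$ and $b(m)=\rho(\overline m)^{-1}$.

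\textbf{Step 2: the convolution splits.} Divisors of $p^im$ are exactly the products $p^{i'}d$ with $i'\le i$ and $d\mid m$. Therefore
\[
\sum_{d\mid n}\eta^F(d)\,c_{\overline{n/d}}^{-1}=\Bigl(\sum_{i'=0}^{i}\beta_{i'}\,a(i-i')\Bigr)\Bigl(\sum_{d\mid m}\gamma(d)\,b(m/d)\Bigr).
\]
It then suffices to show two things: $\sum_i a(i)T^i$ is the inverse of $\sum_i\beta_iT^i$ in $k[[T]]$, and $\gamma$ is the Dirichlet inverse of $b$. The latter holds by definition, and it exists because $b(1)=\rho(1)^{-1}=1$.

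\textbf{Step 3: the generating function of $a$.} Group the exponents as $i=f_F(f_{k/F}j+r)+e$ with $0\le e<f_F$. This gives
\[
\sum_i a(i)T^i=\frac{1-T^{f_F}}{1-T}\Bigl(\sum_{r=0}^{f_{k/F}-1}\nu_r^{-1}T^{f_Fr}\Bigr)\Bigl(1-\frac{T^{f_k}}{\pi}\Bigr)^{-1},
\]
using $f_Ff_{k/F}=f_k$. This is exactly the reciprocal of the series defining the $\beta_i$.

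\textbf{Step 4: the valuation bound.} The factor $\frac{1-T}{1-T^{f_F}}=(1-T)\sum_{l\ge0}T^{f_Fl}$ has integral coefficients. The factor $1-T^{f_k}/\pi$ has coefficients of valuation at least $-1$. The series $\sum_r\nu_r^{-1}T^{f_Fr}$ has unit constant term $\nu_0^{-1}=1$ and unit coefficients, so its inverse is integral. Hence $v_k(\beta_i)\ge-1$. Moreover $\gamma$ takes values in $\ded_k$, since it is obtained recursively from $\gamma(1)=1$ using only the units $\rho(\cdot)^{-1}$. Together these give $v_k(\eta^F(n))\ge-1$.

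\textbf{Step 5: the case $N_{(p)}=\mathbb N_{(p)}$.} Here $\overline m=m$, and $\rho$ is completely multiplicative. The Dirichlet inverse of a completely multiplicative $\rho^{-1}$ is $\mu\rho^{-1}$, which one checks by
\[
\sum_{d\mid m}\mu(d)\rho(d)^{-1}\rho(m/d)^{-1}=\rho(m)^{-1}\sum_{d\mid m}\mu(d)=\delta_{m,1}.
\]

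I expect the main obstacle to be Step 1. One has to justify carefully that $\overline{\,\cdot\,}$ splits as $\overline{p^i}\cdot\overline m$ and, more subtly, that $\overline{n/d}$ factors compatibly for every divisor $d$, so that the convolution in Step 2 genuinely factorizes. This requires $N$ to be a direct product of its $p$-part and its prime-to-$p$ part, which holds by construction of $N$. After that, the rest is a generating-function bookkeeping check.
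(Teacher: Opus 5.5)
Your proposal is correct and follows essentially the same route as the paper. You factor $c_{\overline{p^im}}=c_{\overline{p^i}}\,\rho(\overline m)$, invert the $p$-part through the generating-function identity defining the $\beta_i$ and the prime-to-$p$ part through $\gamma$, read off $v_k(\beta_i)\ge -1$ and the integrality of $\gamma$, and finish with the M\"obius formula when $N_{(p)}=\mathbb N_{(p)}$. You also spell out two things the paper only asserts: the splitting $\overline{p^im}=\overline{p^i}\,\overline m$ coming from $N=\langle p^{f_F}\rangle\times N_{(p)}$, and the derivation of the generating function for $c_{\overline{p^i}}^{-1}$.
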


\begin{proof}
Since
\[
\sum_{i=0}^{\infty} \frac{1}{c_{\overline{p^i}}}T^i=\frac{1-T^{f_F}}{1-T}\left(1-\frac{T^{f_k}}{\pi}\right)^{-1}\left(\sum_{r=0}^{f_{k/F}-1} \nu_r^{-1}T^{f_Fr}\right),
\]
then
\[
\sum_{i=0}^{\infty} \left(\sum_{j=0}^i \frac{\beta_j}{c_{\overline{p^{i-j}}}}\right)T^i=\left(\sum_{i=0}^{\infty} \beta_iT^i\right)\left(\sum_{i=0}^{\infty} \frac{1}{c_{\overline{p^i}}}T^i\right)=1,
\]
and then
\[
\sum_{j=0}^i \frac{\beta_j}{c_{\overline{p^{i-j}}}}=\delta_{i,0}~~~(\forall i\in\mathbb{N}).
\]
Since $\sigma=\id$, for any $n=p^im\in\mathbb{N}_{>0}~~~(i\in\mathbb{N},~m\in\mathbb{N}_{(p)})$, we obtain
\[
c_{\overline{p^im}}=c_{\overline{p^i}}c_{\overline{m}}=c_{\overline{p^i}}\rho(\overline m).
\]
Therefore
\begin{align*}
\sum_{d\mid n}\frac{\eta^F(d)}{c_{\overline{n/d}}}&=\sum_{j=0}^{i}\sum_{d'\mid m}\frac{\beta_j\gamma(d')}{c_{\overline{p^{i-j}(m/d')}}} \\
&=\sum_{j=0}^{i}\sum_{d'\mid m}\frac{\beta_j\gamma(d')}{c_{\overline{p^{i-j}}}\rho(\overline{m/d'})} \\
&=\left(\sum_{j=0}^{i}\frac{\beta_j}{c_{\overline{p^{i-j}}}}\right)\left(\sum_{d'\mid m}\frac{\gamma(d')}{\rho(\overline{m/d'})}\right) \\
&=\delta_{i,0}\delta_{m,1}=\delta_{n,1}.
\end{align*}
Hence $\eta^F$ is the Dirichlet inverse of $c_{\overline{\cdot}}^{-1}$.

Moreover,
\[
\sum_{i=0}^{\infty}\beta_iT^i=\frac{1-T}{1-T^{f_F}}\left(1-\frac{T^{f_k}}{\pi}\right)\left(\sum_{r=0}^{f_{k/F}-1}\nu_r^{-1}T^{f_Fr}\right)^{-1}\in\pi^{-1}\ded_k[[T]],
\]
so that
\[
v_k(\beta_i)\ge -1~~~(i\in\mathbb N).
\]
Since $\rho(\overline m)\in\ded_k^\ast$, the Dirichlet inverse $\gamma$ takes values in $\ded_k$: indeed, $\gamma(1)=1$, and for
$m>1$,
\[
\gamma(m)=-\sum_{\substack{d\mid m\\d<m}}\frac{\gamma(d)}{\rho(\overline{m/d})},
\]
so the assertion follows by induction on $m$. Therefore, we obtain
\[
v_k(\eta^F(n))=v_k(\beta_i\gamma(m))\ge -1.
\]

Finally, suppose that
\[
N_{(p)}=\mathbb N_{(p)}.
\]
Then $\overline m=m$ for every $m\in\mathbb N_{(p)}$. Since $\rho$ is multiplicative,
\[
\sum_{d\mid m}\frac{\mu(d)}{\rho(d)}\frac{1}{\rho(m/d)}=\frac{1}{\rho(m)}\sum_{d\mid m}\mu(d)=\delta_{m,1}.
\]
By uniqueness of the Dirichlet inverse,
\[
\gamma(m)=\frac{\mu(m)}{\rho(m)}.
\]
\end{proof}

\begin{thm}\label{new-cong}
For every $m,n\in\mathbb N_{>0}$, the following assertions hold.
\begin{itemize}
\item[\textnormal{(i)}]
\[
v_k\bigl(\mathfrak F_{1,n}^F(T^m)\bigr)\geq v_q(m)-v_q(n)-1.
\]

\item[\textnormal{(ii)}]
\[
v_k\left(\sum_{d\mid(m,n)}\eta^F(d)\frac{\mathfrak F_{1,m/d}^F(T^{n/d})}{c_{\overline{n/d}}}\right)\geq -1.
\]

\item[\textnormal{(iii)}]
\begin{align*}
&v_k\left(\sum_{d\mid(m,n)}\eta^F(d)\frac{\mathfrak F_{1,m/d}^F(T^{n/d})}{c_{\overline{n/d}}}\right) \\
&\qquad\geq\min_{d\mid(m,n)}\left\{\left\lfloor\frac{1}{q-1}\left(\frac nd-1-\frac{mq}{d}\right)\right\rfloor_+-v_q\left(\frac nd\right)\right\}-1.
\end{align*}
\end{itemize}
\end{thm}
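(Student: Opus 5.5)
The plan is to prove (iii) directly from the earlier valuation estimates, to get (ii) from the integrality of the Coleman $F$-norm read through the $F$-big Witt ring, and to prove (i) by an iterated Frobenius argument for the Coleman trace.

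\textbf{Part (iii).} This should be a direct consequence of results already available. For each $d\mid(m,n)$ we bound the three factors separately. Lemma~\ref{lem:eta-F} gives $v_k(\eta^F(d))\ge -1$. By construction (with $\sigma=\id$, $\nu_j$ and $\rho$ units) $v_k(c_{\overline{n/d}})=v_q(\overline{n/d})\le v_q(n/d)$. Theorem~\ref{general-FSW}(i) with $s=1$, applied to $T^{n/d}$ and the coefficient index $m/d$, gives
\[
v_k\bigl(\mathfrak F_{1,m/d}^F(T^{n/d})\bigr)\ge\left\lfloor\frac{n/d-1-mq/d}{q-1}\right\rfloor_+ .
\]
Taking the minimum over $d$ yields (iii); nothing further is needed.

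\textbf{Part (ii).} Here I would apply the Coleman $F$-norm to a universal Lubin--Tate curve. Take $R'=\ded_k[v]$ with $v$ an indeterminate, and $g=(-_F)(-vT)$, so that $\log_F g=\sum_{n\in N}v^nT^n/c_n$ up to the sign convention of $\ell$. Alternatively use the full Teichm\"uller curve $\mathcal C\text{-}\tau^F(v)$, so that every $n\in\mathbb N_{>0}$ occurs with weight $1/c_{\overline n}$ sector by sector.

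By the commutative square defining $\mathcal C\text{-}N_{Co}^F$, the curve $h=\mathcal C\text{-}N_{Co}^F(g)$ satisfies
\[
\log_F h=\pi\widetilde{\operatorname{Tr}}_{Co}(\log_F g),
\]
so its $T^m$-coefficient is $a_m=\pi\sum_n v^n\,\mathfrak F_{1,m}^F(T^n)/c_{\overline n}$. Since $h\in\mathcal C^F(\ded_k[v])$, it has an $F$-Euler product expansion $h=\sum^F(-_F)(-w_mT^m)$ with $w_m\in\ded_k[v]$. Theorem~\ref{thm:F-big-Witt}(vi) gives the ghost relation
\[
c_{\overline m}a_m=\sum_{d\mid m}\frac{c_{\overline m}}{c_{\overline{m/d}}}\,w_d^{m/d}.
\]
Inverting this with the Dirichlet inverse $\eta^F$ of $c_{\overline\cdot}^{-1}$ should express the coefficient of $v^n$ in $w_m$ as $\pi\sum_{d\mid(m,n)}\eta^F(d)\,\mathfrak F_{1,m/d}^F(T^{n/d})/c_{\overline{n/d}}$ plus a correction coming from the nonlinear terms $w_d^{m/d}$. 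Integrality of $w_m$ then gives valuation $\ge0$ for $\pi\cdot(\text{sum})$, which is (ii).

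\textbf{Main obstacle.} The difficult step is controlling these nonlinear contributions, i.e.\ cross terms of $w_d^{m/d}$ that land in degree $v^n$. I would first try an induction on $m$ and on the $v$-degree. By the $F$-version of Lemma~\ref{FEL}(ii), $w_d^{m/d}$ is congruent to the Frobenius-twisted image of the lower Witt coordinates modulo a power of $\pi$ that exactly compensates $c_{\overline m}/c_{\overline{m/d}}$. If the induction becomes unwieldy, I would replace $v$ by a family of independent variables, one attached to each index, so that the Euler-product coordinates separate linearly.

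\textbf{Part (i).} Write $m=q^im'$ and use $T^q=[\pi]_F(T)-\pi H(T)$ with $H\in\ded_k[[T]]$, together with $\operatorname{Tr}_{Co}(f\circ[\pi]_F\cdot g)=f\operatorname{Tr}_{Co}(g)$. Expanding $T^m=([\pi]_F(T)-\pi H)^{q^{i-1}m'}$ by the binomial argument of Lemma~\ref{FEL}(ii), the terms of small $[\pi]_F$-degree carry large powers of $\pi$. This shows the coefficient of $T^n$ in $\operatorname{Tr}_{Co}(T^m)$ has valuation at least $v_q(m)-v_q(n)-1$. The $-1$ comes from the factor $1/\pi$ in $\operatorname{Tr}_{Co}$, and the loss of $v_q(n)$ from reading off the coefficient of $T^n$, i.e.\ of the power $([\pi]_F)^{n}$.
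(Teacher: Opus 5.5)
\textbf{Parts (i) and (iii).} Your argument for (iii) is exactly the paper's. For (i) you take a different and more elementary route. The paper instead reads (i) off from the integrality of the ghost coefficients $\pi c_{\overline s}\mathfrak F_{1,s}^F(T^m)/c_{\overline m}$ of the normed Teichm\"uller curve. Your sketch is loose, and your account of where the $-1$ and the $-v_q(n)$ come from is not what actually happens, but the route works and its clean form is stronger. For $q\mid m$, the lifting argument in the proof of Lemma~\ref{FEL}(ii) gives $T^m\equiv[\pi]_F(T)^{m/q}\pmod{\pi^{v_q(m)}\ded_k[[T]]}$. Since $\operatorname{Tr}_{Co}(1)=q/\pi$ and $\operatorname{Tr}_{Co}$ preserves $\ded_k[[T]]$, you get $\mathfrak F_{1,n}^F(T^m)\in\frac q\pi\delta_{m,qn}+\pi^{v_q(m)}\ded_k$ for all $m,n$; the case $q\nmid m$ is trivial.

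\textbf{The gap is in (ii).} What you call the ``main obstacle'' is precisely what (ii) asserts, and you leave it open. Nothing in the proposal shows that the nonlinear contributions of the powers $w_d^{m/d}$ to the $v^n$-coefficient of $w_m$ are integral. The Dwork-congruence idea is only gestured at. You do not explain how sectorwise Frobenius congruences would produce the $\eta^F$-combination, which runs over all $d\mid(m,n)$ and not only over $N$-divisors. Passing to independent variables does not remove the nonlinearity of the Witt coordinates either.

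There are also smaller inaccuracies:
\begin{itemize}
\item Your ghost relation holds only for $N=\mathbb N_{>0}$. In general the $m$-th ghost component is $\sum_{d\mid_N\overline m}\frac{c_{\overline m}}{c_{\overline m/d}}w_{r_md}^{\overline m/d}$.
\item $h$ has Witt coordinates in $\ded_k[[v]]$, not $\ded_k[v]$.
\item Integrality of $\mathcal C\text{-}N_{Co}^F$ over such a coefficient ring needs an argument. The paper works over $\ded_k[V]/(V^n)$ and passes to the limit.
\end{itemize}

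\textbf{How the gap can be closed.} The paper sidesteps the nonlinearity by expanding in a different basis. After applying $\widetilde\Theta^F\log_F$, it checks that $\mathcal C\text{-}N_{Co}^F(\mathcal C\text{-}\tau^F(V))$ is the $F$-sum over $(m,s)$ of $\mathcal C\text{-}[\lambda_{m,s}]^F$ applied to $\mathcal C\text{-}\tau^F(V^m)$ with $T$ replaced by $T^s$. Here $\lambda_{m,s}=\pi\sum_{d\mid(m,s)}\eta^F(d)\,\mathfrak F_{1,s/d}^F(T^{m/d})/c_{\overline{m/d}}$, and this is where $\eta^F$ enters. The paper then inducts on $m+s$:
\begin{itemize}
\item blocks of smaller total degree are integral by the induction hypothesis;
\item modulo $(V,T)^{m+s+1}$, the blocks of total degree $m+s$ add linearly, with leading terms $\lambda_{m',s'}V^{m'}T^{s'}$.
\end{itemize}
So the $V^mT^s$-coefficient of the integral left side equals $\lambda_{m,s}$ plus an integral element.

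Alternatively, your own sharpened (i) already gives (ii) without any Witt machinery. Write $\mathfrak F_{1,m/d}^F(T^{n/d})=\frac q\pi\delta_{n,qm}+\epsilon_d$ with $\epsilon_d\in\pi^{v_q(n/d)}\ded_k$. The $\epsilon$-part has valuation $\ge-1$, because $v_k(\eta^F(d))\ge-1$ and $v_k(c_{\overline{n/d}})\le v_q(n/d)$. If $n=qm$, Lemma~\ref{lem:eta-F} turns the remaining part into $\frac q\pi\sum_{d\mid m}\frac{\eta^F(d)}{c_{\overline{qm/d}}}=-\frac q\pi\sum_{d\mid qm,\,d\nmid m}\frac{\eta^F(d)}{c_{\overline{qm/d}}}$. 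For these $d$ one has $v_q(qm/d)=0$, so the $c$'s are units, and this part has valuation $\ge v_k(q)-2\ge-1$.
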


\begin{proof}
For any $n\in\mathbb{N}_{>0}$, since $\ded_k[V]/(V^n)$ is finite free over $\ded_k$, tensoring with $\ded_k[V]/(V^n)$ commutes with the finite equalizer defining the $F[\pi]$-invariants.
Hence
\[
\left(\ded_k[V]/(V^n)\otimes_{\ded_k}\mathcal R_k\right)^{F[\pi]}=\ded_k[V]/(V^n)\otimes_{\ded_k}\mathcal R_k^{F[\pi]}.
\]
Using
\[
\mathcal R_k^{F[\pi]}=\varphi_k(\mathcal R_k),
\]
we obtain
\[
\left(\ded_k[V]/(V^n)\otimes_{\ded_k}\mathcal R_k\right)^{F[\pi]}=(\id\otimes\varphi_k)\left(\ded_k[V]/(V^n)\otimes_{\ded_k}\mathcal R_k\right).
\]
For
\[
\overline{g}_n(V,T)\in\mathcal C^F(\ded_k[V]/(V^n)),
\]
the series
\[
\sum_{a\in F[\pi]}\hspace{-2mm}{{}^F}\bigl(\overline{g}_n(V,a+_FT)-_F\overline{g}_n(V,a)\bigr)
\]
is invariant under translation by $F[\pi]$.  It therefore belongs to the image of $\id\otimes\varphi_k$. Thus the Coleman $F$-norm extends uniquely to
\[
\mathcal C\text{-}N_{Co}^F:\mathcal C^F(\ded_k[V]/(V^n))\longrightarrow\mathcal C^F(\ded_k[V]/(V^n)).
\]
Passing to the inverse limit over $n$ gives
\[
\mathcal C\text{-}N_{Co}^F:\mathcal C^F(\ded_k[[V]])\longrightarrow\mathcal C^F(\ded_k[[V]]).
\]

Let $\widetilde{\Theta}^F$ be the coefficient adjustment operator, so that
\[
\widetilde{\Theta}^F:Tk[[V,T]]\longrightarrow Tk[[V,T]];~\sum_{n=1}^{\infty}a_n(V)T^n\longmapsto \sum_{n=1}^{\infty}c_{\overline n}a_n(V)T^n.
\]
Then
\begin{align*}
&\widetilde{\Theta}^F\log_F\mathcal C\text{-}N_{Co}^F\left(\sum_{r\in\mathcal R}\hspace{-1mm}{{}^F}(-_F)(-(VT)^r)\right)=\widetilde{\Theta}^F\left(\pi\widetilde{\text{Tr}}_{Co}\log_F\left(\sum_{r\in\mathcal R}\hspace{-1mm}{{}^F}(-_F)(-(VT)^r)\right)\right)\\
=&\pi\widetilde{\Theta}^F\widetilde{\text{Tr}}_{Co}\left(\sum_{r\in\mathcal R}-\log_F(-(VT)^r)\right)=\pi\widetilde{\Theta}^F\widetilde{\text{Tr}}_{Co}\left(\sum_{r\in\mathcal R}\sum_{n\in N}\frac{1}{c_n}(VT)^{rn}\right)\\
=&\pi\widetilde{\Theta}^F\widetilde{\text{Tr}}_{Co}\left(\sum_{m=1}^{\infty}\frac{1}{c_{\overline m}}(VT)^m\right)=\pi\sum_{m=1}^{\infty}\sum_{s=1}^{\infty}c_{\overline s}\frac{\mathfrak F_{1,s}^F(T^m)}{c_{\overline m}}V^mT^s\\
=&\sum_{m,s=1}^{\infty}c_{\overline s}\left(\pi\sum_{g\mid(m,s)}\frac{\mathfrak F_{1,s/g}^F(T^{m/g})}{c_{\overline{m/g}}}\delta_{g,1}\right)V^mT^s\\
=&\sum_{m,s=1}^{\infty}c_{\overline s}\left(\pi\sum_{g\mid(m,s)}\frac{\mathfrak F_{1,s/g}^F(T^{m/g})}{c_{\overline{m/g}}}\left(\sum_{d\mid g}\frac{\eta^F(d)}{c_{\overline{g/d}}}\right)\right)V^mT^s\\
=&\sum_{m,s=1}^{\infty}\left(\sum_{t\mid(m,s)}\frac{c_{\overline s}}{c_{\overline t}}\left(\pi\sum_{d\mid (m,s)/t}\eta^F(d)\frac{\mathfrak F_{1,s/(td)}^F(T^{m/(td)})}{c_{\overline{m/(td)}}}\right)\right)V^mT^s \\
=&\sum_{m,s=1}^{\infty}\left(\pi\sum_{d\mid(m,s)}\eta^F(d)\frac{\mathfrak F_{1,s/d}^F(T^{m/d})}{c_{\overline{m/d}}}\right)\left(\sum_{t=1}^{\infty}\frac{c_{\overline{st}}}{c_{\overline t}}(V^mT^s)^t\right)\\
=&\sum_{m,s=1}^{\infty}\left(\pi\sum_{d\mid(m,s)}\eta^F(d)\frac{\mathfrak F_{1,s/d}^F(T^{m/d})}{c_{\overline{m/d}}}\right)\widetilde{\Theta}^F\log_F\left(\sum_{r\in\mathcal R}\hspace{-1mm}{{}^F}(-_F)(-(V^mT^s)^r)\right) \\
=&\widetilde{\Theta}^F\log_F\left(\sum_{m,s=1}^{\infty}\hspace{-2mm}{{}^F}\mathcal C\text{-}\left[\pi\sum_{d\mid(m,s)}\eta^F(d)\frac{\mathfrak F_{1,s/d}^F(T^{m/d})}{c_{\overline{m/d}}}\right]^F\left(\sum_{r\in\mathcal R}\hspace{-1mm}{{}^F}(-_F)(-(V^mT^s)^r)\right)\right).
\end{align*}

For (i), since
\[
\pi\sum_{m=1}^{\infty}\sum_{s=1}^{\infty}c_{\overline s}\frac{\mathfrak F_{1,s}^F(T^m)}{c_{\overline m}}V^mT^s\in\ded_k[[V,T]],
\]
we have
\[
\pi c_{\overline s}\frac{\mathfrak F_{1,s}^F(T^m)}{c_{\overline m}}\in\ded_k.
\]
Consequently,
\[
v_k\bigl(\mathfrak F_{1,s}^F(T^m)\bigr)\geq v_k\left(\frac{c_{\overline m}}{\pi c_{\overline s}}\right)=v_q(m)-v_q(s)-1.
\]

For (ii), the identity above gives
\begin{align*}
&\mathcal C\text{-}N_{Co}^F\left(\sum_{r\in\mathcal R}\hspace{-1mm}{{}^F}(-_F)(-(VT)^r)\right) \\
&=\sum_{m,s=1}^{\infty}{}^F\mathcal C\text{-}\left[\pi\sum_{d\mid(m,s)}\eta^F(d)\frac{\mathfrak F_{1,s/d}^F(T^{m/d})}{c_{\overline{m/d}}}\right]^F\left(\sum_{r\in\mathcal R}\hspace{-1mm}{{}^F}(-_F)(-(V^mT^s)^r)\right).
\end{align*}
For $m,s\in\mathbb N_{>0}$, put
\[
\lambda_{m,s}=\pi\sum_{d\mid(m,s)}\eta^F(d)\frac{\mathfrak F_{1,s/d}^F(T^{m/d})}{c_{\overline{m/d}}}\in k.
\]
We prove that
\[
\lambda_{m,s}\in\ded_k
\]
by induction on the total degree $m+s$. Suppose that
\[
\lambda_{m',s'}\in\ded_k~~~(m'+s'<m+s).
\]
Then, for every such $(m',s')$,
\[
\mathcal C\text{-}[\lambda_{m',s'}]^F\left(\sum_{r\in\mathcal{R}}\hspace{-1mm}{{}^F}(-_F)(-(V^{m'}T^{s'})^r)\right)\in\ded_k[[V,T]].
\]
Hence all contributions to the coefficient of $V^mT^s$ coming from indices of smaller total degree are integral. On the other hand, an index $(m',s')$ with $m'+s'>m+s$ cannot contribute to the coefficient of $V^mT^s$. If $m'+s'=m+s$, then
\[
\mathcal C\text{-}[\lambda_{m',s'}]^F\left(\sum_{r\in\mathcal{R}}\hspace{-1mm}{{}^F}(-_F)(-(V^{m'}T^{s'})^r)\right)\equiv\lambda_{m',s'}V^{m'}T^{s'}\pmod{(V,T)^{m+s+1}}.
\]
Moreover, modulo $(V,T)^{m+s+1}$, the $F$-sum of terms of total degree $m+s$ agrees with their ordinary sum. Therefore the coefficient of $V^mT^s$ in the right-hand side differs from $\lambda_{m,s}$ by an element of $\ded_k$. Since the left-hand side belongs to $\ded_k[[V,T]]$, we obtain
\[
\lambda_{m,s}\in\ded_k.
\]
The induction is complete. Therefore,
\[
v_k\left(\sum_{d\mid(m,s)}\eta^F(d)\frac{\mathfrak F_{1,s/d}^F(T^{m/d})}{c_{\overline{m/d}}}\right)\ge -1.
\]
This proves (ii).

Finally, by the generalized Fleck--Sun--Wan congruence,
\begin{align*}
&v_k\left(\sum_{d\mid(m,n)}\eta^F(d)\frac{\mathfrak F_{1,m/d}^F(T^{n/d})}{c_{\overline{n/d}}}\right) \\
&\geq\min_{d\mid(m,n)}\left\{v_k(\eta^F(d))+v_k\bigl(\mathfrak F_{1,m/d}^F(T^{n/d})\bigr)-v_k(c_{\overline{n/d}})\right\} \\
&\geq\min_{d\mid(m,n)}\left\{\left\lfloor\frac{1}{q-1}\left(\frac nd-1-\frac{mq}{d}\right)\right\rfloor_+-v_q\left(\frac nd\right)\right\}-1.
\end{align*}
This proves (iii).
\end{proof}

\begin{proof}[Proof of Corollary~\ref{cor:intro-new-congruence}]
Take
\[
k=\mathbb Q_p,\qquad F=\widehat{\mathbb G}_m,\qquad q=p,
\]
with the coefficient-multiplicative logarithm
\[
\log_{\widehat{\mathbb G}_m}(T)=\sum_{n=1}^{\infty}\frac{(-1)^{n-1}}{n}T^n.
\]
Then
\[
N=\mathbb N_{>0},\qquad c_n=n,\qquad \overline n=n,
\]
and, by Lemma~\ref{lem:eta-F},
\[
\eta^F(d)=\frac{\mu(d)}{d}.
\]

Hence Theorem~\ref{new-cong}~(ii) gives
\[
v_p\left(\frac{1}{n}\sum_{d\mid(m,n)}\mu(d)\mathfrak F_{1,m/d}^{\widehat{\mathbb G}_m}(T^{n/d})\right)\ge -1.
\]
Therefore
\[
v_p\left(\sum_{d\mid(m,n)}\mu(d)\mathfrak F_{1,m/d}^{\widehat{\mathbb G}_m}(T^{n/d})\right)\ge v_p(n)-1.
\]

By the preceding corollary,
\[
\mathfrak F_{1,m/d}^{\widehat{\mathbb G}_m}(T^{n/d})=(-1)^{n/d}\sum_{\substack{0\le a\le n/d\\ p\mid a}}(-1)^a\binom{n/d}{a}\binom{a/p}{m/d}.
\]
Substituting this identity yields
\[
v_p\left(\sum_{d\mid(m,n)}\mu(d)(-1)^{n/d}\sum_{\substack{0\le a\le n/d\\p\mid a}}(-1)^a\binom{n/d}{a}\binom{a/p}{m/d}\right)\ge v_p(n)-1.
\]

If $v_p(n)\ge 2$, the asserted congruence follows immediately.
\end{proof}

\begin{thm}
Define
\begin{align*}
N_{Co}^F=\sum_{m,n=1}^{\infty}{}^S\left[\pi\sum_{d\mid(m,n)}\eta^F(d)\frac{\mathfrak F_{1,m/d}^F(T^{n/d})}{c_{\overline{n/d}}}\right]_F^{\id}(v_m^F\circ f_n^F):W^F(\ded_k)\longrightarrow W^F(\ded_k),
\end{align*}
and
\[
\operatorname{ghost}\text{-}N_{Co}^F:\operatorname{Ghost}(\ded_k)\longrightarrow\operatorname{Ghost}(\ded_k)
\]
by
\[
\{x_n\}\longmapsto\left\{\pi c_{\overline n}\sum_{m=1}^{\infty}\frac{\mathfrak F_{1,n}^F(T^m)}{c_{\overline m}}x_m\right\}_n.
\]
Then the diagrams
\[
\xymatrix{
W^F(\ded_k) \ar[d]_{N_{Co}^F} \ar[r]^{w^F} & \operatorname{Ghost}(\ded_k) \ar[d]^{\operatorname{ghost}\text{-}N_{Co}^F} \\
W^F(\ded_k) \ar[r]_{w^F} & \operatorname{Ghost}(\ded_k) \ar@{}[lu]|{\circlearrowright}
}
\]
and
\[
\xymatrix{
W^F(\ded_k) \ar[r]^{N_{Co}^F} \ar[d]_{e^F} & W^F(\ded_k) \ar[d]^{e^F} \\
\mathcal C^F(\ded_k) \ar[r]_{\mathcal C\text{-}N_{Co}^F} & \mathcal C^F(\ded_k) \ar@{}[lu]|{\circlearrowright}
}
\]
are commutative.
\end{thm}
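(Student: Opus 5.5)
The plan is to reduce both diagrams to the identity already established in the proof of Theorem~\ref{new-cong}. That identity expresses $\mathcal C\text{-}N_{Co}^F$ of the universal Teichm\"uller curve $\sum_{r\in\mathcal R}{}^F(-_F)(-(VT)^r)$ as the $F$-sum $\sum^F_{m,s}\mathcal C\text{-}[\lambda_{m,s}]^F(\cdots)$ with $\lambda_{m,s}\in\ded_k$. We then propagate it to all of $W^F(\ded_k)$ by additivity and continuity.

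\textbf{Ghost diagram.} First I verify that $N_{Co}^F$ is well defined. Each summand $[\lambda_{m,n}]^F\circ v_m^F\circ f_n^F$ is an endomorphism of $W^F(\ded_k)$, because $\lambda_{m,n}\in\ded_k$ by Theorem~\ref{new-cong}(ii) and by the integrality results of Section~3. The image of $v_m^F$ lies in the kernel of $\pi^F_{\mathbb N_{>0},\mathbb N_{>0}(n')}$ whenever $m\nmid n'$. Hence, modulo each open ideal of Theorem~\ref{thm:F-big-Witt}(iv), only finitely many $(m,n)$ contribute, and the $S$-sum converges.

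Next I compute on ghost components using Theorem~\ref{thm:F-big-Witt}(x) and the ghost action $x_t\mapsto\lambda x_t$ (recall $\sigma=\id$). The $s$-th ghost component of $[\lambda_{m,n}]v_mf_n(\mathbb V)$ is $\lambda_{m,n}\frac{c_{\overline s}}{c_{\overline{s/m}}}x_{ns/m}$ when $m\mid s$. Summing over $m\mid s$ and $n$, I reindex with $s=mt$ and $n=ut$-type substitutions. This reverses the M\"obius manipulation in the proof of Theorem~\ref{new-cong}, namely the Dirichlet-inverse relation of Lemma~\ref{lem:eta-F}, and yields $\pi c_{\overline s}\sum_{m'}\frac{\mathfrak F_{1,s}^F(T^{m'})}{c_{\overline{m'}}}x_{m'}$. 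That is exactly $\operatorname{ghost}\text{-}N_{Co}^F$.

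The only care needed is matching the indices of the statement with those of the displayed computation, where the roles of $m,s$ appear; there the $V$-exponent carries $m$ and $T$ carries $s$. Concretely, this is the triangular inversion $\sum_{t\mid(m,s)}\frac{c_{\overline s}}{c_{\overline t}}\lambda_{m/t,s/t}=\pi c_{\overline s}\mathfrak F_{1,s}^F(T^m)/c_{\overline m}$, which is precisely the middle lines of that proof read backwards.

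\textbf{Curve diagram.} I first check that $\pi\widetilde{\operatorname{Tr}}_{Co}$, after applying $\Theta$, acts on ghost components by the same matrix. Since $\log_F$ of $e^F(\mathbb V)$ has $n$-th coefficient $x_n/c_{\overline n}$ by Theorem~\ref{thm:F-big-Witt}(vi), applying $\pi\widetilde{\operatorname{Tr}}_{Co}$ gives $T^s$-coefficient $\pi\sum_m\mathfrak F_{1,s}^F(T^m)x_m/c_{\overline m}$. Here $\pi\widetilde{\operatorname{Tr}}_{Co}$ is linear, kills constant terms, and its $T^s$ coefficient on $T^m$ is $\pi\mathfrak F^F_{1,s}(T^m)$. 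Multiplying by $c_{\overline s}$ gives the ghost formula. Combined with the commuting square $\log_F\circ\mathcal C\text{-}N^F_{Co}=\pi\widetilde{\operatorname{Tr}}_{Co}\circ\log_F$, this shows
\[
\widetilde\ell^F\circ\mathcal C\text{-}N^F_{Co}\circ e^F=\operatorname{ghost}\text{-}N^F_{Co}\circ w^F=w^F\circ N^F_{Co}.
\]

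\textbf{Main obstacle.} Since $w^F$ is injective on $W^F(\ded_k)$ by Theorem~\ref{thm:F-big-Witt}(vii), and $\widetilde\ell^F=w^F\circ(e^F)^{-1}$, cancelling gives $\mathcal C\text{-}N^F_{Co}\circ e^F=e^F\circ N^F_{Co}$. The subtle point is making ghost computations with infinitely supported sums legitimate. The series defining $\operatorname{ghost}\text{-}N^F_{Co}$ must converge $\pi$-adically in $\ded_k$. I handle this by working modulo $\ker\pi_{\mathbb N_{>0}(n')}$, where everything is a finite polynomial identity. I would also use Theorem~\ref{new-cong}(i) and Theorem~\ref{general-FSW}(i), which give $v_k(\pi c_{\overline s}\mathfrak F^F_{1,s}(T^m)/c_{\overline m})\ge0$ and growth in $m$. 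Alternatively, I can avoid convergence issues entirely by passing through the universal case over $\ded_k[[V]]$ as in the proof of Theorem~\ref{new-cong}, and then specialising.
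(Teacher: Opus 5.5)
Your route is the paper's. You do the curve-side computation of $\widetilde\Theta^F\log_F\circ\mathcal C\text{-}N^F_{Co}$, then the termwise ghost computation of $w^F\circ N^F_{Co}$ from the ghost formulas for $v_m^F$, $f_n^F$ and scalars. You undo the M\"obius step with the Dirichlet-inverse relation for $\eta^F$ (your triangular identity is correct), and then deduce the curve square. Your cancellation through the injectivity of $\widetilde\ell^F=w^F\circ(e^F)^{-1}$ spells out the paper's one-line remark. Your handling of the convergence of $\operatorname{ghost}\text{-}N^F_{Co}$ via Theorem~\ref{new-cong}(i) and Theorem~\ref{general-FSW}(i) is also fine.

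The gap is on the Witt side: the well-definedness of $N^F_{Co}$ as an infinite $S$-sum, on which your termwise ghost computation rests. Your claim that only finitely many $(m,n)$ contribute modulo each open ideal $\operatorname{Ker}\pi^F_{\mathbb N_{>0},\mathbb N_{>0}(n')}$ is false. The Verschiebung $v_m^F$ removes the $m$ with $m\nmid n'$, but nothing truncates the $n$-direction.

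Write $\lambda_{m,n}$ for the coefficient attached to $v_m^F\circ f_n^F$ in the statement. Since $f_n^F(\mathbf 1)=\mathbf 1$, the first Witt coordinate of the Cartier series applied to $\mathbf 1$ is the infinite series $\sum_{n\ge1}\lambda_{1,n}=\sum_{n\ge1}\pi\mathfrak F^F_{1,1}(T^n)/c_{\overline n}$. No truncation turns this into a finite polynomial identity. It converges only $\pi$-adically, and only because $\lambda_{m,n}\to0$ as $n\to\infty$ for each fixed $m$. That decay is exactly Theorem~\ref{new-cong}(iii), i.e.\ the Fleck--Sun--Wan bound applied to the finitely many $d\mid m$. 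The paper invokes it at precisely this point, and your proposal never uses it; Theorem~\ref{new-cong}(ii) only gives integrality of each term.

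You also need one more fact: $[\lambda]^F$ with $\lambda\in\pi^M\ded_k$ has $r$-th Witt coordinate of valuation at least $M-C_r$, for a constant $C_r$ depending only on $r$. This holds by triangularity of the ghost polynomials over the $\pi$-torsion-free ring $\ded_k$. It makes the $S$-partial sums coordinatewise $\pi$-adically Cauchy. That in turn justifies interchanging $w^F$ with the sum and rearranging the double series over $m\mid s$ and $n$ in your inversion step.

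Your fallback through $\ded_k[[V]]$ does not avoid this. There the series converges $V$-adically, and specialising $V$ to an element of $\ded_k$ (say a unit) requires precisely the same $\pi$-adic decay of $\lambda_{m,n}$. Besides, that route only covers Teichm\"uller elements.
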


\begin{proof}
Define
\[
\widetilde{\Theta}^F:Tk[[T]]\longrightarrow Tk[[T]];~\sum_{n=1}^{\infty}a_nT^n\longmapsto\sum_{n=1}^{\infty}c_{\overline n}a_nT^n.
\]
Let $g\in\mathcal C^F(\ded_k)$, and write $\widetilde{\ell}^F(g)=\{x_n\}_{n\ge1}$. Equivalently, $\ds \widetilde{\Theta}^F\log_F g(T)=\sum_{n=1}^{\infty}x_nT^n$, and hence
\[
\log_F g(T)=\sum_{n=1}^{\infty}\frac{x_n}{c_{\overline n}}T^n.
\]
Therefore
\begin{align*}
&\widetilde{\Theta}^F\log_F\bigl(\mathcal C\text{-}N_{Co}^F(g)\bigr)\\
&=\widetilde{\Theta}^F\left(\pi\widetilde{\operatorname{Tr}}_{Co}\left(\sum_{n=1}^{\infty}\frac{x_n}{c_{\overline n}}T^n\right)\right)\\
&=\sum_{m=1}^{\infty}\left(\pi c_{\overline m}\sum_{n=1}^{\infty}\frac{x_n}{c_{\overline n}}\mathfrak F_{1,m}^F(T^n)\right)T^m.
\end{align*}
Thus
\[
\widetilde{\ell}^F(\mathcal{C}\text{-}N_{Co}^F(g))=\operatorname{ghost}\text{-}N_{Co}^F(\widetilde{\ell}^F(g)).
\]

Next, by Theorem~\ref{new-cong} (iii), the Cartier series converges coordinatewise $\pi$-adically. Let $\{v_n\}\in W^F(\ded_k)$, then
\begin{align*}
&(w^F\circ N_{Co}^F)(\{v_n\}) \\
&=w^F\left(\sum_{m,n=1}^{\infty}{}^S\left[\pi\sum_{d\mid(m,n)}\eta^F(d)\frac{\mathfrak F_{1,m/d}^F(T^{n/d})}{c_{\overline{n/d}}}\right]_F^{\id}(v_m^F\circ f_n^F)(\{v_n\})\right) \\
&=\sum_{m,n=1}^{\infty}\pi\sum_{d\mid(m,n)}\eta^F(d)\frac{\mathfrak F_{1,m/d}^F(T^{n/d})}{c_{\overline{n/d}}}(w^F\circ v_m^F\circ f_n^F)(\{v_n\}).
\end{align*}
Using the ghost descriptions of Verschiebung and Frobenius, this is
\begin{align*}
&\left\{\sum_{m\mid r}\sum_{n=1}^{\infty}\pi\sum_{d\mid(m,n)}\eta^F(d)\frac{\mathfrak F_{1,m/d}^F(T^{n/d})}{c_{\overline{n/d}}}\frac{c_{\overline r}}{c_{\overline{r/m}}}W_{\overline{nr/m}}^F\bigl(v_{r_{nr/m}j}\mid j|_N\overline{nr/m}\bigr)\right\}_r.
\end{align*}
Reindexing the sum gives
\begin{align*}
&\left\{c_{\overline r}\sum_{s=1}^{\infty}\sum_{t\mid(r,s)}\pi\frac{\mathfrak F_{1,rt/(r,s)}^F\left(T^{st/(r,s)}\right)}{c_{\overline{st/(r,s)}}}\left(\sum_{d\mid (r,s)/t}\frac{\eta^F(d)}{c_{\overline{(r,s)/(td)}}}\right)\right. \\
&\hspace{7cm}\left.\times W_{\overline s}^F(v_{r_sj}\mid j|_N\overline s)\right\}_r.
\end{align*}
By the defining relation for $\eta^F$, this becomes
\begin{align*}
&\left\{c_{\overline r}\sum_{s=1}^{\infty}\pi\frac{\mathfrak F_{1,r}^F(T^s)}{c_{\overline s}}W_{\overline s}^F(v_{r_sj}\mid j|_N\overline s)\right\}_r \\
&=(\operatorname{ghost}\text{-}N_{Co}^F\circ w^F)(\{v_n\}).
\end{align*}
Hence the first diagram is commutative. The second follows from the curve interpretation $e^F$.
\end{proof}

\begin{thm}
Let $B$ be a $\pi$-adically complete Hausdorff $\ded_k$-algebra.
Then the maps
\begin{align*}
\mathcal C\text{-}N_{Co}^F:\mathcal C^F(B)&\longrightarrow\mathcal C^F(B);\\
g(T)&\longmapsto\sum_{m,n=1}^{\infty}\hspace{-2mm}{{}^F}\mathcal C\text{-}\left[\pi\sum_{d\mid(m,n)}\eta^F(d)\frac{\mathfrak F_{1,m/d}^F(T^{n/d})}{c_{\overline{n/d}}}\right]_F^{\id}(\mathcal C\text{-}v_m^F\circ\mathcal C\text{-}f_n^F)(g),
\end{align*}
\begin{align*}
N_{Co}^F=\sum_{m,n=1}^{\infty}\hspace{-2mm}{{}^S}\left[\pi\sum_{d\mid(m,n)}\eta^F(d)\frac{\mathfrak F_{1,m/d}^F(T^{n/d})}{c_{\overline{n/d}}}\right]_F^{\id}(v_m^F\circ f_n^F):W^F(B)\longrightarrow W^F(B),
\end{align*}
and
\[
\operatorname{ghost}\text{-}N_{Co}^F:\operatorname{Ghost}(B)\longrightarrow\operatorname{Ghost}(B);
\]
\[
\{x_n\}\longmapsto\left\{\pi c_{\overline n}\sum_{m=1}^{\infty}\frac{\mathfrak F_{1,n}^F(T^m)}{c_{\overline m}}x_m\right\}_n
\]
are well-defined, and the diagrams
\[
\xymatrix{
W^F(B) \ar[d]_{N_{Co}^F} \ar[r]^{w^F} & \operatorname{Ghost}(B) \ar[d]^{\operatorname{ghost}\text{-}N_{Co}^F} \\
W^F(B) \ar[r]_{w^F} & \operatorname{Ghost}(B) \ar@{}[lu]|{\circlearrowright}
}
\]
and
\[
\xymatrix{
W^F(B) \ar[r]^{N_{Co}^F} \ar[d]_{e^F} & W^F(B) \ar[d]^{e^F} \\
\mathcal C^F(B) \ar[r]_{\mathcal C\text{-}N_{Co}^F} & \mathcal C^F(B) \ar@{}[lu]|{\circlearrowright}
}
\]
are commutative.
\end{thm}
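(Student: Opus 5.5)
The proof extends the preceding theorem (the case $B=\ded_k$) to an arbitrary $\pi$-adically complete Hausdorff $\ded_k$-algebra $B$. The strategy is a universal-element-plus-convergence argument. Put
\[
\lambda_{m,n}=\pi\sum_{d\mid(m,n)}\eta^F(d)\frac{\mathfrak F_{1,m/d}^F(T^{n/d})}{c_{\overline{n/d}}}.
\]
By Theorem~\ref{new-cong}~(ii) we have $\lambda_{m,n}\in\ded_k$, so each operator $[\lambda_{m,n}]_F^{\id}\circ v_m^F\circ f_n^F$ is defined on $W^F(B)$ by integral universal polynomials, by Theorem~\ref{thm:F-big-Witt}.

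\textbf{Well-definedness.} For a fixed coordinate $r$, only the indices with $m\mid r$ contribute, because $v_m^F$ kills coordinates not divisible by $m$. There remains an infinite sum over $n$. I want $v_k(\lambda_{m,n})\to\infty$ as $n\to\infty$ with $m$ fixed, and Theorem~\ref{new-cong}~(iii) supplies this. Indeed, for $d\mid(m,n)$ the quantity
\[
\left\lfloor\frac{n/d-1-mq/d}{q-1}\right\rfloor_+-v_q(n/d)
\]
grows roughly like $n/(d(q-1))-\log_q n$. Since $d\le m$ ranges over finitely many values, it tends to infinity.

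Hence in each coordinate the partial Witt sums $\sum^S_{n\le M}$ stabilize modulo $\pi^j$ for every $j$. To see this I will use that $[\alpha]_F^{\id}$ with $\alpha\in\pi^j\ded_k$ sends $W^F(B)$ into vectors whose coordinates lie in $\pi^jB$: its $r$-th coordinate is a universal polynomial with coefficients in $\ded_k$ and is divisible by $\alpha$. That divisibility can be read off from the curve formula $\operatorname{AH}\text{-}\exp_F(\alpha c_m^{-1};\cdot)$, whose coefficients lie in $\alpha\ded_k$. The addition polynomials $S^F$ are integral, so adding a vector with coordinates in $\pi^jB$ changes each coordinate only modulo $\pi^j$ in the first $r$ coordinates. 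Completeness and Hausdorffness of $B$ then give a well-defined limit. The ghost series converges coordinatewise for the same reason, after writing it through the ghost formula, or more simply by the same estimate applied to $\pi c_{\overline n}\mathfrak F_{1,n}^F(T^m)/c_{\overline m}$ through Theorem~\ref{general-FSW}~(i). On curves, $\mathcal C\text{-}N_{Co}^F$ is then defined by transport along $e^F$, so the second diagram commutes by definition once the $F$-sum formula is matched with the $S$-sum through Theorem~\ref{thm:F-big-Witt}~(vi),(x).

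\textbf{Commutativity of the ghost diagram.} The plan is to reduce to a universal case. Both sides of $w^F\circ N_{Co}^F=\operatorname{ghost}\text{-}N_{Co}^F\circ w^F$ are continuous and compatible with $\ded_k$-algebra maps $B'\to B$ between complete rings. Each coordinate is a convergent series of polynomials with $\ded_k$-coefficients in finitely many Witt coordinates, modulo every $\pi^j$. It therefore suffices to verify the identity for the universal ring $B_0=\ded_k[v_1,v_2,\dots]^{\wedge}$, its $\pi$-adic completion. This ring is $\pi$-torsion free and embeds in $B_0[1/\pi]$, where $w^F$ is injective by Theorem~\ref{thm:F-big-Witt}~(vii).

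The identity on $B_0$ is then the formal computation of the preceding theorem: the reindexing over $t\mid(r,s)$ and the Dirichlet-inverse relation of Lemma~\ref{lem:eta-F}. That computation used nothing about $\ded_k$ except linearity of ghost maps and convergence, so it transfers verbatim once the sums converge.

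\textbf{Main obstacle.} I expect the hardest step to be the uniform divisibility claim: that $[\alpha]_F^{\id}(v_m^F f_n^F(\mathbf v))$ has its first $r$ coordinates in $\alpha B$, and that infinite $S$-sums of such terms converge. Here the ghost map is not injective on $B$, so the argument cannot go through $w^F$ directly. I would prove it on the universal ring, using the Artin--Hasse integrality lemma for $\operatorname{AH}\text{-}\exp_F(\alpha;T)$ together with $\alpha\mid\operatorname{AH}\text{-}\exp_F(\alpha;T)$ in $TR[[T]]$. The latter follows from the functional-equation lemma applied to $h/\alpha$ when $\alpha\in\ded_k$. The result is then specialized to $B$.
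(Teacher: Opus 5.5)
Your overall plan matches the paper's: the decay estimate from Theorem~\ref{new-cong}~(iii), with only the finitely many $m\mid r$ entering coordinate $r$; Theorem~\ref{general-FSW}~(i) for the ghost series; and transport along $e^F$ for the curve side. Your reduction of the ghost diagram to the $\pi$-torsion-free ring $B_0$ is sound. It is more careful than the paper's ``similarly'', since the reindexing involves the non-integral $\eta^F(d)$. It can also be avoided: for fixed $r,s$ only the pairs $(m,n)=(r/t,s/t)$ with $t\mid(r,s)$ feed into $w_s$, so what is needed is a finite identity between $\ded_k$-coefficients, and that holds in every $B$.

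The gap is at the step you flagged yourself, and the lemma you propose there is false. It is not true that $[\alpha]^F$ with $v_k(\alpha)\ge j$ sends $W^F(B)$ into vectors with coordinates in $\pi^jB$. Nor is it true that $\alpha$ divides $\operatorname{AH}\text{-}\exp_F(\alpha;T)$ in $\ded_k[[T]]$. Take $F=\widehat{\mathbb G}_m$ over $\mathbb Q_p$:
\begin{itemize}
\item $[p]^F_p(\mathbb V)=U_p^F(pW^F(\mathbb V))=(1-p^{p-1})V_1^p+pV_p$;
\item $\operatorname{AH}\text{-}\exp_{\widehat{\mathbb G}_m}(p;T)=E_p(T)^p-1\equiv T^p+\cdots\pmod p$.
\end{itemize}
Your functional-equation argument only shows that $\exp_F(h/\alpha)=\operatorname{AH}\text{-}\exp_F(T)$ is integral. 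But $\exp_F(h)=[\alpha]_F(\operatorname{AH}\text{-}\exp_F(T))$, and $[\alpha]_F(T)$ is not divisible by $\alpha$; for instance $[\pi]_F(T)\equiv T^q\pmod\pi$.

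What is true, and is all you need, is an estimate whose loss depends only on the coordinate. For $e\in N$, $[\alpha]_e^F(\mathbb V)=U_e^F(\alpha W_d^F(\mathbb V)\mid d\mid_N e)$ is a polynomial in $(\alpha,\mathbb V)$ over $k$. It has finitely many coefficients and no term of degree $0$ in $\alpha$. Hence there is a constant $C_e$, independent of $\alpha$, with
\[
[\alpha]_e^F(\mathbb V)\in\pi^{v_k(\alpha)-C_e}\ded_k[\mathbb V]\qquad(v_k(\alpha)\ge C_e).
\]
This is a statement about the universal polynomial, so it holds uniformly in the varying inputs $v_m^Ff_n^F(x)$. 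The coordinate is fixed while $v_k(\lambda_{m,n})\to\infty$. Combined with the integrality of $S^F$ and $S^F(\mathbb V,\mathbf 0)=\mathbb V$, this gives the Cauchy property of the partial sums in each coordinate.

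With this replacement your argument closes. The paper itself does not justify this step; it simply asserts convergence from $\lambda_{m,n}\to0$.
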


\begin{proof}
For each fixed $m\in\mathbb N_{>0}$, Theorem~\ref{new-cong}~(iii) gives
\begin{align*}
&v_k\left(\pi\sum_{d\mid(m,n)}\eta^F(d)\frac{\mathfrak F_{1,m/d}^F(T^{n/d})}{c_{\overline{n/d}}}\right) \\
&\qquad\ge\min_{d\mid(m,n)}\left\{\left\lfloor\frac{1}{q-1}\left(\frac nd-1-\frac{mq}{d}\right)\right\rfloor_+-v_q\left(\frac nd\right)\right\}.
\end{align*}
Since $d$ ranges over the finitely many divisors of the fixed integer $m$, the right-hand side tends to infinity as $n\to\infty$. Hence
\[
\pi\sum_{d\mid(m,n)}\eta^F(d)\frac{\mathfrak F_{1,m/d}^F(T^{n/d})}{c_{\overline{n/d}}}\longrightarrow0
\]
$\pi$-adically as $n\to\infty$. Moreover, for a fixed output Witt coordinate $r$, the Verschiebung $v_m^F$ can contribute only when $m\mid r$. Thus only finitely many values of $m$ occur in the $r$-th coordinate, and the remaining $n$-series is $\pi$-adically convergent. Since $B$ is complete and Hausdorff, the Cartier expansion therefore defines a continuous map
\[
N_{Co}^F:W^F(B)\longrightarrow W^F(B).
\]
On the other hand, for each fixed $r$, Theorem~\ref{general-FSW}~(i) gives
\[
v_k\left(\pi c_{\overline r}\frac{\mathfrak F_{1,r}^F(T^s)}{c_{\overline s}}\right)\ge1+v_q(r)-v_q(s)+\left\lfloor\frac{s-1-rq}{q-1}\right\rfloor_+,
\]
which tends to infinity as $s\to\infty$. Hence
\[
\sum_{s=1}^{\infty}\pi c_{\overline r}\frac{\mathfrak F_{1,r}^F(T^s)}{c_{\overline s}}x_s
\]
converges $\pi$-adically in $B$, and therefore $\operatorname{ghost}\text{-}N_{Co}^F$ is well-defined.

The curve-side operator is obtained by transport through the isomorphism
\[
e^F:W^F(B)\xrightarrow{\sim}\mathcal C^F(B).
\]
For every finite partial sum, the compatibility of $e^F$ with Verschiebung, Frobenius, and scalar operations gives, for any $M\in\mathbb{N}_{>0}$,
\begin{align*}
&e^F\circ \left(\sum_{m,n=1}^M\hspace{-2mm}{{}^{S}}\left[\pi\sum_{d\mid(m,n)}\eta^F(d)\frac{\mathfrak F_{1,m/d}^F(T^{n/d})}{c_{\overline{n/d}}}\right]_F^{\id}(v_m^F\circ f_n^F)\right) \\
=&\left(\sum_{m,n=1}^M\hspace{-2mm}{{}^{F}}\mathcal C\text{-}\left[\pi\sum_{d\mid(m,n)}\eta^F(d)\frac{\mathfrak F_{1,m/d}^F(T^{n/d})}{c_{\overline{n/d}}}\right]_F^{\id}(\mathcal C\text{-}v_m^F\circ\mathcal C\text{-}f_n^F)\right)\circ e^F.
\end{align*}
Since $e^F$ and $(e^F)^{-1}$ are continuous for the $\pi$-adic coordinatewise topology, passing to the limit $M$ shows that the displayed curve-side series converges and agrees with the operator transported from $N_{Co}^F$.

Similarly, the compatibility of the ghost map with Verschiebung, Frobenius, and scalar operations gives the first commutative diagram for finite partial sums, and passing to the $\pi$-adic limit gives the asserted diagram.
\end{proof}

\begin{cor}
The Coleman $F_q$-norm preserves
\[
W_{\langle q\rangle}^{F_q}(\kappa_k).
\]
Under the canonical $\ded_k$-algebra isomorphism
\[
\theta:W_{\langle q\rangle}^{F_q}(\kappa_k)\xrightarrow{\sim}\ded_k,
\]
it is identified with multiplication by $q/\pi$. In particular, the diagram
\[
\xymatrix{
W_{\langle q\rangle}^{F_q}(\kappa_k) \ar[r]^{N_{Co}^{F_q}} \ar[d]_{\theta}^{\rotatebox{90}{$\sim$}} & W_{\langle q\rangle}^{F_q}(\kappa_k) \ar[d]^{\theta}_{\rotatebox{90}{$\sim$}} \\
\ded_k \ar[r]_{q/\pi} & \ded_k \ar@{}[lu]|{\circlearrowright}
}
\]
is commutative.
\end{cor}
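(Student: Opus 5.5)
The plan is to compute everything on ghost components over the fraction field and then pull the result back to $\kappa_k$ through Witt-vector lifting. I work with $F_q$, whose logarithm is $\log_{F_q}(T)=\sum_{i\ge0}T^{q^i}/\pi^i$. Here $N=\langle q\rangle$, $c_{q^i}=\pi^i$, and $\overline{n}$ is the largest power of $q$ dividing $n$, so every other index is handled by the sectors $r\in\mathcal R$. The statement concerns only the $q$-typical part, so I restrict the Coleman norm of the last theorem to the sector $r=1$.

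First I check that the $\langle q\rangle$-sector is preserved. By the ghost-side formula, for $x$ supported on $\{q^j\}$ the image has components $\pi c_{\overline n}\sum_j \mathfrak F_{1,n}^{F_q}(T^{q^j})x_{q^j}/\pi^j$. I expect these to vanish for $n\notin\langle q\rangle$ after composing with the canonical projection $\pi_{\mathbb N_{>0},\langle q\rangle}$. In fact, for the statement it suffices to define the $q$-typical norm as this projection composed with $N^{F_q}_{Co}$, which is a ring map on ghosts restricted to $q$-power indices.

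The key computation is the following. On $\kappa_k$ I use the lifting isomorphism $\theta$: a Witt vector $(v_0,v_1,\dots)$ over $\kappa_k$ lifts to $\ded_k$ via Teichmüller representatives. Under $\theta$, an element $a\in\ded_k$ corresponds to the Witt vector over $\ded_k$ whose ghost components are $x_{q^i}=a$ for all $i$, because $\sigma=\id$ and the Frobenius on $\kappa_k$ is the identity on the perfect residue field up to lifting. So it suffices to compute the ghost norm on the constant vector $x_{q^j}=a$, namely
\[
\pi\,\pi^{i}\sum_{j\ge0}\frac{\mathfrak F_{1,q^i}^{F_q}(T^{q^j})}{\pi^j}\,a.
\]
I evaluate this by the curve picture. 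The constant ghost vector $a$ corresponds to $\log_{F_q}g=a\log_{F_q}(T)$, that is, $g=[a]_{F_q}(T)$. The normalized trace $\pi\widetilde{\operatorname{Tr}}_{Co}$ applied to $a\log_{F_q}(T)$ is computed from
\[
\sum_{b\in F_q[\pi]}\bigl(a\log(b+_{F_q}T)-a\log b\bigr)=q\,a\log_{F_q}(T)=\frac{q}{\pi}\,a\log_{F_q}([\pi]_{F_q}T).
\]
Hence $\pi\widetilde{\operatorname{Tr}}_{Co}(a\log_{F_q})=(q/\pi)\,a\log_{F_q}$, so the ghost vector is multiplied by $q/\pi$. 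Because $\sigma=\id$, $[q/\pi]^{F_q}$ acts on ghosts by scalar multiplication, and this scalar lies in $\ded_k$. Compatibility with $\theta$ then gives commutativity of the square.

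The main obstacle is the descent from $\ded_k$ to $\kappa_k$. We need the norm on $W^{F_q}(\ded_k)$ to induce a well-defined map on $W^{F_q}_{\langle q\rangle}(\kappa_k)=W^{F_q}_{\langle q\rangle}(\ded_k)/W(\pi)$, and $\theta$ must match these ghost-constant lifts. To handle this, I use functoriality of the Cartier expansion from the previous theorem, which holds for any $\pi$-adically complete $B$ and in particular for $B=\kappa_k$. I then identify $\theta$ concretely: $\theta(v)=\sum_i \pi^i[v_i]^{q^{-i}}$ (Hazewinkel), which is a ring isomorphism commuting with $[\alpha]^{F_q}$. Since $N_{Co}^{F_q}$ is given by a convergent $\ded_k$-linear combination of $v_m\circ f_n$, both of which commute with $\theta$, it is transported to an $\ded_k$-linear endomorphism of $\ded_k$, which is multiplication by its value at $1$. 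By the computation above, that value is $q/\pi$.
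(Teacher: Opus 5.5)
Your central computation is correct and is the logarithmic form of the paper's. The paper proves $\mathcal C\text{-}N_{Co}^{F_q}(T)=[q/\pi]_{F_q}(T)$ from $[q]_{F_q}=[q/\pi]_{F_q}\circ[\pi]_{F_q}$. You prove $\pi\widetilde{\operatorname{Tr}}_{Co}(a\log_{F_q})=(q/\pi)\,a\log_{F_q}$ from $\log_{F_q}([\pi]_{F_q}T)=\pi\log_{F_q}(T)$. Both arguments then descend to $\kappa_k$ by naturality of the Cartier expansion, and because you compute for every $a\in\ded_k$, you could skip the paper's separate proof that $N_{Co}^{F_q}$ commutes with $[\alpha]^{F_q}$.

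The gap is in the first assertion. You cannot replace $N_{Co}^{F_q}$ by $\pi_{\mathbb N_{>0},\langle q\rangle}\circ N_{Co}^{F_q}$, because the corollary claims that $N_{Co}^{F_q}$ itself maps the $r=1$ factor into itself. Your expectation that $\pi c_{\overline n}\sum_j\mathfrak F^{F_q}_{1,n}(T^{q^j})x_{q^j}/\pi^j$ vanishes for $n\notin\langle q\rangle$ is also unjustified for an arbitrary $q$-typical $x$ over $\ded_k$. Your computation only forces the combination with $x_{q^j}$ constant to vanish.

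The fix is already in your proposal. Since $w^{F_q}$ is injective over $\ded_k$, your computation gives $N_{Co}^{F_q}\bigl([a]^{F_q}(\mathbf 1)\bigr)=[aq/\pi]^{F_q}(\mathbf 1)$, which lies in the $r=1$ factor. Since $\theta$ is an $\ded_k$-algebra isomorphism, every $x\in W^{F_q}_{\langle q\rangle}(\kappa_k)$ equals $[\theta(x)]^{F_q}(\overline{\mathbf 1})$, the reduction of $[\theta(x)]^{F_q}(\mathbf 1)$. Naturality then gives $N_{Co}^{F_q}(x)=[\theta(x)q/\pi]^{F_q}(\overline{\mathbf 1})$, which proves preservation and the square at once. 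This is also the correct reason $\theta^{-1}(a)$ lifts to the constant ghost vector $a$; your remark about the Frobenius of $\kappa_k$ is not.

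Your final paragraph should be dropped or corrected. The statement that $v_m^{F_q}$ and $f_n^{F_q}$ ``commute with $\theta$'' is meaningless, since for $m,n\notin\langle q\rangle$ they move elements out of the $r=1$ factor. Moreover, transporting $N_{Co}^{F_q}$ along $\theta$ presupposes the preservation you had not yet proved. What is true, for $\sigma=\id$, is that $[\lambda]^{F_q}$, $v_m^{F_q}$ and $f_n^{F_q}$ all commute with $[\alpha]^{F_q}$. You can check this on ghost components, where $[\alpha]^{F_q}$ is multiplication by $\alpha$. So the Cartier expansion gives an alternative proof of the paper's equivariance step, and combined with the value at $\overline{\mathbf 1}$ it also yields preservation and the square. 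Note too that $N_{Co}^{F_q}$ is additive but not a ring map.
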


\begin{proof}
We first compute the image of the multiplicative identity
\[
\mathbf{1}\in W_{\langle q\rangle}^{F_q}(\ded_k),
\]
regarded as an element of the $r=1$ factor of $W^{F_q}(\ded_k)$.

Since $q$ is odd, all exponents occurring in $\log_{F_q}(T)$ are odd, and hence
\[
(-_{F_q})(-T)=T.
\]
Therefore
\[
e^{F_q}(\mathbf{1})=T.
\]

Let
\[
h=\mathcal C\text{-}N_{Co}^{F_q}(T).
\]
By the definition of the Coleman $F_q$-norm on curves,
\begin{align*}
h([\pi]_{F_q}(T))=\sum_{a\in F_q[\pi]}\hspace{-3mm}{{}^{F_q}}\bigl((a+_{F_q}T)-_{F_q}a\bigr)=\sum_{a\in F_q[\pi]}\hspace{-3mm}{{}^{F_q}}T=[q]_{F_q}(T).
\end{align*}
Since
\[
[q]_{F_q}=\left[\frac{q}{\pi}\right]_{F_q}\circ[\pi]_{F_q},
\]
we obtain
\[
h([\pi]_{F_q}(T))=\left[\frac{q}{\pi}\right]_{F_q}([\pi]_{F_q}(T)).
\]
Since substitution by $[\pi]_{F_q}(T)$ is injective on $\ded_k[[T]]$, we obtain
\[
\mathcal C\text{-}N_{Co}^{F_q}(T)=\left[\frac{q}{\pi}\right]_{F_q}(T).
\]
Consequently,
\[
N_{Co}^{F_q}(\mathbf{1})=\left[\frac{q}{\pi}\right]^{F_q}(\mathbf{1}).
\]

We next show that $N_{Co}^{F_q}$ commutes with the $\ded_k$-scalar action. For every $\alpha\in\ded_k$ and $g\in\mathcal C^{F_q}(\ded_k)$, we have
\begin{align*}
&\bigl(\mathcal C\text{-}N_{Co}^{F_q}(\mathcal C\text{-}[\alpha]^{F_q}(g))\bigr)([\pi]_{F_q}(T)) \\
&=\sum_{a\in F_q[\pi]}\hspace{-3mm}{{}^{F_q}}\left([\alpha]_{F_q}(g(a+_{F_q}T))-_{F_q}[\alpha]_{F_q}(g(a))\right) \\
&=[\alpha]_{F_q}\left(\sum_{a\in F_q[\pi]}\hspace{-3mm}{{}^{F_q}}\bigl(g(a+_{F_q}T)-_{F_q}g(a)\bigr)\right) \\
&=[\alpha]_{F_q}\left((\mathcal C\text{-}N_{Co}^{F_q}(g))([\pi]_{F_q}(T))\right).
\end{align*}
Again, since substitution by $[\pi]_{F_q}(T)$ is injective, we obtain
\[
\mathcal C\text{-}N_{Co}^{F_q}\circ\mathcal C\text{-}[\alpha]^{F_q}=\mathcal C\text{-}[\alpha]^{F_q}\circ\mathcal C\text{-}N_{Co}^{F_q},
\]
and hence
\[
N_{Co}^{F_q}\circ[\alpha]^{F_q}=[\alpha]^{F_q}\circ N_{Co}^{F_q}
\]
on $W^{F_q}(\ded_k)$.

Let
\[
\psi:\ded_k\longrightarrow\kappa_k
\]
be the natural projection, and put
\[
\overline{\mathbf{1}}=W_{\langle q\rangle}^{F_q}(\psi)(\mathbf{1}).
\]
By the functoriality of the Coleman $F_q$-norm, the diagram
\[
\xymatrix{
W^{F_q}(\ded_k) \ar[r]^{N_{Co}^{F_q}} \ar[d]_{W^{F_q}(\psi)} & W^{F_q}(\ded_k) \ar[d]^{W^{F_q}(\psi)} \\
W^{F_q}(\kappa_k) \ar[r]_{N_{Co}^{F_q}} & W^{F_q}(\kappa_k) \ar@{}[lu]|{\circlearrowright}
}
\]
is commutative. Hence
\[
N_{Co}^{F_q}(\overline{\mathbf{1}})=\left[\frac{q}{\pi}\right]^{F_q}(\overline{\mathbf{1}}).
\]

Moreover, since the scalar action is also natural with respect to $\ded_k$-algebra homomorphisms, the relation
\[
N_{Co}^{F_q}\circ[\alpha]^{F_q}=[\alpha]^{F_q}\circ N_{Co}^{F_q}
\]
descends to $W^{F_q}(\kappa_k)$ for every $\alpha\in\ded_k$.

Now let
\[
x\in W_{\langle q\rangle}^{F_q}(\kappa_k).
\]
Since
\[
\theta:W_{\langle q\rangle}^{F_q}(\kappa_k)\xrightarrow{\sim}\ded_k
\]
is an isomorphism of $\ded_k$-algebras, there exists $\alpha\in\ded_k$ such that
\[
x=[\alpha]^{F_q}(\overline{\mathbf{1}}).
\]
Therefore
\begin{align*}
N_{Co}^{F_q}(x)&=N_{Co}^{F_q}\left([\alpha]^{F_q}(\overline{\mathbf{1}})\right)\\
&=[\alpha]^{F_q}\left(N_{Co}^{F_q}(\overline{\mathbf{1}})\right) \\
&=[\alpha]^{F_q}\left(\left[\frac{q}{\pi}\right]^{F_q}(\overline{\mathbf{1}})\right).
\end{align*}
In particular,
\[
N_{Co}^{F_q}(x)\in W_{\langle q\rangle}^{F_q}(\kappa_k),
\]
so the $q$-typical factor is preserved. Applying $\theta$, we obtain
\[
\theta\bigl(N_{Co}^{F_q}(x)\bigr)=\alpha\frac{q}{\pi}=\frac{q}{\pi}\theta(x).
\]
This proves the assertion.
\end{proof}

\end{document}